\def\rmd{\textup{d}}
\def\rmT{\textup{T}}
\def\RR{\mathbb{R}}
\def\SS{\mathbb{S}}
\def\ZZ{\mathbb{Z}}
\def\CCC{\mathcal{C}}
\def\SSS{\mathcal{S}}
\def\PPP{\mathcal{P}}
\def\bfb{{\boldsymbol{b}}}
\def\bfc{{\boldsymbol{c}}}
\def\bff{{\boldsymbol{f}}}
\def\bfn{{\boldsymbol{n}}}
\def\bft{{\boldsymbol{t}}}
\def\bfx{{\boldsymbol{x}}}
\def\bfy{{\boldsymbol{y}}}
\def\bfF{{\boldsymbol{F}}}
\def\rmI{\mathrm{I}}
\def\rmII{\mathrm{II}}
\def\rmIII{\mathrm{III}}
\def\bfI{{\boldsymbol{I}}}
\def\bfM{{\boldsymbol{M}}}
\def\bfN{{\boldsymbol{N}}}
\def\bfQ{{\boldsymbol{Q}}}
\def\tbfc{\tilde{\boldsymbol{c}}}
\def\tr{\tilde{r}}
\newtheorem{theorem}{Theorem}
\newtheorem{corollary}[theorem]{Corollary}
\newtheorem{proposition}[theorem]{Proposition}
\newtheorem{lemma}[theorem]{Lemma}
\theoremstyle{definition}
\newtheorem{example}{Example}
\newtheorem{remark}{Remark}
\newcommand{\tagarray}{%
\mbox{}\refstepcounter{equation}%
$(\theequation)$%
}
\begin{document}

\begin{frontmatter}
\title{Symmetries of Canal Surfaces and Dupin Cyclides}


\author[a]{Juan Gerardo Alc\'azar\fnref{proy}}
\ead{juange.alcazar@uah.es}
\author[b]{Heidi E. I. Dahl}
\ead{Heidi.Dahl@sintef.no}
\author[b]{Georg Muntingh}
\ead{Georg.Muntingh@sintef.no}

\address[a]{Departamento de F\'{\i}sica y Matem\'aticas, Universidad de Alcal\'a,
E-28871 Madrid, Spain}
\address[b]{SINTEF ICT, PO Box 124 Blindern, 0314 Oslo, Norway}

\fntext[proy]{Supported by the Spanish ``Ministerio de
Ciencia e Innovacion" under the Project MTM2014-54141-P.
Member of the Research Group {\sc asynacs} (Ref. {\sc ccee2011/r34}) }


\begin{abstract}
We develop a characterization for the existence of symmetries of canal surfaces defined by a rational spine curve and rational radius function. In turn, this characterization inspires an algorithm for computing the symmetries of such canal surfaces. For Dupin cyclides in canonical form, we apply the characterization to derive an intrinsic description of their symmetries and symmetry groups, which gives rise to a method for computing the symmetries of a Dupin cyclide not necessarily in canonical form. As a final application, we discuss the construction of patches and blends of rational canal surfaces with a prescribed symmetry.
\end{abstract}

\end{frontmatter}

\section{Introduction}

Symmetry is a feature commonly encountered in nature and in manufactured items. From an aesthetic point of view, it is usually associated with the notions of beauty and proportion. But in real objects it is often desirable because of physical reasons, too. As a consequence, Geometric Modeling employs many symmetric shapes as their building blocks, like planes, certain quadrics, surfaces of revolution or cylindrical surfaces, exhibiting different types of symmetry.

Canal surfaces, and Dupin cyclides as a distinguished subfamily, form a class of surfaces also used in Geometric Modeling. Canal surfaces are envelopes of 1-parameter families of spheres, whose radii are parametrized by a radius function and whose centers form a parametric spine curve. In fact, a canal surface is completely determined by this spine curve and radius function. These surfaces have been studied extensively during the last 20 years \cite{Cho1, Cho2, Dahl14, Dahl2014, Jia, Krasauskas07, Peternell.Pottman97, VL16} because of their applications in Computer Aided Geometric Design in general and, in particular, in operations like joining and blending \cite{Dahl14, Dahl2014, Krasauskas07}.

Dupin cyclides form a special family of rational canal surfaces, and they have also received much attention since their introduction \cite{Dupin}. In this case, the source of interest is two-fold: from a theoretical point of view, due to the fact that they are the only surfaces that are canal surfaces in more than one way \cite{Maxwell} and their remarkable geometric properties \cite{Chandru}; from a practical point of view, because of their applicability in joining and blending \cite{Dutta}. 

However, a generic canal surface is not necessarily symmetric. In this paper, we provide a characterization of symmetric canal surfaces, under the assumption that the spine curve and the radius function are rational. A similar result is provided for Dupin cyclides. Although our original motivation is theoretical, we have applied our results to constructing rational patches of canal surfaces with prescribed symmetries, and also to designing rational blends of canal surfaces with prescribed symmetries in certain cases. These two operations are useful in Computer Aided Design, when a graphical model must, for aesthetic or functional reasons, satisfy certain symmetries.

In order to conduct our study, we make use of ideas regarding symmetries and similarities of rational curves developed by two of the authors \cite{Alcazar.Hermoso.Muntingh14, Alcazar.Hermoso.Muntingh14b, Alcazar.Hermoso.Muntingh15, Alcazar.Hermoso.Muntingh16, Alcazar.Hermoso.Muntingh16a}. In particular, we relate the symmetries of the canal surface to symmetries of the spine curves, and to isometries (if any) between \emph{different} spine curves, in the case of Dupin cyclides. 

When applied to Dupin cyclides, our results provide a classification of the symmetries of these surfaces, together with their symmetry groups. Certainly, some results regarding the symmetries of Dupin cyclides are already known. For instance, it is well known --- and easy to see from the implicit equations of Dupin cyclides in canonical form --- that these surfaces are symmetric with respect to the planes containing the spine curves, and therefore with respect to the line intersecting these planes as well. Nevertheless, it is not always easy to systematically derive a classification of all symmetries of a surface from its implicit equation. As an illustration of our characterisation of symmetric canal surfaces and Dupin cyclides, we prove that in the generic situation the aforementioned symmetries are the only symmetries exhibited by Dupin cyclides. Furthermore, we identify the special cases where extra symmetries appear and then determine these symmetries, thus leading to a complete classification. 

The computations in this paper have been implemented in a \texttt{Sage} worksheet, which can be tried out online following a link on the website of the last author \cite{WebsiteGeorg}. 

\section{Preliminaries on canal surfaces, Dupin cyclides and symmetries} \label{sec:prelim}
\subsection{Background on canal surfaces}\label{sec:background}
A \emph{canal surface} $\SSS = \SSS_{\bfc,r} \subset \RR^3$ is the envelope of a 1-parameter family $\Sigma = \Sigma_{\bfc, r}$ of spheres, centered at a \emph{spine curve} $\bfc$ and with \emph{radius function} $r$. The defining equations for $\SSS$ are
\begin{subequations}\label{eq:Sigma1SigmaDot1}
\begin{align}
    \Sigma_{\bfc,r}(t) &: \|\bfx - \bfc(t)\|^2 - r^2(t) = 0,\label{eq:Sigma1}\\
\dot\Sigma_{\bfc,r}(t) &: \langle\bfx - \bfc(t), \dot\bfc(t)\rangle + r(t)\dot r(t) = 0.\label{eq:SigmaDot1}
\end{align}
\end{subequations}
For fixed $t$, the first equation is a sphere and the second is a plane, intersecting in the \emph{characteristic circle} $k(t) = k_{\bfc,r}(t) := \Sigma_{\bfc,r}(t)\cap\dot\Sigma_{\bfc,r}(t)$; see Figure \ref{fig:newcanal}.

It follows from \eqref{eq:Sigma1SigmaDot1} that $k(t)$ is nonempty (over the real numbers) precisely when $\|\dot\bfc(t)\|^2 \geq \dot r(t)^2$, degenerating to a single point if equality holds at $t$, and to a curve plus the 1-parameter family of tangent planes $\dot\Sigma(t)$ if equality holds in a nonempty interval \cite[\S 2]{Peternell.Pottman97}. To exclude these degenerate cases, we assume that $\|\dot\bfc(t)\|^2 > \dot r(t)^2$ holds for all parameters $t$ for which $\bfc(t)$ and $r(t)$ are defined, guaranteeing that \eqref{eq:Sigma1SigmaDot1} defines an irreducible real surface $\SSS$. For these statements and other general results on canal surfaces that we recall in this section, we refer the interested reader to \cite{Peternell.Pottman97, Dahl14, Dahl2014, Krasauskas07}.

\begin{figure}
\begin{center}
\includegraphics[scale=0.2]{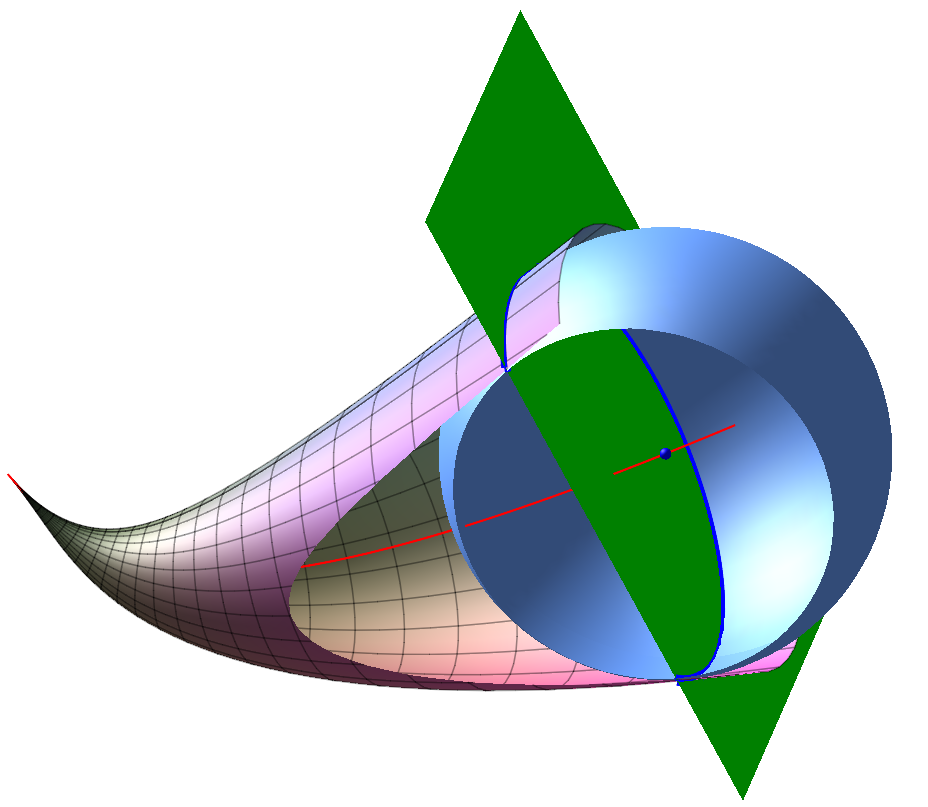}
\caption{A piece of a canal surface and a corresponding spine curve segment, with, for a given parameter $t_0$, the plane $\dot\Sigma(t_0)$ and the sphere $\Sigma(t_0)$ with center $\bfc(t_0)$ and radius $r(t_0)$, intersecting in the characteristic circle $k(t_0)$.}\label{fig:newcanal}
\end{center}
\end{figure}

\begin{remark}\label{rem:Minkowski}
The family $\Sigma$ of spheres can be identified with a curve in the $4$-dimensional Minkowski space $\RR^{3,1}$, where the point $\big(\bfc(t); r(t)\big)$ corresponds to a sphere $\Sigma(t)$ with center $\bfc(t)$ and radius $r(t)$. The sign of the radius gives us the \emph{orientation} of the sphere (towards the centre when $r(t) > 0$, and outwards when $r(t) < 0$), which is inherited by the canal surface. Thus an \emph{oriented} canal surface corresponds to a curve $\big(\bfc; r\big)$ in $\RR^{3,1}$. 
\end{remark}

The canal surface $\SSS$ admits parametrizations of the form
\begin{equation}\label{eq:CanalSurfaceParametrization}
\bfF (t,s) = \bfF_{\bfc,r} (t,s) := \bfc(t) + r(t) \bfN(t,s).
\end{equation}
Note that while \eqref{eq:Sigma1SigmaDot1} remains valid upon replacing $r$ by $-r$, the change in sign of the radius results in a change of orientation of the canal surface, i.e., a reversal of the direction of the unit normal vector field described by $\bfN (t, s)$. Thus the sign of the second term of the parametrization in \eqref{eq:CanalSurfaceParametrization} remains positive. 

Replacing $\bfx$ in \eqref{eq:Sigma1SigmaDot1} by \eqref{eq:CanalSurfaceParametrization} and factoring out $r(t)$ yields
\begin{subequations}\label{eq:Sigma2SigmaDot2}
\begin{align}
\|\bfN (t, s)\|^2 = 1,\label{eq:Sigma2}\\
\langle \bfN(t,s), \dot\bfc(t)\rangle + \dot r(t) = 0.\label{eq:SigmaDot2}
\end{align}
\end{subequations}
In fact, $\bfN (t, s)$ describes the unit normal vector field of $\SSS$. Also, for each characteristic circle $k(t)$, the normal lines to $\SSS$ along the circle intersect at the point $\bfc(t)$ on the spine curve. Moreover, the derivative $\dot \bfc(t)$ at this point is normal to the plane containing the characteristic circle.  

If $r$ is a constant, then we have a special type of canal surface, called a \emph{pipe} surface. Furthermore, if $\bfc$ is a line, then $\SSS$ is a surface of revolution. 

In this paper we assume that $\bfc$ and $r$ are real and rational and known. Additionally, we assume that $\bfc$ is \emph{proper}, i.e., birational or equivalently injective except for perhaps finitely many parameter values. Since $\bfc$ and $r$ are rational, finding a rational parametrization of type \eqref{eq:CanalSurfaceParametrization} reduces to finding a rational parametrization of $\bfN$. This was first accomplished in \cite{Peternell.Pottman97}, and minimal degree rational parametrizations were obtained in \cite{Krasauskas07}. As a result, the surface $\SSS$ is also rational, and therefore irreducible. Observe that the spine curve of $\SSS$ is irreducible as well, since it is rationally parametrized by $\bfc$.

\subsection{Background on Dupin cyclides}\label{sec:background2}
One can wonder if $\bfc$ and $r$ are unique for a given canal surface $\SSS$, or if there are surfaces that are canal surfaces in at least two different ways. This question was answered by Maxwell \cite{Maxwell}, who showed that Dupin cyclides are the only canal surfaces with the latter property. In fact, these surfaces are the envelope of exactly \emph{two} different 1-parameter families of spheres with distinct spine curves $\bfc_1, \bfc_2$ and radius functions $r_1, r_2$. Dupin \cite{Dupin, Hilbert.Cohn-Vossen52} originally defined \emph{cyclides}, now called \emph{Dupin cyclides}, as surfaces whose lines of curvatures are circles. Since then, a variety of alternative definitions has arisen \cite{Chandru}, which is the underlying reason for their value in a variety of applications.

Maxwell \cite{Maxwell} also showed that these spine curves must be conics lying in perpendicular planes and passing through each other's foci, yielding three different cases corresponding to the nature of the spine curves. For each of these cases one can provide \cite[\S 23.2]{Degen02} a canonical description of the spine curves, radius functions and implicit forms, shown in Table \ref{tab:canonicalform}. This form depends on certain parameters $a,b,c,f,g$ satisfying $f^2 = a^2 - b^2$. Notice that $a,c\neq 0$ in Type I, $a,b\neq 0$ and $f>0$ in Type II, and $g\neq 0$ in Type III. Note also that Type II degenerates to Type I in the limit $f\to 0$. We say that a Dupin cyclide is in \emph{canonical form}, if it is the zeroset of $F^\rmI$, $F^\rmII$ or $F^\rmIII$ in Table \ref{tab:canonicalform} for certain parameters $a,b,c,f,g$ satisfying the above constraints.

As Dupin cyclides have rational spine curves and radius functions, eliminating the parameter $t$ from \eqref{eq:Sigma1SigmaDot1} yields a description as the zeroset of a polynomial. The implicit forms in Table \ref{tab:canonicalform} show that the Dupin cyclides of Types I and II are quartics, while the Dupin cyclides of Type III are cubics. Furthermore, the Dupin cyclides of Type I are tori, and these surfaces are surfaces of revolution, since the characteristic circles are rotationally invariant about the axis traced out by $\bfc_2$.

For the Dupin cyclides in this paper we will assume that the spine curves $\bfc_1,\bfc_2$ and the radius functions $r_1,r_2$ are known, yielding alternative representations as in \eqref{eq:Sigma1SigmaDot1} for the same canal surface $\SSS_{\bfc_1,r_1} = \SSS_{\bfc_2,r_2}$.

\begin{table}
\begin{tabular*}{\columnwidth}{@{\extracolsep{\stretch{1}}}*{3}{l}}
\toprule
I: Quartic & spine curve: circle/line & radius function\\
\multirow{2}{*}{\includegraphics[width=6em]{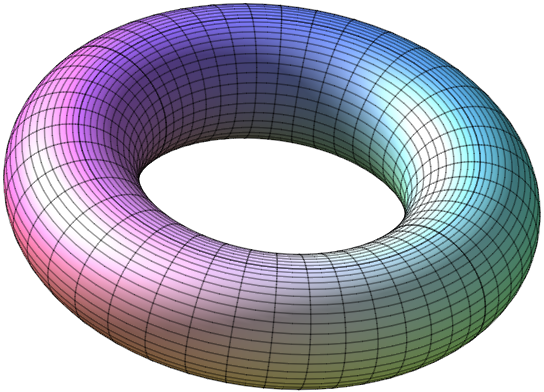}} & $\displaystyle{\bfc^\rmI_1(t)=a\left(\frac{1-t^2}{1+t^2},\frac{2t}{1+t^2},0\right)}$ & $r^\rmI_1(t)=c$\\ \vspace{0.5em}
 & $\displaystyle{\bfc^\rmI_2(t)=a\left(0,0,\frac{2t}{1-t^2}\right)}$ & $\displaystyle{r^\rmI_2(t)=c-a\frac{1+t^2}{1-t^2}}$\\
\multicolumn{3}{c}{$\displaystyle{0 = F^\rmI := (x^2+y^2+z^2+a^2-c^2)^2-4a^2(x^2+y^2)}$}\hfill \tagarray\label{eq:I}\\
\midrule
II: Quartic & spine curve: ellipse/hyperbola & radius function\\
\multirow{2}{*}{\includegraphics[width=6em]{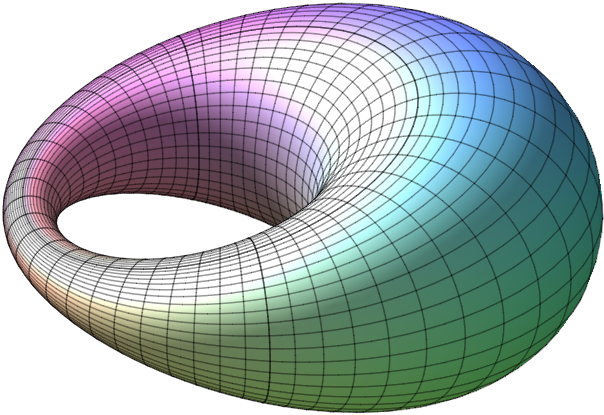}} & $\displaystyle{\bfc^\rmII_1(t)=\left(\frac{1-t^2}{1+t^2}a,\frac{2t}{1+t^2}b,0\right)}$ & $\displaystyle{r^\rmII_1(t)=c-f\frac{1-t^2}{1+t^2}}$ \\ \vspace{0.5em}
 & $\displaystyle{\bfc^\rmII_2(t)=\left(\frac{1+t^2}{1-t^2}f,0,\frac{2t}{1-t^2}b\right)}$ & $\displaystyle{r^\rmII_2(t)=c-a\frac{1+t^2}{1-t^2}}$\\
\multicolumn{3}{c}{$\displaystyle{0 = F^\rmII := (x^2+y^2+z^2+a^2-f^2-c^2)^2-4(ax-cf)^2-4y^2(a^2-f^2)}$} \hfill \tagarray\label{eq:II}\\
\midrule
III: Cubic & spine curve: parabola/parabola & radius function\\
\multirow{2}{*}{\includegraphics[width=6em]{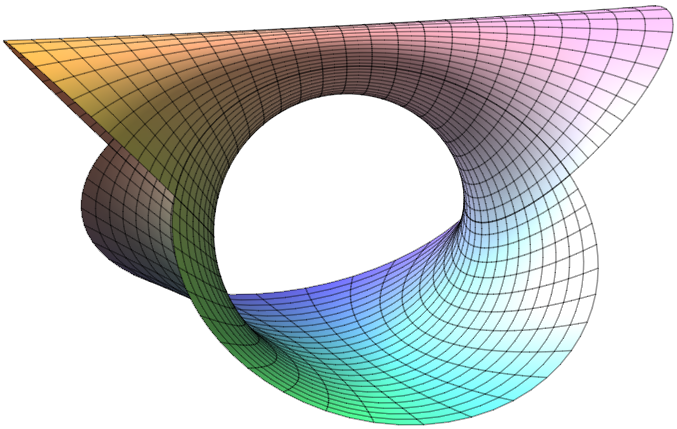}} & $\displaystyle{\bfc^\rmIII_1(t)= g\left(t^2-\frac{1}{2},2t,0\right)}$ & $\displaystyle{r^\rmIII_1(t)=c+g\left(t^2+\frac{1}{2}\right)}$\\ \vspace{0.5em}
 & $\displaystyle{\bfc^\rmIII_2(t)=g\left(\frac{1}{2}-t^2,0,2t\right)}$ & $\displaystyle{r^\rmIII_2(t)=c-g\left(t^2+\frac{1}{2}\right)}$\\
\multicolumn{3}{c}{$\displaystyle{0 = F^\rmIII := (x+c)(x^2+y^2+z^2)+(y^2-z^2)g-(g^2+c^2)x+(g^2-c^2)c}$} \hfill \tagarray\label{eq:III}\\
\bottomrule
\end{tabular*}
\caption{Parametric and corresponding implicit canonical forms for Dupin cyclides.}\label{tab:canonicalform}
\end{table}

\subsection{Symmetries of curves and surfaces}
An \emph{(affine) isometry $\bff$} of $\RR^n$ takes the form $\bff(\bfx) = \bfQ \bfx + \bfb$, with $\bfb\in \RR^n$ a vector and $\bfQ \in \RR^{n\times n}$ an orthogonal matrix, i.e., $\bfQ\bfQ^\rmT = \bfI$. If $\bff$ leaves a curve $\CCC\subset \RR^3$ (respectively a surface $\SSS\subset \RR^3$) invariant, in the sense that $\bff(\CCC) = \CCC$ (respectively $\bff(\SSS) = \SSS$), then $\bff$ is called a \emph{symmetry} of $\CCC$ (respectively $\SSS$). The identity map $\bff = \text{id}_{\RR^n}$ is referred to as the \emph{trivial} isometry/symmetry. A curve or surface is called \emph{symmetric} if it has a nontrivial symmetry.

The nontrivial isometries include reflections in a plane (mirror symmetries), rotations about an axis (axial symmetries), and translations, and these combine in commutative pairs to form twists, glide reflections, and rotatory reflections. A composition of three reflections in mutually perpendicular planes through a point yields a \emph{central symmetry} with respect to this point. The particular case of rotation by an angle $\pi$ is of special interest, and it is called a \emph{half-turn}. For a description of the types of nontrivial symmetries of Euclidean space, see \cite{Coxeter69} and \cite[\S 2]{Alcazar.Hermoso.Muntingh15}.

It is well known that the birational functions on the line are the \emph{M\"obius transformations} \cite{Sendra.Winkler.Perez-Diaz}, i.e., rational functions
\begin{equation}\label{eq:Moebius}
\varphi: \RR \dashrightarrow \RR, \qquad \varphi(t) = \frac{\alpha t + \beta}{\gamma t + \delta}, \qquad \alpha \delta - \beta \gamma \neq 0.
\end{equation}
The identity map $\varphi = \text{id}_\RR$ is referred to as the \emph{trivial} M\"obius transformation.

The following result is provided for rational plane and space curves in \cite{Alcazar.Hermoso.Muntingh14}, but the proof extends to any dimension, and in fact to the real analytic setting.

\begin{theorem}\label{thm:diagram}
Let $\CCC\subset \RR^n$ be a curve defined by a proper parametrization $\bfc: \RR\dashrightarrow \CCC\subset \RR^n$. The curve~$\CCC$ is symmetric if and only if there exists a nontrivial isometry $\bff$ and nontrivial M\"obius transformation $\varphi$ for which the diagram
\begin{equation}\label{eq:fundamentaldiagram}
\xymatrix{
\CCC \ar[r]^{\bff} & \CCC \\
\RR \ar@{-->}[u]^{\bfc} \ar@{-->}[r]_{\varphi} & \RR \ar@{-->}[u]_{\bfc}
}
\end{equation}
commutes.
\end{theorem}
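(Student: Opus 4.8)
The plan is to prove the two implications separately, using the properness of $\bfc$ as the bridge between the parametric and geometric pictures.

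\medskip

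\noindent\textbf{($\Leftarrow$)} Suppose such an $\bff$ and $\varphi$ exist making \eqref{eq:fundamentaldiagram} commute, i.e.\ $\bff\circ\bfc = \bfc\circ\varphi$ as partial maps $\RR\dashrightarrow\RR^n$. First I would observe that the image of $\bfc$ is dense in $\CCC$ (the domain of $\bfc$ is a cofinite subset of $\RR$ and $\CCC$ is the Zariski/analytic closure of the image, or in the real-analytic setting the image of a connected open set is dense in the curve). Then $\bff(\CCC) = \bff(\overline{\bfc(\RR)}) = \overline{\bff(\bfc(\RR))} = \overline{\bfc(\varphi(\RR))}$, using continuity of $\bff$. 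Since $\varphi$ is a M\"obius transformation it is a bijection of $\RR$ up to finitely many points, so $\varphi(\RR)$ is again cofinite in $\RR$ and $\overline{\bfc(\varphi(\RR))} = \overline{\bfc(\RR)} = \CCC$. Hence $\bff(\CCC) = \CCC$, and since $\bff$ is nontrivial, $\CCC$ is symmetric.

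\medskip

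\noindent\textbf{($\Rightarrow$)} Conversely, suppose $\bff$ is a nontrivial symmetry of $\CCC$. Then $\bff\circ\bfc$ is again a parametrization of $\CCC$ (its image lies in $\bff(\CCC)=\CCC$ and is dense). The key point is that $\bfc$ is \emph{proper}, i.e.\ birational onto $\CCC$; therefore $\bfc$ admits a rational (birational) inverse $\bfc^{-1}\colon \CCC\dashrightarrow\RR$, defined outside a finite set. Set $\varphi := \bfc^{-1}\circ\bff\circ\bfc\colon \RR\dashrightarrow\RR$. This is a composition of (partial) birational maps, hence birational $\RR\dashrightarrow\RR$; but the birational self-maps of the line are precisely the M\"obius transformations \eqref{eq:Moebius}, so $\varphi$ is a M\"obius transformation. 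By construction $\bfc\circ\varphi = \bfc\circ\bfc^{-1}\circ\bff\circ\bfc = \bff\circ\bfc$ wherever all maps are defined, which is exactly the commutativity of \eqref{eq:fundamentaldiagram}. Finally I must check $\varphi$ is nontrivial: if $\varphi = \mathrm{id}_\RR$, then $\bff\circ\bfc = \bfc$ on a cofinite set, so $\bff$ fixes a dense subset of $\CCC$, and by continuity $\bff|_\CCC = \mathrm{id}_\CCC$; since an isometry of $\RR^n$ is determined by its action on any subset containing $n+1$ affinely independent points (and $\CCC$, being a curve, is not contained in a hyperplane unless it is planar — in which case one argues within the affine span, or simply notes a non-degenerate curve spans enough points), this forces $\bff = \mathrm{id}_{\RR^n}$, contradicting nontriviality.

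\medskip

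\noindent\textbf{Main obstacle.} The routine directions are the density/closure arguments; the one point deserving care is the last step of ($\Rightarrow$), ensuring that an isometry fixing $\CCC$ pointwise is the identity on all of $\RR^n$. This needs $\CCC$ to affinely span $\RR^n$ (equivalently, not to lie in a proper affine subspace); if it does lie in such a subspace, the statement should be read relative to that subspace, or one notes that the symmetry group is then only determined up to isometries fixing the affine hull, which does not affect the biconditional as stated. I would mention that this is the content referred to by the phrase ``the proof extends to any dimension, and in fact to the real analytic setting'' in \cite{Alcazar.Hermoso.Muntingh14}, the only genuinely new ingredient over dimension $2$ or $3$ being bookkeeping of affine spans.
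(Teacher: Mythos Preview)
The paper does not supply its own proof of Theorem~\ref{thm:diagram}; it merely cites \cite{Alcazar.Hermoso.Muntingh14} and remarks that the argument there extends to arbitrary~$n$. Your argument is the natural one and matches what that reference does: for $(\Rightarrow)$, use properness of $\bfc$ to obtain a birational inverse and set $\varphi:=\bfc^{-1}\circ\bff\circ\bfc$, then invoke L\"uroth/the classification of birational self-maps of the line to conclude $\varphi$ is M\"obius. The $(\Leftarrow)$ direction via density is standard.

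Your ``main obstacle'' is well spotted and is in fact a genuine subtlety of the theorem \emph{as literally stated}, not a defect of your proof. If $\CCC$ lies in a proper affine subspace $A\subsetneq\RR^n$, an isometry fixing $A$ pointwise is a nontrivial symmetry of $\CCC$ whose associated $\varphi$ is the identity; if $\CCC$ has no further symmetries within $A$, the forward implication fails verbatim. The way the paper actually \emph{uses} the result (see the proof of Theorem~\ref{thm:singlespine}: ``By Theorem~\ref{thm:diagram} this is equivalent to the existence of a M\"obius transformation $\varphi$ for which $\bff\circ\bfc=\bfc\circ\varphi$'') is the per-symmetry version: every symmetry $\bff$ of $\CCC$ has an associated M\"obius $\varphi$, with $\varphi$ trivial exactly when $\bff|_\CCC=\mathrm{id}_\CCC$. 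Your construction proves precisely this stronger, cleaner statement. So rather than trying to squeeze $\bff=\mathrm{id}_{\RR^n}$ out of $\bff|_\CCC=\mathrm{id}_\CCC$ (which, as you note, requires $\CCC$ to affinely span $\RR^n$), simply record the per-symmetry correspondence; that is what the rest of the paper needs.
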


We will say that the M\"obius transformation in Theorem \ref{thm:diagram} is \emph{associated} with the isometry $\bff$. Theorem \ref{thm:diagram} is used in \cite{Alcazar.Hermoso.Muntingh15} to reduce the computation of the symmetries of rational space curves to the computation of their associated M\"obius transformations, using the classical differential invariants (curvature and torsion) of space curves.

Suppose $\CCC$ is a rational space curve that is \emph{non-linear}, i.e., not a straight line. Then, for all but finitely many parameters $t$, the \emph{unit tangent}, \emph{principal normal}, and \emph{bi-normal} vectors of $\bfc$ at $t$,
\[
\bft_\bfc = \bft_\bfc(t) := \frac{\dot\bfc(t)}{\|\dot\bfc(t)\|},\quad
\bfn_\bfc = \bfn_\bfc(t) := \frac{\dot\bft_\bfc(t)}{\|\dot\bft_\bfc(t)\|}, \quad
\bfb_\bfc = \bfb_\bfc(t) := \bft_\bfc(t) \times \bfn_\bfc(t)
\]
are well defined and together form the \emph{Frenet frame} $(\bft_\bfc, \bfn_\bfc,\bfb_\bfc)$ of the curve at~$t$.

\begin{lemma}\label{lem:FrenetIsometry}
Let $\bff(\bfx) = \bfQ\bfx +\bfb$ be an isometry of $\RR^3$ and $\bfc$ a parametrized curve with Frenet frame $(\bft_\bfc, \bfn_\bfc, \bfb_\bfc)$. Then the curve $\bff\circ \bfc$ has Frenet frame
\[ (\bft_{\bff\circ \bfc}, \bfn_{\bff\circ \bfc}, \bfb_{\bff\circ \bfc}) = (\bfQ \bft_\bfc, \bfQ \bfn_\bfc, \det(\bfQ)\bfQ \bfb_\bfc).\]
\end{lemma}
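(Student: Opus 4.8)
The plan is to establish the statement by a direct local computation, differentiating the composition $\bff\circ\bfc$ and repeatedly invoking two elementary properties of an orthogonal matrix $\bfQ$: that it preserves norms, $\|\bfQ\bfu\| = \|\bfu\|$ for every $\bfu\in\RR^3$, and that it interacts with the cross product through the identity $(\bfQ\bfu)\times(\bfQ\bfw) = \det(\bfQ)\,\bfQ(\bfu\times\bfw)$ for all $\bfu,\bfw\in\RR^3$. Both of these hold pointwise, so the argument needs no global hypothesis on $\bfc$ beyond the existence of the Frenet frame at the parameter $t$ under consideration.

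First I would set $\bfe := \bff\circ\bfc$, so that $\bfe(t) = \bfQ\bfc(t) + \bfb$, and note that since $\bfQ$ and $\bfb$ are constant, differentiation commutes with applying $\bfQ$: $\dot\bfe = \bfQ\dot\bfc$, and likewise one derivative higher. Norm preservation gives $\|\dot\bfe\| = \|\dot\bfc\|$, hence $\bft_\bfe = \dot\bfe/\|\dot\bfe\| = \bfQ\bft_\bfc$, and in particular $\bft_\bfe$ is defined for exactly the parameters where $\bft_\bfc$ is. Differentiating the identity $\bft_\bfe = \bfQ\bft_\bfc$ and applying norm preservation once more yields, in the same way, $\bfn_\bfe = \dot\bft_\bfe/\|\dot\bft_\bfe\| = \bfQ\bfn_\bfc$. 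Finally the binormal follows from the cross-product identity,
\[
\bfb_\bfe = \bft_\bfe\times\bfn_\bfe = (\bfQ\bft_\bfc)\times(\bfQ\bfn_\bfc) = \det(\bfQ)\,\bfQ(\bft_\bfc\times\bfn_\bfc) = \det(\bfQ)\,\bfQ\bfb_\bfc,
\]
which is the assertion.

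There is no real obstacle in this argument; the only point that merits a word of justification is the identity $(\bfQ\bfu)\times(\bfQ\bfw) = \det(\bfQ)\,\bfQ(\bfu\times\bfw)$, which can be verified on the standard basis vectors, or derived from the scalar triple product via $\langle\bfr,(\bfQ\bfu)\times(\bfQ\bfw)\rangle = \det[\bfQ^{\rmT}\bfr \mid \bfu \mid \bfw]\det(\bfQ)$ for all $\bfr$. It is worth observing that for a proper isometry ($\det\bfQ = 1$) the whole Frenet frame is carried along by $\bfQ$, whereas for an improper one ($\det\bfQ = -1$) the binormal, and hence the torsion, changes sign; this is precisely the orientation sensitivity of the torsion that will be relevant when the lemma is used in the later symmetry analysis.
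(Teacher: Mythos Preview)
Your proof is correct and follows essentially the same approach as the paper's own proof: a direct computation of each Frenet vector using that $\bfQ$ commutes with differentiation, preserves norms, and satisfies the cross-product identity. The only cosmetic difference is that the paper invokes the more general identity $(\bfM\bfx)\times(\bfM\bfy) = \det(\bfM)\,\bfM^{-\rmT}(\bfx\times\bfy)$ for invertible $\bfM$ and then specializes to orthogonal $\bfQ$, whereas you state the orthogonal case directly.
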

\begin{proof}
One has
\begin{align*}
\bft_{\bff\circ \bfc} & = \left. \frac{\rmd \bff\circ \bfc}{\rmd t}  \right/ \left\|\frac{\rmd \bff\circ \bfc}{\rmd t}\right\| = \left.\bfQ \frac{\rmd \bfc}{\rmd t}\right/\left\|\bfQ \frac{\rmd \bfc}{\rmd t}\right\| = \bfQ \bft_\bfc,\\
\bfn_{\bff\circ \bfc} & = \left. \frac{\rmd \bft_{\bff\circ \bfc} }{\rmd t} \right/ \left\| \frac{\rmd \bft_{\bff\circ \bfc} }{\rmd t} \right\|
                     = \left. \bfQ \frac{\rmd \bft_\bfc }{\rmd t} \right/ \left\|\bfQ \frac{\rmd \bft_\bfc }{\rmd t} \right\| = \bfQ \bfn_\bfc,\\
\bfb_{\bff\circ \bfc} & = \bft_{\bff\circ \bfc} \times \bfn_{\bff\circ \bfc} = (\bfQ \bft_\bfc)\times (\bfQ \bfn_\bfc) = \det(\bfQ) \bfQ (\bft_\bfc \times \bfn_\bfc) = \det(\bfQ) \bfQ \bfb_\bfc,
\end{align*}
where we used the identity
\begin{equation}\label{eq:cross_product_orthogonal}
(\bfM \bfx) \times (\bfM\bfy) = \det\bfM\cdot \bfM^{-\rmT} (\bfx\times \bfy),
\end{equation}
which holds for any invertible matrix $\bfM\in \RR^{3\times 3}$ and vectors $\bfx, \bfy\in \RR^3$.
\end{proof}

The following result on real rational functions is necessary for establishing the characterization of symmetries of canal surfaces stated in Theorem \ref{thm:singlespine}. Since we were unable to find an appropriate reference, we include a short proof.

\begin{lemma} \label{ratfunc}
Let $g,h$ be two real rational functions. If $|g(t)|=|h(t)|$ holds for every $t\in \RR$ for which both sides are defined, then either $g = h$ or $g=-h$.
\end{lemma}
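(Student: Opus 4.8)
The plan is to reduce the statement about absolute values to a statement about squares, and then invoke the fact that a rational function vanishing on a set with an accumulation point is identically zero. First I would observe that the hypothesis $|g(t)| = |h(t)|$ implies $g(t)^2 = h(t)^2$, hence $g(t)^2 - h(t)^2 = 0$, for every $t$ in the (cofinite, hence infinite) set $D \subseteq \RR$ where both $g$ and $h$ are defined. Since $g^2 - h^2$ is a rational function vanishing on an infinite set, its numerator is a polynomial with infinitely many roots, so $g^2 - h^2 = 0$ identically as rational functions. Factoring gives $(g - h)(g + h) = 0$ in the field $\RR(t)$, which is an integral domain, so either $g - h = 0$ or $g + h = 0$; that is, $g = h$ or $g = -h$.

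The only subtlety worth spelling out is that $g = h$ and $g = -h$ as rational functions is exactly the conclusion we want, since two rational functions that agree as elements of $\RR(t)$ agree at every point where both are defined. I would also note explicitly why $D$ is infinite: $g$ and $h$ are each defined outside the finite zero sets of their respective denominators, so $D$ is the complement of a finite set in $\RR$, and in particular $D$ is infinite (indeed cofinite). This guarantees the numerator of $g^2 - h^2$, a genuine polynomial, has infinitely many zeros and is therefore the zero polynomial.

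I do not anticipate a genuine obstacle here; the argument is elementary once one passes from $|g| = |h|$ to $g^2 = h^2$. The only place to be slightly careful is to phrase the vanishing-on-an-infinite-set argument at the level of the numerator polynomial rather than the rational function itself, to avoid circularity about where the rational function is "defined." With that caveat the proof is a few lines, which is consistent with the paper's remark that a short proof suffices.
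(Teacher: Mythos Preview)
Your proof is correct and, if anything, slightly cleaner than the paper's. The paper argues differently: it selects an open interval $I$ on which both $g$ and $h$ are defined and have constant sign, so that on $I$ the equality $|g|=|h|$ becomes either $g-h=0$ or $g+h=0$ identically; it then invokes the Identity Theorem for analytic functions to extend this equality to all of $\RR$. Your route---squaring to obtain $g^2-h^2=0$ on a cofinite set, concluding that this holds identically in $\RR(t)$ because the numerator polynomial has infinitely many roots, and then factoring $(g-h)(g+h)=0$ in the integral domain $\RR(t)$---is purely algebraic and sidesteps both the sign-constancy argument and the appeal to analyticity. The paper's approach has the mild advantage that it would adapt to, say, real-analytic $g,h$ where there is no ambient field in which to factor; your approach has the advantage of being entirely self-contained within elementary algebra and of making the ``either/or'' conclusion arise from a single clean step (no zero divisors) rather than from a case analysis on signs.
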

\begin{proof}
Since $g,h$ are real rational functions, there exists an open interval $I\subset \RR$ where $g,h$ are both defined and have constant sign. Therefore, for any $t\in I$, $|g(t)| - |h(t)|$ is defined, and $g(t)-h(t) = 0$ or $g(t)+h(t) = 0$ holds identically for $t\in I$. Assume that $g(t) - h(t) = 0$ holds identically for $t\in I$; the second possibility is analogous. Since the restriction of $g - h$ to $I$ is rational and well defined, it is analytic on its domain $I$. Therefore, if this restriction is zero in $I$, the Identity Theorem implies that $g = h$.
\end{proof}

\section{Symmetries of canal surfaces with a unique spine curve:\\ characterization and algorithm}\label{sec:sym}
In this section we characterize the existence of symmetries of a canal surface $\SSS$ that is not a Dupin cyclide. As an application of this characterization, we develop an algorithm for computing these symmetries.

If $\SSS$ has a linear spine curve $\bfc$, then it is a surface of revolution. Surfaces of revolution can be detected by using the methods in \cite{AG16, VL15}, and their symmetries essentially follow from those of the directrix curve \cite[\S 2.2.4]{AHM15}. Hence from now on we will assume that $\bfc$ is non-linear.

In order to solve our problem (in this and the next section) we recall that whenever $\bfc$ is non-linear, the Frenet frame of $\bfc$ is well defined and forms the basis of the following convenient (but in general nonrational) parametrization of the surface normals \cite[Equation (3.12)]{Dahl14},
\begin{equation}\label{eq:NormalsFrenet}
\bfN_{\bfc,r}(t, s) = \frac{\dot r(t)}{\|\dot\bfc(t)\|} \bft_\bfc + \sqrt{1 - \left(\frac{\dot r(t)}{\|\dot\bfc(t)\|}\right)^2} \left(\frac{1-s^2}{1+s^2} \bfn_\bfc + \frac{2s}{1+s^2} \bfb_\bfc \right).
\end{equation}

\subsection{General lemmas}
In this section we consider the effect of a symmetry $\bff$ of the canal surface $\SSS_{\bfc,r}$ on its spine curve $\bfc$. Allowing for the case that $\SSS_{\bfc,r}$ is a Dupin cyclide, these results will be applied both in this section and in Section \ref{sec:dupin}. In particular, the following lemma shows that $\tilde{\bfc} := \bff\circ \bfc$ is also a spine curve of $\SSS$, and the subsequent lemmas describe its relation to $\bfc$.

\begin{lemma} \label{lemma-1}
Let $\bff$ be a symmetry of $\SSS_{\bfc,r}$. Then $\SSS_{\bff\circ \bfc,\tilde{r}} = \SSS_{\bfc,r}$, where either $\tilde{r}=r$ or $\tilde{r}=-r$. In particular, $\bff\circ \bfc$ is also a spine curve of $\SSS_{\bfc,r}$.
\end{lemma}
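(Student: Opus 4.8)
The key idea is that a symmetry $\bff$ of the surface $\SSS_{\bfc,r}$ must permute the characteristic circles $k(t)$, because these circles are intrinsically determined by the surface as the family of circles along which consecutive spheres of the defining family are tangent — equivalently, they are curvature lines of a particular type. Granting this, each characteristic circle $k_{\bfc,r}(t)$ is sent by $\bff$ to another characteristic circle $k_{\bfc,r}(\sigma(t))$ for some reparametrization $\sigma$, and since the center and (unsigned) radius of a circle in $\RR^3$ are recovered from the circle itself, applying $\bff$ — an isometry — to $k_{\bfc,r}(t)$ produces a circle with center $\bff(\bfc(t))$ and radius $|r(t)|$. Hence $\bff\circ\bfc$ together with a radius function $\tilde r$ satisfying $|\tilde r| = |r\circ\sigma^{-1}|$ describes the same surface.

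First I would make precise the claim that $\bff$ permutes the characteristic circles. One clean route: $\bff$ maps the oriented normal congruence of $\SSS$ to itself (up to reversal of orientation), and the characteristic circle $k(t)$ is characterized as the locus of points of $\SSS$ whose surface normals pass through a common point, namely $\bfc(t)$; this is the focal structure of the canal surface recalled after \eqref{eq:Sigma2SigmaDot2}. An isometry preserves normals and incidence, so it carries each such circle to another one, and it carries the focal point $\bfc(t)$ to the corresponding focal point. Thus there is a (birational, since everything in sight is rational and $\bfc$ is proper) map $\sigma:\RR\dashrightarrow\RR$ with $\bff(\bfc(t)) = \bfc(\sigma(t))$ and $\bff(k_{\bfc,r}(t)) = k_{\bfc,r}(\sigma(t))$ for all but finitely many $t$.

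Next I would extract the radius relation. Since $\bff$ is an isometry, the image $\bff(k_{\bfc,r}(t))$ is a circle of radius $r(t)$ in absolute value; but it equals $k_{\bfc,r}(\sigma(t))$, whose radius is $|r(\sigma(t))|$. So $|r(t)| = |r(\sigma(t))|$ for all $t$ in the common domain. Now set $\tilde\bfc := \bff\circ\bfc$; we have just seen $\tilde\bfc(t) = \bfc(\sigma(t))$, so $\tilde\bfc$ is a reparametrization of $\bfc$, hence a proper rational parametrization of the same spine curve. Define $\tilde r := r\circ\sigma$ — wait, we must be careful about orientation: the natural choice is $\tilde r(t) := r(\sigma(t))$ up to sign, and $|\tilde r| = |r|$ as rational functions by the displayed identity composed with $\sigma$. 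Applying Lemma \ref{ratfunc} to $\tilde r$ and $r$ (after noting both are genuinely rational, using that $\sigma$ is a M\"obius transformation so $r\circ\sigma$ is rational) yields $\tilde r = r$ or $\tilde r = -r$. Finally one checks that with this choice $\SSS_{\tilde\bfc,\tilde r}$ satisfies the defining equations \eqref{eq:Sigma1SigmaDot1}: reparametrizing $(\bfc,r)$ by $\sigma$ does not change the envelope, and then $\tilde\bfc(t) = \bfc(\sigma(t))$, $\tilde r(t) = \pm r(\sigma(t))$ give the same family of spheres (up to orientation, which does not affect the surface), so $\SSS_{\bff\circ\bfc,\tilde r} = \SSS_{\bfc,r}$.

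**Main obstacle.** The delicate point is the very first step — rigorously justifying that the symmetry $\bff$ must respect the foliation of $\SSS$ by characteristic circles, rather than mixing them up with other curves on the surface. This rests on an intrinsic (isometry-invariant) description of the characteristic circles: they are precisely the curvature lines of $\SSS$ along which one principal curvature is constant, equal to $\pm 1/r$ — equivalently, the fibers of the map sending a surface point to the focal point on the corresponding normal line. I would lean on the classical facts about canal surfaces recalled in Section \ref{sec:background} (normals along $k(t)$ meet at $\bfc(t)$) to phrase this cleanly; once the intrinsic characterization is in hand, isometry-invariance is immediate and the rest is bookkeeping with Lemma \ref{ratfunc}. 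A secondary subtlety is handling the finitely many exceptional parameters (where the Frenet frame or $\sigma$ degenerates, or $\bfc$ fails to be injective), which is dispatched by the usual density/Identity-Theorem argument since all objects involved are rational.
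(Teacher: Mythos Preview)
Your approach is workable but takes a substantially harder road than the paper's, and as written it does not cover the Dupin cyclide case, which the lemma is explicitly meant to include.

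The paper's proof never invokes an intrinsic characterization of the characteristic circles. It simply observes that an isometry maps each sphere $\Sigma_{\bfc,r}(t)$ (centre $\bfc(t)$, radius $|r(t)|$) to the sphere $\Sigma_{\bff\circ\bfc,\tilde r}(t)$ with $\tilde r^2=r^2$, and each plane $\dot\Sigma_{\bfc,r}(t)$ to $\dot\Sigma_{\bff\circ\bfc,\tilde r}(t)$; hence $\bff$ carries $k_{\bfc,r}(t)$ to the characteristic circle $k_{\bff\circ\bfc,\tilde r}(t)$ of the \emph{new} family $(\bff\circ\bfc,\tilde r)$. Since $\bff$ is a symmetry these image circles lie on $\SSS_{\bfc,r}$, and as $\SSS_{\bff\circ\bfc,\tilde r}$ is swept out by them and $\SSS_{\bfc,r}$ is irreducible, the two surfaces coincide. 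Lemma~\ref{ratfunc} is applied once to $\tilde r^2=r^2$. Crucially, there is no claim that the image circles coincide with circles $k_{\bfc,r}(\sigma(t))$ of the \emph{original} family, so the ``main obstacle'' you flag never arises.

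Your route via the focal structure effectively smuggles the forward direction of Theorem~\ref{thm:singlespine} into the proof of this lemma: you obtain the M\"obius reparametrization $\sigma$ with $\bff\circ\bfc=\bfc\circ\sigma$ as a byproduct. That is more than Lemma~\ref{lemma-1} asserts, and it rests on the assumption that $\bff$ permutes the characteristic circles within a single family. For a Dupin cyclide this can fail: your focal-point description (``points whose normals pass through a common point'') picks out \emph{both} families of curvature circles, and $\bff$ may swap them, in which case there is no $\sigma$ with $\bff(\bfc(t))=\bfc(\sigma(t))$. The argument can be repaired by splitting into cases, but the paper's proof handles everything uniformly with no case distinction and no appeal to curvature-line geometry.
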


\begin{proof} Let $\Sigma_{\bfc,r}$ be the 1-parameter family of spheres \eqref{eq:Sigma1} corresponding to the pair $(\bfc, r)$. 
For every $t$, the isometry $\bff$ maps
\[
\Sigma_{\bfc,r}(t) \overset{\bff}{\longrightarrow} \Sigma_{\bff\circ\bfc,\tilde{r}}(t), \qquad 
\dot\Sigma_{\bfc,r}(t) \overset{\bff}{\longrightarrow} \dot\Sigma_{\bff\circ\bfc,\tilde{r}}(t),
\]
where $\tilde{r}^2=r^2$, so by Lemma \ref{ratfunc} we have either $\tilde{r}=r$ or $\tilde{r}=-r$. The envelope of the spheres $\Sigma_{\bff\circ\bfc,\tilde{r}}(t)$ defines a canal surface $\SSS_{\bff\circ \bfc, \tilde{r}}$, and $\bff$ maps characteristic circles of $\SSS_{\bfc,r}$ to characteristic circles of $\SSS_{\bff\circ \bfc, \tilde{r}}$,
\[
k_{\bfc, r}(t) \overset{\bff}{\longrightarrow} k_{\bff\circ \bfc, \tilde{r}}(t).
\]
Moreover, since $\bff$ is a symmetry of $\SSS_{\bfc,r}$, the latter characteristic circles are contained in $\SSS_{\bfc,r}$. Since $\SSS_{\bfc,r}$ is irreducible, it follows that $\SSS_{\bfc, r} = \SSS_{\bff\circ\bfc, \tilde{r}}$. In particular $\bff$ maps each spine curve of $\SSS_{\bfc, r}$ to a spine curve of $\SSS_{\bfc, r}$.
\end{proof}

\begin{lemma}\label{lemma-2}
The spine curves $\bfc$ and $\tbfc$ have identical speed.
\end{lemma}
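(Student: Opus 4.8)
The plan is to leverage Lemma \ref{lemma-1}, which already tells us that $\tbfc = \bff\circ\bfc$ is a spine curve of $\SSS_{\bfc,r}$ with radius function $\tr = \pm r$, together with Lemma \ref{lem:FrenetIsometry} on how isometries act on Frenet frames. First I would write $\bff(\bfx) = \bfQ\bfx + \bfb$ and simply differentiate: $\dot{\tbfc}(t) = \bfQ\dot\bfc(t)$. Since $\bfQ$ is orthogonal, $\|\dot{\tbfc}(t)\| = \|\bfQ\dot\bfc(t)\| = \|\dot\bfc(t)\|$ for every $t$ where $\bfc$ is defined. That is essentially the whole computation; the speed $\|\dot{\tbfc}\| = \|\dot\bfc\|$ follows instantly from the fact that an isometry has orthogonal linear part.

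The subtlety — and the reason this needs to be a \emph{lemma} rather than a one-line remark — is the parametrization. Saying ``$\bfc$ and $\tbfc$ have identical speed'' presupposes that the two spine curves are being compared under the \emph{same} parameter $t$, i.e., that $\tbfc(t) = \bff(\bfc(t))$ is the relevant reparametrization and not $\tbfc$ composed with some M\"obius transformation. Here one must be slightly careful: Lemma \ref{lemma-1} produces the spine curve $\bff\circ\bfc$ on the nose, with no reparametrization, so the comparison ``at the same $t$'' is exactly the right one, and $\dot{\tbfc}(t) = \bfQ\dot\bfc(t)$ holds verbatim. The main obstacle, such as it is, is therefore purely expository: being clear that $\tbfc$ denotes $\bff\circ\bfc$ with its inherited parametrization (as fixed in the paragraph preceding Lemma \ref{lemma-1}), so that the claimed equality of speeds is the pointwise identity of the two rational functions $t \mapsto \|\dot\bfc(t)\|$ and $t\mapsto \|\dot{\tbfc}(t)\|$.

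Concretely, the proof I would write is: From Lemma \ref{lemma-1}, $\tbfc = \bff\circ\bfc$ with $\bff(\bfx)=\bfQ\bfx+\bfb$ an isometry, so $\bfQ$ is orthogonal. Differentiating the affine relation $\tbfc(t) = \bfQ\bfc(t)+\bfb$ gives $\dot{\tbfc}(t) = \bfQ\dot\bfc(t)$. Since $\|\bfQ\bfv\|^2 = \bfv^\rmT\bfQ^\rmT\bfQ\bfv = \bfv^\rmT\bfv = \|\bfv\|^2$ for all $\bfv\in\RR^3$ by orthogonality of $\bfQ$, we conclude $\|\dot{\tbfc}(t)\| = \|\dot\bfc(t)\|$ for all $t$ in the common domain, which is the assertion. (One could alternatively invoke Lemma \ref{lem:FrenetIsometry} directly: it states $\bft_{\bff\circ\bfc} = \bfQ\bft_\bfc$, and since $\bft_\bfc = \dot\bfc/\|\dot\bfc\|$ while $\bft_{\tbfc} = \dot{\tbfc}/\|\dot{\tbfc}\|$, combined with $\dot{\tbfc} = \bfQ\dot\bfc$ this forces $\|\dot{\tbfc}\| = \|\dot\bfc\|$ — but this is a strictly weaker route that presupposes the same differentiation, so the direct argument is cleaner.) No real difficulty is expected here; the content is entirely in correctly recording what ``identical speed'' means and citing Lemma \ref{lemma-1} for the absence of a reparametrization.
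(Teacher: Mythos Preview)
Your proof is correct and is essentially identical to the paper's own proof, which simply writes $\|\dot{\tbfc}(t)\| = \|\frac{\rmd}{\rmd t}(\bff\circ\bfc)(t)\| = \|\bfQ\dot\bfc(t)\| = \|\dot\bfc(t)\|$ using orthogonality of $\bfQ$. The only minor point is that you need not invoke Lemma~\ref{lemma-1} to obtain $\tbfc = \bff\circ\bfc$; this is the \emph{definition} of $\tbfc$ given in the text preceding the lemma, so the argument requires nothing beyond differentiating an affine map.
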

\begin{proof}
Since $\bfQ$ is orthogonal,
\begin{equation}\label{eq:mod}
  \left\| \dot{\tbfc} (t)\right\|
= \left\| \frac{\rmd}{\rmd t} (\bff\circ \bfc)(t) \right\|
= \|\bfQ \dot \bfc (t)\| = \|\dot \bfc(t)\|. \qedhere
\end{equation}
\end{proof}

\begin{lemma} \label{lemma-3}
The spine curves $\bfc$ and $\tbfc$, together with the radius function $r$, have surface normal parametrizations \eqref{eq:NormalsFrenet} related by
\begin{equation}\label{eq:lemma-3}
\bfQ\bfN_{\bfc,r}(t,s)=\bfN_{\tbfc, r} \big(t, \det(\bfQ) s\big).
\end{equation}
\end{lemma}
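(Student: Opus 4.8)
The plan is to compute both sides of \eqref{eq:lemma-3} directly from the explicit formula \eqref{eq:NormalsFrenet}, using Lemma \ref{lem:FrenetIsometry} to track how the Frenet frame transforms under $\bff$, and Lemma \ref{lemma-2} to see that the scalar coefficients in \eqref{eq:NormalsFrenet} are unchanged. Write $\bff(\bfx) = \bfQ\bfx + \bfb$ and set $\tbfc = \bff\circ\bfc$. By Lemma \ref{lem:FrenetIsometry} the Frenet frame of $\tbfc$ is $(\bfQ\bft_\bfc, \bfQ\bfn_\bfc, \det(\bfQ)\bfQ\bfb_\bfc)$, and by Lemma \ref{lemma-2} we have $\|\dot{\tbfc}(t)\| = \|\dot\bfc(t)\|$, so the coefficient $\dot r(t)/\|\dot\bfc(t)\|$ appearing in \eqref{eq:NormalsFrenet} is literally the same for $\bfc$ and $\tbfc$ (note $r$ itself, not $\tilde r$, is used here, as in the statement). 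Substituting into \eqref{eq:NormalsFrenet} for the curve $\tbfc$ with second parameter $\det(\bfQ)s$ then gives
\begin{equation}\label{eq:plan-rhs}
\bfN_{\tbfc,r}\big(t,\det(\bfQ)s\big) = \frac{\dot r}{\|\dot\bfc\|}\bfQ\bft_\bfc + \sqrt{1-\Big(\tfrac{\dot r}{\|\dot\bfc\|}\Big)^2}\left(\frac{1-\det(\bfQ)^2 s^2}{1+\det(\bfQ)^2 s^2}\bfQ\bfn_\bfc + \frac{2\det(\bfQ)s}{1+\det(\bfQ)^2 s^2}\det(\bfQ)\bfQ\bfb_\bfc\right).
\end{equation}

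Next I would simplify the right-hand side using $\det(\bfQ)^2 = 1$ (since $\bfQ$ is orthogonal) and $\det(\bfQ)^2 = 1$ again in the last term's product $\det(\bfQ)\cdot\det(\bfQ) = 1$. The first rational factor becomes $(1-s^2)/(1+s^2)$ and the second becomes $2s/(1+s^2)$, so the bracketed expression collapses to $\frac{1-s^2}{1+s^2}\bfQ\bfn_\bfc + \frac{2s}{1+s^2}\bfQ\bfb_\bfc = \bfQ\big(\frac{1-s^2}{1+s^2}\bfn_\bfc + \frac{2s}{1+s^2}\bfb_\bfc\big)$. Pulling $\bfQ$ out of every term, the right-hand side of \eqref{eq:plan-rhs} equals $\bfQ\bfN_{\bfc,r}(t,s)$, which is exactly \eqref{eq:lemma-3}.

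There is essentially no serious obstacle here; this is a direct verification once Lemmas \ref{lem:FrenetIsometry} and \ref{lemma-2} are in hand. The one point that requires a word of care is the interplay of the two occurrences of $\det(\bfQ)$: one comes from the binormal transformation law $\bfb_{\tbfc} = \det(\bfQ)\bfQ\bfb_\bfc$ in Lemma \ref{lem:FrenetIsometry}, and one comes from the substitution $s \mapsto \det(\bfQ)s$ in the second argument; these combine (together with $\det(\bfQ)^2 = 1$ in the denominators) so that the sign of $\det(\bfQ)$ cancels out entirely, which is why the \emph{same} rational parametrization in $s$ reappears after applying $\bfQ$. It is worth remarking in the proof that this is precisely the reason the substitution $s \mapsto \det(\bfQ)s$ is the natural one: when $\bff$ is orientation-reversing it reverses the orientation of the characteristic circles, which \eqref{eq:NormalsFrenet} encodes through the standard rational parametrization $(1-s^2)/(1+s^2), 2s/(1+s^2)$ of the unit circle.
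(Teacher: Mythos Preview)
Your proof is correct and follows exactly the same approach as the paper: multiply \eqref{eq:NormalsFrenet} by $\bfQ$ and invoke Lemmas \ref{lem:FrenetIsometry} and \ref{lemma-2}. The paper compresses this into a single sentence, whereas you have written out the verification in full, but the underlying argument is identical.
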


\begin{proof} The result follows after multiplying \eqref{eq:NormalsFrenet} by $\bfQ$, using Lemmas \ref{lem:FrenetIsometry} and \ref{lemma-2}.\!\!
\end{proof}

\subsection{Characterization}
Now we can state the characterization theorem for the existence of symmetries of canal surfaces with a single spine curve.

\begin{theorem}\label{thm:singlespine}
Let $\SSS_{\bfc,r}$ be a canal surface, not a Dupin cyclide, with non-linear spine curve $\bfc$. The isometry $\bff(\bfx) = \bfQ \bfx + \bfb$ is a symmetry of $\SSS_{\bfc,r}$ if and only if there exists a M\"obius transformation $\varphi$ such that
\begin{enumerate}
\item[C1:] the spine curve satisfies $\bff\circ \bfc = \bfc \circ \varphi$;
\item[C2:] the radius function satisfies $r^2 = (r\circ \varphi)^2$.
\end{enumerate}
\end{theorem}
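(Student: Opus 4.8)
The plan is to prove both implications by tracking how a symmetry interacts with the two defining equations \eqref{eq:Sigma1SigmaDot1} and, equivalently, with the Frenet-frame parametrization \eqref{eq:NormalsFrenet} of the surface normals. For the ``only if'' direction, suppose $\bff(\bfx) = \bfQ\bfx + \bfb$ is a symmetry of $\SSS_{\bfc,r}$. By Lemma \ref{lemma-1}, $\tbfc := \bff\circ\bfc$ is again a spine curve of $\SSS_{\bfc,r}$, with associated radius function $\tr$ equal to $r$ or $-r$; in particular $r^2 = \tr^2$. Now I would invoke the hypothesis that $\SSS_{\bfc,r}$ is \emph{not} a Dupin cyclide: by Maxwell's theorem, a canal surface that is not a Dupin cyclide has a \emph{unique} (unoriented) spine curve, so the curve $\tbfc$ traced by $\bff\circ\bfc$ must coincide, as a set, with the curve $\CCC$ traced by $\bfc$. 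Hence $\bff$ is a symmetry of the spine curve $\CCC$ in the sense of Theorem \ref{thm:diagram}. Since $\bfc$ is proper, that theorem yields a M\"obius transformation $\varphi$ with $\bff\circ\bfc = \bfc\circ\varphi$, which is C1. It remains to get C2 in the sharper form $r^2 = (r\circ\varphi)^2$ (note $\tr^2 = r^2$ only relates $r$ to the radius attached to the reparametrized curve). For this, I would compare the two spheres of the family centered at the same point $\bfc(\varphi(t)) = \tbfc(t)$: equation \eqref{eq:Sigma1} forces their radii to have equal squares, and since $\bfc$ is injective off a finite set, the radius function of $\SSS_{\tbfc,\tr}$ evaluated at $t$ equals $r(\varphi(t))$ up to sign; combined with $\tr^2 = r^2$ this gives $r(t)^2 = r(\varphi(t))^2$ for all but finitely many $t$, hence identically by Lemma \ref{ratfunc} (or analyticity). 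The main subtlety here is being careful that ``$\bff$ permutes spine curves'' really pins down $\varphi$ as a reparametrization of $\bfc$ and not merely an isometry of the underlying set — this is exactly where properness of $\bfc$ and Theorem \ref{thm:diagram} do the work.

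For the ``if'' direction, assume C1 and C2 hold for some M\"obius transformation $\varphi$. From C1 and Lemma \ref{lemma-2}, $\bfc$ and $\bfc\circ\varphi$ have the same speed after reparametrization; more precisely $\|\dot{(\bff\circ\bfc)}(t)\| = \|\dot\bfc(t)\|$, while $\|\dot{(\bfc\circ\varphi)}(t)\| = \|\dot\bfc(\varphi(t))\|\,|\varphi'(t)|$. I would then show directly that $\bff$ maps the sphere family $\Sigma_{\bfc,r}$ onto $\Sigma_{\bfc,r}$ (as an unoriented family), hence maps the envelope onto the envelope. Concretely, $\bff$ sends the sphere $\Sigma_{\bfc,r}(t)$, centered at $\bfc(t)$ of radius $|r(t)|$, to the sphere centered at $\bff(\bfc(t)) = \bfc(\varphi(t))$ of the same radius $|r(t)|$; by C2 this radius equals $|r(\varphi(t))|$, so the image sphere is exactly $\Sigma_{\bfc,r}(\varphi(t))$. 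Thus $\bff$ permutes the members of $\Sigma_{\bfc,r}$ via the parameter change $\varphi$. Since the characteristic circle $k_{\bfc,r}(t) = \Sigma_{\bfc,r}(t)\cap\dot\Sigma_{\bfc,r}(t)$ is determined by the family (it is the limit of intersections of nearby spheres), $\bff$ carries characteristic circles to characteristic circles, so $\bff(\SSS_{\bfc,r}) \subseteq \SSS_{\bfc,r}$; applying the same argument to $\bff^{-1}$ gives equality, and $\bff$ is a symmetry. Alternatively — and this is the cleaner route I would actually write out — one checks via Lemma \ref{lemma-3} that the normal parametrization transforms correctly: from C1, $\bff\circ\bfc = \bfc\circ\varphi$, so $\tbfc$ literally \emph{is} $\bfc$ reparametrized, its Frenet frame is the reparametrized Frenet frame of $\bfc$, and then \eqref{eq:NormalsFrenet} together with C2 (which controls the coefficient $\dot r/\|\dot\bfc\|$ up to sign — here one must check that the derivative ratio is invariant, using that $|r\circ\varphi| = |r|$ implies $\dot r\circ\varphi\cdot\varphi' = \pm\dot r$ and $\|\dot\bfc\circ\varphi\|\cdot|\varphi'| = \|\dot{\bff\circ\bfc}\|$) shows $\bff\circ\bfF_{\bfc,r}(t,s) = \bfF_{\bfc,r}(\varphi(t), \pm s)$ for an appropriate sign, exhibiting $\bff(\SSS)\subseteq\SSS$ and hence $\bff(\SSS)=\SSS$.

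I expect the main obstacle to be the bookkeeping of signs and of the factor $|\varphi'(t)|$ in the ``if'' direction: one needs to verify that the first normal coefficient $\dot r(t)/\|\dot\bfc(t)\|$ in \eqref{eq:NormalsFrenet} is preserved (up to the overall sign of $r$) under the simultaneous reparametrization by $\varphi$ and the isometry $\bff$, and that the orientation-reversing case $\det\bfQ = -1$ merely flips the $s$-parameter as in Lemma \ref{lemma-3} without breaking anything. A second, more conceptual point requiring care is the appeal to Maxwell's theorem in the ``only if'' direction: it must be invoked precisely to conclude that $\bff$ maps the (unique) spine curve to itself rather than to some a priori different curve, which is what licenses the use of Theorem \ref{thm:diagram}. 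Everything else is a routine combination of Lemmas \ref{ratfunc}, \ref{lemma-1}, \ref{lemma-2} and \ref{lemma-3}.
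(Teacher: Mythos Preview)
Your proposal is correct and follows essentially the same architecture as the paper: Lemma~\ref{lemma-1} plus uniqueness of the spine curve (Maxwell) plus Theorem~\ref{thm:diagram} for the forward direction, and the Frenet-based normal parametrization via Lemma~\ref{lemma-3} for the converse. Two remarks are worth making.

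First, the $|\varphi'|$ bookkeeping you flag as the main obstacle in the converse is a phantom: the paper sidesteps it by \emph{not} trying to land at $\bfF_{\bfc,r}(\varphi(t),\pm s)$. Lemma~\ref{lemma-3} gives $\bfQ\bfN_{\bfc,r}(t,s)=\bfN_{\tbfc,r}\big(t,\det(\bfQ)s\big)$ at the \emph{same} parameter~$t$, with $\tbfc:=\bff\circ\bfc$; combined with C1 and (after Lemma~\ref{ratfunc}) the harmless normalization $\tr:=r\circ\varphi=r$, this yields $\bff\circ\bfF_{\bfc,r}(t,s)=\bfF_{\tbfc,\tr}\big(t,\det(\bfQ)s\big)$. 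The right-hand side lies on $\SSS_{\tbfc,\tr}=\SSS_{\bfc\circ\varphi,\,r\circ\varphi}$, which is $\SSS_{\bfc,r}$ simply because $\varphi$ is a reparametrization. No chain rule, no $\varphi'$, no sign-tracking beyond $\det\bfQ$.

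Second, your first route for the converse --- permuting the spheres $\Sigma_{\bfc,r}(t)\mapsto\Sigma_{\bfc,r}(\varphi(t))$ and concluding that characteristic circles, hence the envelope, are preserved --- is a legitimate and more elementary alternative that avoids the Frenet machinery altogether. It can be made precise by checking directly that $\bff$ also carries $\dot\Sigma_{\bfc,r}(t)$ to $\dot\Sigma_{\bfc,r}(\varphi(t))$: differentiating $\bff\circ\bfc=\bfc\circ\varphi$ and $r^2=(r\circ\varphi)^2$ shows that both terms of \eqref{eq:SigmaDot1} pick up the common factor $1/\varphi'(t)$. The paper does not take this route, presumably to keep the argument parallel to the Dupin-cyclide case in Theorem~\ref{thm:MainTheorem}, where Lemma~\ref{lemma-3} is reused verbatim.
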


\begin{proof}

``$\Longrightarrow$'' By Lemma \ref{lemma-1}, $\bff$ must be a symmetry of the spine curve $\bfc$. By Theorem \ref{thm:diagram} this is equivalent to the existence of a M\"obius transformation $\varphi$ for which $\bff\circ \bfc = \bfc \circ \varphi$, establishing C1.
Using Condition C1, then Lemma \ref{lemma-1} with $\tilde{r} = \pm r$, and finally that $\varphi$ is a birational map on the real line, one obtains
\[ \SSS_{\bfc \circ \varphi,\tilde{r}}
 = \SSS_{\bff \circ \bfc, \tilde{r}}
 = \SSS_{\bfc, r}
 = \SSS_{\bfc \circ \varphi,r\circ \varphi} \]
and deduces $r^2 = \tilde{r}^2 = (r\circ \varphi)^2$, establishing C2.

``$\Longleftarrow$'': Let $\bfF_{\bfc,r}(t,s)$ be the parametrization \eqref{eq:CanalSurfaceParametrization} of $\SSS_{\bfc,r}$ with normals $\bfN_{\bfc,r}(t,s)$ as in \eqref{eq:NormalsFrenet}. Since the radius function $r$ and $\tr := r\circ \varphi$ are real rational functions satisfying $r^2=\tilde{r}^2$, Lemma~\ref{ratfunc} implies $r=\pm\tr$. As a change of sign of the radius function results in a change of orientation of the canal surface and leaves the geometric shape unchanged, we may safely assume that $r=\tr$. Then using Lemma~\ref{lemma-3} with $\tbfc := \bfc\circ \varphi = \bff\circ \bfc$, the isometry $\bff$ maps any point $\bfF_{\bfc,r}(t,s)$ on $\SSS_{\bfc,r}$ to
\begin{align*}
\bff \circ \bfF_{\bfc,r}(t,s) & = (\bff\circ \bfc)(t) + r(t) \bfQ \bfN_{\bfc,r}(t,s) 
                                = \tbfc (t) + \tr(t) \bfN_{\tbfc,\tr}\big(t,\det(\bfQ) s\big)\\
                              & = \bfF_{\tbfc,\tr}\big(t,\det(\bfQ) s\big).
\end{align*}
It follows that $\bff \circ \bfF_{\bfc,r}(t,s)$ is a point on the canal surface with spine curve~$\tbfc$, radius function $\tr$ and corresponding normal $\bfN_{\tbfc,\tr}$ as in \eqref{eq:NormalsFrenet}, which is just a reparametrization of $\SSS_{\bfc,r}$. Therefore $\bff(\SSS_{\bfc,r})=\SSS_{\bfc,r}$, and $\bff$ is a symmetry of $\SSS_{\bfc,r}$.
\end{proof}

In the specific case of a pipe surface, Theorem \ref{thm:singlespine} takes the following form.

\begin{corollary}\label{cor:pipe}
Let $\SSS$ be a pipe surface, not a Dupin cyclide, with a non-linear spine curve $\bfc$. An isometry is a symmetry of $\SSS$ if and only if it is a symmetry of $\bfc$.
\end{corollary}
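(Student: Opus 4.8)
The plan is to derive Corollary \ref{cor:pipe} directly from Theorem \ref{thm:singlespine} by observing that the radius condition C2 becomes automatic when $r$ is constant. First I would note that a pipe surface is, by definition, a canal surface $\SSS_{\bfc,r}$ whose radius function $r \equiv \rho$ is a nonzero constant. Since $\SSS$ is assumed not to be a Dupin cyclide and $\bfc$ is non-linear, Theorem \ref{thm:singlespine} applies verbatim: an isometry $\bff(\bfx) = \bfQ\bfx + \bfb$ is a symmetry of $\SSS$ if and only if there is a M\"obius transformation $\varphi$ satisfying C1, namely $\bff\circ \bfc = \bfc\circ \varphi$, and C2, namely $r^2 = (r\circ\varphi)^2$.

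The key step is that C2 is vacuous in the pipe case: if $r \equiv \rho$ is constant, then $r\circ\varphi \equiv \rho$ as well, so $r^2 = (r\circ\varphi)^2 = \rho^2$ holds trivially for \emph{every} M\"obius transformation $\varphi$. Hence, for a pipe surface, the characterization in Theorem \ref{thm:singlespine} collapses to the single requirement that there exist a M\"obius transformation $\varphi$ with $\bff\circ\bfc = \bfc\circ\varphi$. Now I would invoke Theorem \ref{thm:diagram}: since $\bfc$ is a proper (birational) parametrization of the spine curve $\CCC$, the existence of such a $\varphi$ making the diagram \eqref{eq:fundamentaldiagram} commute is precisely equivalent to $\bff$ being a symmetry of $\CCC$. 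Combining these two equivalences yields exactly the statement: $\bff$ is a symmetry of $\SSS$ if and only if $\bff$ is a symmetry of $\bfc$.

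One small point to address carefully is the trivial/nontrivial distinction. Theorem \ref{thm:diagram} is phrased in terms of a \emph{nontrivial} isometry and \emph{nontrivial} M\"obius transformation, and ``symmetric'' means admitting a nontrivial symmetry; but the logical equivalence ``$\bff$ is a symmetry of $\SSS$ $\iff$ $\bff$ is a symmetry of $\bfc$'' holds for the identity as well (both sides hold), so there is no issue extending it to all isometries. I would also remark that Theorem \ref{thm:singlespine}'s proof of the forward direction already uses Lemma \ref{lemma-1} to show $\bff$ restricts to a symmetry of $\bfc$, and the converse direction of the corollary is immediate from the converse direction of Theorem \ref{thm:singlespine} with the trivially-satisfied C2; so really the corollary is just the specialization, and the proof can be two or three sentences.

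I do not anticipate any genuine obstacle here — the work has all been done in Theorem \ref{thm:singlespine}. The only thing to be mildly careful about is making explicit why ``pipe surface'' forces $r$ constant (this is the definition recalled earlier in Section \ref{sec:background}) and hence why C2 is automatically satisfied; everything else is a direct citation of Theorem \ref{thm:singlespine} together with, implicitly, Theorem \ref{thm:diagram}.
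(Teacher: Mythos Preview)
Your proposal is correct and matches the paper's approach exactly: the paper presents Corollary~\ref{cor:pipe} as an immediate specialization of Theorem~\ref{thm:singlespine} without a separate proof, and your observation that C2 is vacuous for constant $r$ (together with the implicit appeal to Theorem~\ref{thm:diagram} for the equivalence between C1 and $\bff$ being a symmetry of $\bfc$) is precisely the intended reasoning.
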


\subsection{Algorithm}\label{sec:algorithmsingle}
Let us cast the conditions in Theorem \ref{thm:singlespine} into a computer algebra setting. Each M\"obius transformation $\varphi$ can be described by introducing an auxiliary variable $u$ satisfying $u - \varphi(t) = 0$. Clearing denominators, we arrive at the \emph{M\"obius-like polynomial} $F(t,u) := u(\gamma t + \delta) - (\alpha t + \beta)$, which is zero precisely when $u = \varphi(t)$. Note that the M\"obius-like polynomials are the \emph{irreducible} bilinear polynomials, since $\alpha \delta - \beta \gamma \neq 0$. The M\"obius-like polynomial $F(t,u) = u - t$ is called \emph{trivial}, as the associated M\"obius transformation is the identity.

Under the condition $u = \varphi(t)$, Condition C2 of Theorem \ref{thm:singlespine} takes the form $r^2(t) - r^2(u) = 0$. Writing $r(t)=A(t)/B(t)$, with $A(t),B(t)$ coprime, and clearing denominators yields the polynomial condition
\begin{equation}\label{eq:R-pol0}
R(t,u) = A^2(t) B^2(u) - A^2(u) B^2(t) = 0.
\end{equation}
The following result shows how Condition C2 in Theorem \ref{thm:singlespine} can be tested by checking for M\"obius-like factors of $R$.

\begin{proposition}\label{thm:FdividesR}
Let $r=A/B$ be a real rational function, with corresponding bivariate polynomial $R$ as in \eqref{eq:R-pol0}, and let $\varphi$ be a M\"obius transformation with corresponding M\"obius-like polynomial $F$. Then Condition C2 holds precisely when $F$ divides $R$.
\end{proposition}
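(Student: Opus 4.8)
The plan is to show that the polynomial identity $R \equiv 0$ along the curve $u = \varphi(t)$ is equivalent to the divisibility $F \mid R$ in the polynomial ring $\RR[t,u]$. First I would unwind the definitions: Condition C2 asserts $r^2(t) = (r\circ\varphi)^2(t) = r^2(\varphi(t))$ for all $t$ in a cofinite subset of $\RR$; substituting $r = A/B$ and clearing the denominators $B^2(t)B^2(\varphi(t))$, this says exactly that $R(t,\varphi(t)) = 0$ for all but finitely many $t$. So the content of the proposition is: $R(t,\varphi(t)) = 0$ identically $\iff$ $F(t,u) \mid R(t,u)$.

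For the direction ``$F \mid R \Rightarrow$ C2'', I would write $R = F \cdot G$ for some $G\in\RR[t,u]$ and simply evaluate at $u = \varphi(t)$: since $F(t,\varphi(t)) = \varphi(t)(\gamma t + \delta) - (\alpha t + \beta) = 0$ wherever $\gamma t + \delta \neq 0$ (i.e. off a finite set), we get $R(t,\varphi(t)) = 0$ off a finite set, which is Condition C2. For the converse, the key tool is that $F$, being a M\"obius-like polynomial, is \emph{irreducible} in $\RR[t,u]$ (this is noted in the text, as $\alpha\delta - \beta\gamma \neq 0$). I would view $R$ and $F$ in $\RR(t)[u]$, the polynomials in $u$ with coefficients rational in $t$: there $F$ is linear in $u$ with root $u = \varphi(t)$, and C2 says precisely that $\varphi(t)$ is a root of $R$ regarded as an element of $\RR(t)[u]$. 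Hence the linear polynomial $u - \varphi(t)$ divides $R$ in $\RR(t)[u]$, and clearing denominators gives $F \mid R$ in $\RR(t)[u]$, hence — by Gauss's lemma, since $F$ is primitive as a polynomial in $u$ over $\RR[t]$ (its content is a constant because $\gcd$ of coefficients of a M\"obius-like polynomial is $1$ up to units) — $F \mid R$ already in $\RR[t,u]$.

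Alternatively, and perhaps more cleanly, I would avoid Gauss's lemma by using irreducibility directly: perform polynomial division of $R$ by $F$ over $\RR[t,u]$ after noting the leading coefficient of $F$ in $u$ is $\gamma t + \delta$; multiplying $R$ by a suitable power of $\gamma t + \delta$ and dividing, one obtains $(\gamma t + \delta)^N R = F\cdot Q + S$ with $S\in\RR[t,u]$ of $u$-degree $0$, i.e. $S = S(t)$. Evaluating at $u = \varphi(t)$ forces $S(t) = 0$ off a finite set, hence $S \equiv 0$, so $F \mid (\gamma t + \delta)^N R$. Since $F$ is irreducible and does not divide the polynomial $(\gamma t + \delta)^N$ (their zero sets in $\RR^2$ are distinct: $F$ vanishes on the graph of $\varphi$, a curve depending genuinely on $u$, whereas $(\gamma t+\delta)^N$ does not), we conclude $F \mid R$.

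The main obstacle is the bookkeeping needed to pass from divisibility in $\RR(t)[u]$ (where it is essentially immediate, because $F$ is linear in $u$) to divisibility in $\RR[t,u]$: this requires either invoking the irreducibility of $F$ together with Gauss's lemma/primitivity, or carrying out the pseudo-division argument and then arguing that the extra factor $(\gamma t + \delta)^N$ can be cancelled because $F$ is coprime to $\gamma t + \delta$. One should also handle carefully the ``for all but finitely many $t$'' caveat throughout — passing between a polynomial identity on $\RR^2$ and one that holds on a dense open subset of the graph of $\varphi$ — but this is routine given that all objects involved are rational and the graph of $\varphi$ is an irreducible curve on which a polynomial vanishing identically must be a multiple of $F$.
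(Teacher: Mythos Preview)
Your proof is correct and follows the same overall skeleton as the paper's: both reduce Condition~C2 to the identity $R\big(t,\varphi(t)\big)=0$ and then argue that this is equivalent to $F\mid R$. The difference lies only in how the second equivalence is justified. The paper argues geometrically in one line: the identity $R\big(t,\varphi(t)\big)=0$ means the zeroset of $R$ contains the graph of $\varphi$, which coincides with the zeroset of the irreducible polynomial $F$, and hence $F\mid R$. You instead give an explicit algebraic justification --- either by viewing $R$ and $F$ in $\RR(t)[u]$, using that $F$ is linear in $u$, and invoking Gauss's lemma, or by pseudo-division and cancelling the factor $(\gamma t+\delta)^N$ via the irreducibility of $F$. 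Your route is longer but makes fully transparent a step the paper leaves to the reader (namely, why ``vanishing on the zeroset of an irreducible polynomial'' forces divisibility over $\RR$); the paper's route is cleaner once one accepts that standard fact.
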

\begin{proof}
The equation $r^2(t) - r^2\big(\varphi(t)\big) = 0$ holds identically iff $R\big(t,\varphi(t)\big) = 0$ holds identically. In that case, the zeroset of $R$ contains the graph of $\varphi$, which is equal to the zeroset of $F$. Since $F$ is irreducible, it follows that this happens precisely when $F$ divides $R$.
\end{proof}

Combining Theorem \ref{thm:singlespine} and Proposition \ref{thm:FdividesR}, we observe that we can determine the symmetries of a canal surface by:
\begin{enumerate}
\item[(i)] testing Condition C2 on the radius function, by finding the M\"obius-like factors $F$ of $R$ and corresponding tentative M\"obius transformations $\varphi$;
\item[(ii)] testing Condition C1 on the spine curve for each such $\varphi$, by determining whether $\varphi$ corresponds to a symmetry $\bff$ of the spine curve $\bfc$.
\end{enumerate}

In step (i), since $\varphi$ does not necessarily have rational coefficients, we need to compute the factors of $R(t,u)$ with \emph{real algebraic} coefficients. This can be done for instance by using the {\tt AFactor} command in Maple 18, which works fine for moderate inputs. An alternative is to use the method of \cite[\S 3.2]{Alcazar.Hermoso.Muntingh15} to find {\it only} the M\"obius-like factors of $R(t,u)$, instead of a complete factorization.

In step (ii), one can apply the method in \cite[\S 4]{Alcazar.Hermoso.Muntingh15}, illustrated in Example~\ref{ex:crunode}, to determine an isometry $\bff$ associated with $\varphi$. Then one verifies Condition C1 by direct substitution.

Thus we arrive at Algorithm \texttt{SymCanal} for computing the symmetries of a canal surface with a unique non-linear spine curve. Notice that in this algorithm we distinguish pipe surfaces as a special case, since in that case $R$ is identically zero.

\begin{algorithm}[t!]
\begin{algorithmic}[1]
\REQUIRE A real, rational, properly parametrized, non-linear spine curve $\bfc$, and a real, rational radius function $r$, defining a parametrization \eqref{eq:CanalSurfaceParametrization} of a canal surface $\SSS$ that is not a Dupin cyclide.
\ENSURE The symmetries of $\SSS$.
\IF{$r$ is constant}
\STATE{return the symmetries of the spine curve by using the algorithm in \cite{Alcazar.Hermoso.Muntingh15}}
\ELSE
\STATE{find the M\"obius-like factors of $R$ and associated M\"obius transformations~$\varphi$}
\FOR{each M\"obius transformation $\varphi$}
\STATE{find the isometry $\bff$ associated with $\varphi$ using the method in \cite[\S 4]{Alcazar.Hermoso.Muntingh15}}
\STATE{if $\bff \circ \bfc=\bfc\circ \varphi$, return the isometry $\bff$}
\ENDFOR
\ENDIF
\end{algorithmic}
\caption{{\tt SymCanal}}\label{alg:symcanal}
\end{algorithm}

\begin{example}\label{ex:crunode}
Consider the crunode spine curve and radius function
\[ \bfc(t) = \left(\frac{t}{t^4 + 1},\frac{t^2}{t^4 + 1}, \frac{t^3}{t^4 + 1} \right), \qquad r(t) = \frac{t^2}{t^4 + 1}, \]
with corresponding canal surface shown in Figure \ref{fig:examples}, left. Then
\[ R(t,u) = (u - t)(u + t)(u t - 1)(u t + 1)(u^2t^2 + 1)(u^2 + t^2), \]
whose M\"obius-like factors correspond to M\"obius transformations
\[ \varphi_1(t) = t,\qquad \varphi_2(t) = -t,\qquad \varphi_3(t) = 1/t,\qquad \varphi_4(t) = -1/t.\]

Suppose Condition C1 holds with $\varphi = \varphi_1, \varphi_2$, i.e.,
\begin{equation}\label{eq-ex-3} \bfQ\bfc(t) + \bfb = \bff \circ \bfc(t) = \bfc\circ\varphi_i(t) = \bfc\big( (-1)^{i+1} t\big),\qquad i = 1, 2. \end{equation}
Substituting $t = 0$ we immediately determine $\bfb = 0$. Taking, for $n = 1,2,3$, the $n$-th derivative of \eqref{eq-ex-3} and evaluating at $t = 0$, we get $\bfQ$ from its action on $\{\bfc'(0),\bfc''(0),\bfc'''(0)\}$, and
\[ \bff_i(\bfx)=\bfQ_i\bfx,\qquad \bfQ_i = \begin{bmatrix} (-1)^{i+1} & 0 & 0\\ 0 & 1 & 0\\ 0 & 0 & (-1)^{i+1} \end{bmatrix},\quad i = 1, 2, \]
corresponding to the trivial symmetry and the half-turn around the $y$-axis. One directly verifies $\bff_i\circ \bfc(t)=\bfc\circ \varphi_i(t)$ for $i=1,2$, confirming that Condition C1 holds for $\bff_1,\bff_2$; so $\bff_1,\bff_2$ are symmetries of the canal surface defined by $(\bfc,r)$.

Next suppose Condition C1 holds with $\varphi = \varphi_3, \varphi_4$. With the above procedure, we obtain isometries
\[ \bff_i(\bfx)=\bfQ_i\bfx,\qquad \bfQ_i = \begin{bmatrix}0 & 0 & (-1)^{i+1}\\0 & 1 & 0\\(-1)^{i+1} & 0 & 0 \end{bmatrix},\quad i = 3,4,
\]
which are reflections in the planes $x + (-1)^i z = 0$. Verifying $\bff_i\circ \bfc(t) = \bfc \circ \varphi_i(t)$ for $i=3,4$, we confirm that $\bff_3$ and $\bff_4$ are also symmetries of the canal surface defined by $(\bfc,r)$.
\end{example}

\begin{figure}
\begin{center}
\includegraphics[scale=0.18]{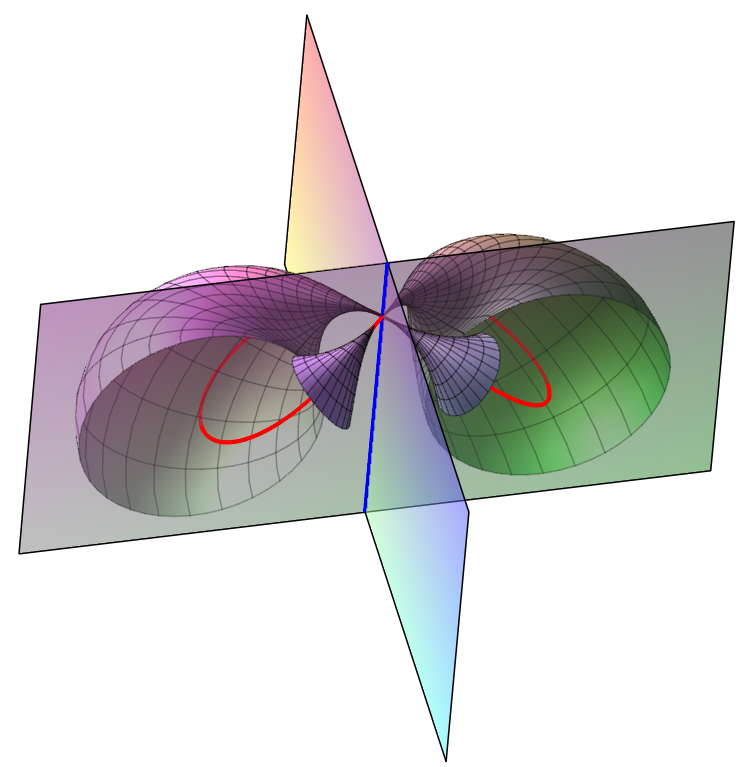}
\includegraphics[scale=0.21]{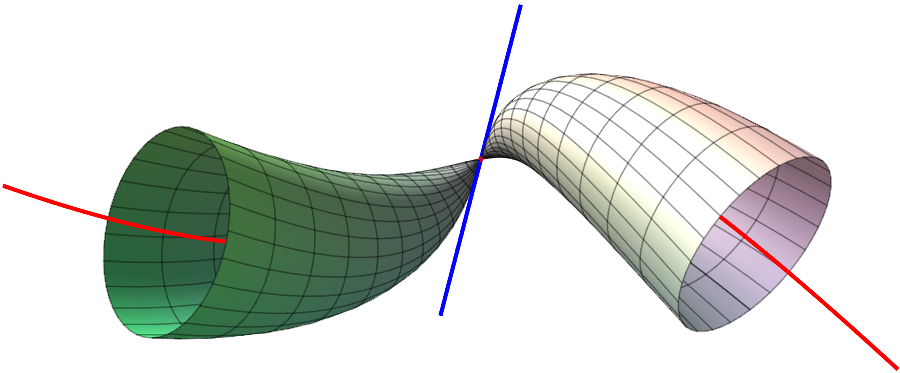}
\end{center}
\caption{Canal surfaces together with their spine curves and symmetry elements for Example~\ref{ex:crunode} (left) and Example~\ref{ex:twisted} (right).}\label{fig:examples}
\end{figure}

\section{Symmetries of Dupin cyclides:\\characterization, classification and algorithm}\label{sec:dupin}
In this section we consider the remaining case of a Dupin cyclide $\SSS$ with two distinct spine curves $\bfc_1,\bfc_2$ and corresponding radius functions $r_1,r_2$. First we provide a characterization theorem for an isometry to be a symmetry of a Dupin cyclide. Based on this theorem, we provide a complete classification of the symmetries of the three types of Dupin cyclides, together with the symmetry group in each case. Finally, based on this classification, we present an algorithm for computing the symmetries of a Dupin cyclide represented by pairs $(\bfc_i,r_i)$, with $i=1,2$, not necessarily in canonical form.

\subsection{Characterization} \label{sec:char-dupin}
Using Lemmas \ref{lemma-1}--\ref{lemma-3}, we establish the following characterization theorem for the symmetries of Dupin cyclides. 

\begin{theorem}\label{thm:MainTheorem}
Let $\SSS$ be a Dupin cyclide with non-linear spine curves $\bfc_1,\bfc_2$ and radius functions $r_1,r_2$. The isometry $\bff(\bfx) = \bfQ \bfx + \bfb$ is a symmetry of $\SSS$ if and only if there exist M\"obius transformations $\varphi_1,\varphi_2$ such that either
\begin{enumerate}
\item[A1:] the spine curves satisfy $\bff\circ \bfc_1 = \bfc_1 \circ \varphi_1$ and $\bff\circ \bfc_2 = \bfc_2 \circ \varphi_2$;
\item[A2:] the radius functions satisfy $r_1^2 = (r_1\circ\varphi_1)^2$ and $r_2^2 = (r_2\circ\varphi_2)^2$,
\end{enumerate}
or
\begin{enumerate}
\item[B1:] the spine curves satisfy $\bff\circ \bfc_1 = \bfc_2 \circ \varphi_1$ and $\bff\circ \bfc_2 = \bfc_1 \circ \varphi_2$;
\item[B2:] the radius functions satisfy $r_1^2 = (r_2\circ\varphi_1)^2$ and $r_2^2 = (r_1\circ\varphi_2)^2$.
\end{enumerate}
\end{theorem}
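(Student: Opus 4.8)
The plan is to mimic the proof of Theorem~\ref{thm:singlespine}, but to keep track of the fact that a Dupin cyclide $\SSS$ carries \emph{two} spine curves, so that a symmetry $\bff$ must permute the pair $\{\bfc_1,\bfc_2\}$ rather than fix each of them. First I would recall from Section~\ref{sec:background2} that the Dupin cyclide $\SSS = \SSS_{\bfc_1,r_1} = \SSS_{\bfc_2,r_2}$ is the envelope of exactly two $1$-parameter families of spheres, with distinct spine curves, and that these two spine curves are geometrically intrinsic to $\SSS$ (each is the locus of centres of one of the two sphere families, equivalently the set of points where the normal lines along a characteristic circle meet). The key structural input is therefore: \emph{if $\bff$ is a symmetry of $\SSS$, then $\{\bff(\CCC_1),\bff(\CCC_2)\} = \{\CCC_1,\CCC_2\}$}, where $\CCC_i$ is the image of $\bfc_i$.

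\textbf{Proof.} ``$\Longrightarrow$'': Let $\bff(\bfx) = \bfQ\bfx + \bfb$ be a symmetry of $\SSS$. By Lemma~\ref{lemma-1} applied to the representation $\SSS_{\bfc_1,r_1}$, the curve $\bff\circ\bfc_1$ is again a spine curve of $\SSS$; by Maxwell's theorem $\SSS$ has exactly two spine curves, namely those parametrized by $\bfc_1$ and $\bfc_2$ (up to reparametrization). Hence $\bff\circ\bfc_1$ parametrizes either $\CCC_1$ or $\CCC_2$. In the first case, $\bff$ is a symmetry of $\CCC_1$, so by Theorem~\ref{thm:diagram} there is a M\"obius transformation $\varphi_1$ with $\bff\circ\bfc_1 = \bfc_1\circ\varphi_1$; applying Lemma~\ref{lemma-1} again (now with $\bfc_1$ replaced by the parametrization $\bfc_1\circ\varphi_1$ and using that $\varphi_1$ is birational) gives $\SSS_{\bfc_1\circ\varphi_1,\tilde r_1} = \SSS_{\bfc_1\circ\varphi_1,r_1\circ\varphi_1}$ with $\tilde r_1 = \pm r_1$, whence $r_1^2 = (r_1\circ\varphi_1)^2$. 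The same argument applied to $\bfc_2$ shows $\bff\circ\bfc_2$ parametrizes $\CCC_1$ or $\CCC_2$; but $\bff$ is a bijection, so if $\bff\circ\bfc_1$ lands on $\CCC_1$ then $\bff\circ\bfc_2$ must land on $\CCC_2$, giving case A with $\bff\circ\bfc_2 = \bfc_2\circ\varphi_2$ and $r_2^2 = (r_2\circ\varphi_2)^2$. In the second case, $\bff\circ\bfc_1$ parametrizes $\CCC_2$; then, since $\bfc_1$ and $\bfc_2$ are both proper, $\varphi_1 := \bfc_2^{-1}\circ\bff\circ\bfc_1$ is a birational self-map of the line, i.e.\ a M\"obius transformation, and $\bff\circ\bfc_1 = \bfc_2\circ\varphi_1$; the analogous identity from Lemma~\ref{lemma-1} (comparing $\SSS_{\bfc_2\circ\varphi_1,\pm r_1}$ with $\SSS_{\bfc_2\circ\varphi_1,r_2\circ\varphi_1}$) yields $r_1^2 = (r_2\circ\varphi_1)^2$, and by bijectivity of $\bff$ the other curve gives $\bff\circ\bfc_2 = \bfc_1\circ\varphi_2$ and $r_2^2 = (r_1\circ\varphi_2)^2$, which is case B.

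``$\Longleftarrow$'': Suppose A1--A2 hold. Then C1--C2 of Theorem~\ref{thm:singlespine} hold for the representation $(\bfc_1,r_1)$; repeating verbatim the ``$\Longleftarrow$'' computation there (using Lemma~\ref{ratfunc} to reduce to $r_1 = r_1\circ\varphi_1$ after possibly flipping orientation, then Lemma~\ref{lemma-3} to rewrite $\bff\circ\bfF_{\bfc_1,r_1}(t,s) = \bfF_{\bfc_1\circ\varphi_1,\,r_1\circ\varphi_1}\big(t,\det(\bfQ)s\big)$) shows that $\bff$ maps the parametrized surface $\bfF_{\bfc_1,r_1}$ onto a reparametrization of itself, hence $\bff(\SSS) = \SSS$. (No Dupin-exclusion hypothesis is needed here: the computation in the ``$\Longleftarrow$'' part of Theorem~\ref{thm:singlespine} never used that $\SSS$ fails to be a cyclide.) If instead B1--B2 hold, the same computation applies with the roles of the two representations crossed: from $r_1^2 = (r_2\circ\varphi_1)^2$ and Lemma~\ref{ratfunc} we get $r_1 = \pm\, r_2\circ\varphi_1$, so up to orientation $\tilde r := r_2\circ\varphi_1$ equals $r_1$, and Lemma~\ref{lemma-3} applied with $\tilde\bfc := \bfc_2\circ\varphi_1 = \bff\circ\bfc_1$ gives
\[
\bff\circ\bfF_{\bfc_1,r_1}(t,s) = (\bff\circ\bfc_1)(t) + r_1(t)\,\bfQ\bfN_{\bfc_1,r_1}(t,s) = \tilde\bfc(t) + \tilde r(t)\,\bfN_{\tilde\bfc,\tilde r}\big(t,\det(\bfQ)s\big) = \bfF_{\bfc_2\circ\varphi_1,\,r_2\circ\varphi_1}\big(t,\det(\bfQ)s\big),
\]
which is a reparametrization of $\SSS_{\bfc_2,r_2} = \SSS$. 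Hence $\bff(\SSS) = \SSS$ in this case as well. \qed

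\textbf{Remarks on the main obstacle.} The computational heart of each implication is already isolated in the lemmas of Section~\ref{sec:sym}, so the only genuinely new ingredient is the bookkeeping of the ``swap'' alternative, and this rests entirely on Maxwell's theorem: I must be sure that a symmetry cannot produce a \emph{third} spine curve and that the two spine curves of a cyclide are intrinsically determined (so that $\bff$ is forced to permute them). The one technical point to handle with care is that in case B the map $\varphi_1 = \bfc_2^{-1}\circ\bff\circ\bfc_1$ is well defined and birational: this uses that both $\bfc_1,\bfc_2$ are proper parametrizations, so that $\bfc_2^{-1}$ exists as a rational map and the composition is a birational self-map of $\RR$, i.e.\ a genuine M\"obius transformation in the sense of \eqref{eq:Moebius}. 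Everything else is a transcription of the arguments for Theorem~\ref{thm:singlespine}.
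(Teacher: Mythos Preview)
Your proof is correct and follows essentially the same architecture as the paper's: Lemma~\ref{lemma-1} forces $\bff$ to permute the two spine curves, bijectivity splits into cases A and B, and the ``$\Longleftarrow$'' direction is the same Frenet--normal computation via Lemma~\ref{lemma-3}. The one noteworthy difference is in case~B of ``$\Longrightarrow$'': the paper invokes external results (Theorem~2 in \cite{Alcazar.Hermoso.Muntingh15} and Theorem~9 in \cite{Alcazar.Hermoso.Muntingh16}) to produce the M\"obius transformation linking $\bff\circ\bfc_1$ to $\bfc_2$, whereas you construct it directly as $\varphi_1 = \bfc_2^{-1}\circ\bff\circ\bfc_1$, using properness of both parametrizations to see that this composition is birational on the line. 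Your route is more self-contained and arguably cleaner here; the paper's citations buy generality (they cover similarities and not just isometries) that is not needed for this theorem.
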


\begin{proof}
``$\Longrightarrow$'': By Lemma \ref{lemma-1}, for $i=1,2$ we have that $\bff\circ \bfc_i$ must also be a spine curve of $\SSS$. Suppose $\bff$ maps one of the spine curves of $\SSS$, say $\bfc_1$, to itself. Since $\bff$ is a bijection, it follows that $\bff$ also maps $\bfc_2$ to itself. Thus $\bff$ is a symmetry of both $\bfc_1$ and $\bfc_2$, and by Theorem \ref{thm:diagram} this is equivalent to the existence of M\"obius transformations $\varphi_1$ and $\varphi_2$ for which $\bff\circ \bfc_i = \bfc_i \circ \varphi_i$, $i = 1, 2$, establishing A1. Then, for $i=1,2$, A2 is established as in the implication ``$\Longrightarrow$'' of Theorem \ref{thm:singlespine}. 

Now let $\CCC_1, \CCC_2$ be the curves defined by $\bfc_1, \bfc_2$ and suppose that $\bff$ maps $\CCC_1$ to $\CCC_2$. Then, since $\bff$ is a bijection, it also maps $\CCC_2$ to $\CCC_1$ by Lemma \ref{lemma-1}. In particular $\bff^2$ is a symmetry of both $\CCC_1$ and $\CCC_2$. By Theorem 2 in \cite{Alcazar.Hermoso.Muntingh15} and Theorem 9 in \cite{Alcazar.Hermoso.Muntingh16} there exist M\"obius transformations $\varphi_1,\varphi_2$ such that $\bff\circ \bfc_1 = \bfc_2 \circ \varphi_1$ and $\bff\circ \bfc_2 = \bfc_1 \circ \varphi_2$, establishing B1. Using first Condition B1, then Lemma 4 with $r = r_1$ and $\tilde{r}= \tilde{r}_1 = \pm r_1$, next that $(\bfc_1, r_1)$ and $(\bfc_2,r_2)$ both define $\SSS$, and finally that $\varphi_1$ is a birational map on the real line, one obtains
\[ \SSS_{\bfc_2\circ\varphi_1,\tilde{r}_1}
 = \SSS_{\bff\circ \bfc_1,\tilde{r}_1}
 = \SSS_{\bfc_1,r_1}
 = \SSS_{\bfc_2,r_2}
 = \SSS_{\bfc_2\circ\varphi_1,r_2\circ\varphi_1} \]
and deduces $r^2_1 = \tilde{r}^2_1 = (r_2\circ \varphi_1)^2$, establishing B2.

``$\Longleftarrow$'': Let $i,j \in \{1, 2\}$ and suppose $i=j$ (resp. $i\neq j$). Let $\bfF_{\bfc_i,r_i}(t,s)$ be the parametrization \eqref{eq:CanalSurfaceParametrization} of $\SSS$ with normals $\bfN_{\bfc_i,r_i}(t,s)$ as in \eqref{eq:NormalsFrenet}. Since the radius function $r_i$ and $\tilde{r}_j := r_j\circ \varphi_i$ are real rational functions satisfying $r^2_i=\tilde{r}_j^2$ by A2 (resp. B2), Lemma \ref{ratfunc} implies $r_i=\pm\tr_j$. As a change of sign of the radius function results in a change of orientation of the canal surface and leaves the geometric shape unchanged, we may safely assume that $r_i=\tr_j$. Then using Lemma \ref{lemma-3} with $\bfc = \bfc_i$, $\tilde{\bfc}= \tbfc_j := \bfc_j \circ \varphi_i = \bff \circ \bfc_i$, $r = r_i = \tilde{r}_j$, we get $\bfQ\bfN_{\bfc_i,r_i}(t,s)=\bfN_{\tbfc_j, \tr_j} \big(t, \det(\bfQ) s\big)$, and the proof proceeds as in the implication ``$\Longleftarrow$'' of Theorem \ref{thm:singlespine}. 
\end{proof}

\begin{remark}
Notice that $\varphi_1$ is typically not equal to $\varphi_2$. By Theorem \ref{thm:diagram}, Case~A of Theorem \ref{thm:MainTheorem} implies that $\bff$ is a symmetry of each spine curve of $\SSS$; in particular, in this case $\bff$ maps each spine curve to itself. Case B implies that the spine curves $\CCC_1$ and $\CCC_2$ are mapped to each other by an isometry $\bff$ that is not a symmetry of either $\CCC_1$ or $\CCC_2$. Nevertheless, $\bff^2$ is a symmetry of both $\CCC_1$ and $\CCC_2$ with associated M\"obius transformations $\varphi_2\circ \varphi_1$ and $\varphi_1\circ\varphi_2$.
\end{remark}

Analogous to the method in Section \ref{sec:algorithmsingle}, we can use Conditions A2 and B2 to compute the symmetries of the surface. Again introducing the auxiliary variable $u$ satisfying $u - \varphi(t) = 0$, Conditions A2 and B2 of Theorem \ref{thm:MainTheorem} take the form $r^2_i(t) - r^2_j(u) = 0$, with $(i,j) = (1,1), (2,2)$ for Condition A2 and $(i,j) = (1,2), (2,1)$ for Condition B2. Clearing denominators yields corresponding polynomial conditions
\begin{equation}\label{eq:R-pol}
R_{ij}(t,u) := A^2_i(t) B^2_j(u) - A^2_j(u) B^2_i(t) = 0,\qquad i,j=1,2,
\end{equation}
where $r_1 = A_1/B_1$ and $r_2 = A_2/B_2$, with $(A_1, B_1)$ and $(A_2,B_2)$ pairs of coprime univariate polynomials. The following proposition, which has a proof analogous to that of Proposition \ref{thm:FdividesR}, shows how the tentative M\"obius transformations can be determined in the form of M\"obius-like factors of the polynomials $R_{ij}$.

\begin{proposition}\label{thm:FdividesR2} For $i, j = 1,2$, let $r_i = A_i/B_i$ be real rational functions, with corresponding bivariate polynomials $R_{ij}$ as in \eqref{eq:R-pol}, and let $\varphi_1,\varphi_2$ be M\"obius transformations with corresponding M\"obius-like polynomials $F_1, F_2$. Then
\begin{itemize}
\item[A2] holds precisely when $F_1$ divides $R_{11}$ and $F_2$ divides $R_{22}$;
\item[B2] holds precisely when $F_1$ divides $R_{12}$ and $F_2$ divides $R_{21}$.
\end{itemize}
\end{proposition}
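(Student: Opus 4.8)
The plan is to mirror the proof of Proposition~\ref{thm:FdividesR} verbatim for each of the four polynomials $R_{ij}$, since the algebraic content is identical and only the bookkeeping of indices changes. Concretely, for the case A2 I would argue as follows. Fix $i\in\{1,2\}$ and set $j=i$. The identity $r_i^2(t)-r_i^2\big(\varphi_i(t)\big)=0$ holds for all $t$ in the common domain if and only if, after writing $r_i=A_i/B_i$ with $A_i,B_i$ coprime and clearing the nonvanishing denominator $B_i^2(t)B_i^2\big(\varphi_i(t)\big)$, the relation $R_{ii}\big(t,\varphi_i(t)\big)=0$ holds identically in $t$. In that case the graph $\{(t,\varphi_i(t))\}$ of $\varphi_i$ is contained in the zeroset of $R_{ii}$; but this graph is precisely the zeroset of the M\"obius-like polynomial $F_i$, and $F_i$ is irreducible (it is the clearing-denominators form of $u-\varphi_i(t)=0$ with $\alpha\delta-\beta\gamma\neq 0$). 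A subvariety of codimension one defined by an irreducible polynomial being contained in the zeroset of $R_{ii}$ forces $F_i\mid R_{ii}$ in the polynomial ring. Conversely, if $F_i\mid R_{ii}$ then $R_{ii}$ vanishes on the graph of $\varphi_i$, hence $r_i^2(t)=r_i^2\big(\varphi_i(t)\big)$ wherever both sides are defined, which is exactly Condition~A2 for index $i$. Doing this for $i=1$ and $i=2$ gives the first bullet.

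For the case B2 the argument is word for word the same, except that now I take $(i,j)=(1,2)$ for $\varphi_1$ and $(i,j)=(2,1)$ for $\varphi_2$. The equation $r_1^2(t)-r_2^2\big(\varphi_1(t)\big)=0$ holds identically iff, after clearing the denominator $B_1^2(t)B_2^2\big(\varphi_1(t)\big)$, the polynomial $R_{12}\big(t,\varphi_1(t)\big)=A_1^2(t)B_2^2\big(\varphi_1(t)\big)-A_2^2\big(\varphi_1(t)\big)B_1^2(t)$ vanishes identically, which as before is equivalent to $F_1\mid R_{12}$ because $F_1$ is irreducible and cuts out the graph of $\varphi_1$. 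Symmetrically $r_2^2=(r_1\circ\varphi_2)^2$ is equivalent to $F_2\mid R_{21}$. Conjoining the two equivalences yields the second bullet.

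A small point worth a sentence in the write-up: one should note that the denominator $B_i^2(t)B_j^2\big(\varphi_i(t)\big)$ is not the zero polynomial (since $B_i\neq 0$ and $\varphi_i$ is birational, so substitution does not collapse $B_j$ to zero), so clearing it is legitimate and does not introduce or remove solutions on a dense set; this is the same observation implicitly used in Proposition~\ref{thm:FdividesR}. I do not expect any genuine obstacle here — the statement is a routine repackaging of Proposition~\ref{thm:FdividesR} applied four times — so in the paper it is natural to simply say ``the proof is analogous to that of Proposition~\ref{thm:FdividesR}, applied to each pair $(i,j)$,'' and at most spell out the index substitutions. If anything needs care, it is only making sure the reader sees that the \emph{cross} terms $R_{12}$ and $R_{21}$ pair with $\varphi_1$ and $\varphi_2$ respectively in the order dictated by Condition~B2, and not the other way around.
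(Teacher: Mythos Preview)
Your proposal is correct and matches the paper's approach exactly: the paper does not spell out a proof but simply states that it is analogous to that of Proposition~\ref{thm:FdividesR}, which is precisely what you do, applying the same graph-inclusion and irreducibility argument to each of the four index pairs $(i,j)$. Your added remarks on the nonvanishing of the denominators and on the correct pairing of $\varphi_1,\varphi_2$ with $R_{12},R_{21}$ are sound and only make the analogy more explicit.
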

 
\begin{example}\label{ex:SymmetriesIII}
In the proof of Theorem \ref{thm:SymmetriesIII}, it is derived that the M\"obius transformations $\varphi_\pm(t) := \pm t$ satisfy Condition B2 in Theorem \ref{thm:MainTheorem} for a Dupin cyclide of Type III with $c = 0$. Let us determine the associated symmetries in Case B using the method in \cite{Alcazar.Hermoso.Muntingh15}.

Let $\bff(\bfx) = \bfQ \bfx + \bfb$, and suppose that $\bff\circ \bfc^\rmIII_1 = \bfc^\rmIII_2 \circ \varphi_\pm$. Then
\begin{equation}\label{eq:fundrel1}
\bfQ f\begin{bmatrix} t^2 - \frac12\\ 2t\\ 0\end{bmatrix} + \bfb
= \bfQ \bfc^\rmIII_1(t) + \bfb
= \bff\circ \bfc^\rmIII_1 (t)
= \bfc^\rmIII_2(\pm t)
= f\begin{bmatrix} \frac12 - t^2\\ 0\\ \pm 2 t\end{bmatrix}.
\end{equation}
Differentiating once and twice, evaluating at $t = 0$, and taking the cross product,
\begin{equation}\label{eq:findQ}
\bfQ \begin{bmatrix}  0\\ 1\\ 0\end{bmatrix} = \begin{bmatrix}   0\\ 0\\ \pm 1\end{bmatrix},\quad
\bfQ \begin{bmatrix}  1\\ 0\\ 0\end{bmatrix} = \begin{bmatrix} - 1\\ 0\\ 0\end{bmatrix}, \quad
\bfQ \begin{bmatrix} 0 \\ 0\\ 1\end{bmatrix}
= \begin{bmatrix} 0 \\ \pm \det\bfQ \\ 0\end{bmatrix},
\end{equation}
where we used the identity \eqref{eq:cross_product_orthogonal}. Applying the rules of matrix block multiplication to \eqref{eq:findQ}, we obtain
\[ \bfQ 
= \begin{bmatrix} -1 & 0 & 0 \\ 0 & 0 & \pm \det \bfQ\\ 0 & \pm 1 & 0 \end{bmatrix}. \]
Substituting this $\bfQ$ and $t = 0$ in \eqref{eq:fundrel1} yields $\bfb = 0$, and it follows that $\bff(x,y,z) = (-x, \pm \det\bfQ z, \pm y)$. Moreover, $\bff\circ \bfc^\rmIII_2 = \bfc^\rmIII_1 \circ \varphi_{\pm\det\bfQ}$,
so that Conditions B1 and B2 are satisfied for the Cases (i)--(l) in Table \ref{tab:symmetries}.
\end{example}

\subsection{Classification}
In this subsection we apply the preceding results and ideas to classify the symmetries of Dupin cyclides, providing the symmetry groups in each case. For this purpose, we assume each Dupin cyclide to be in canonical form, i.e., with spine curves $\bfc_1^\alpha, \bfc_2^{\alpha}$ and radius functions $r_1^\alpha, r_2^\alpha$ for one of the Types $\alpha = \rmI, \rmII$ or $\rmIII$ as in Table~\ref{tab:canonicalform}, and corresponding implicit equation $F^\alpha$. The results are summarized in Tables \ref{tab:symmetries} and \ref{tab:DupinSymmetryGroups}.

\begin{figure}
\begin{center}\includegraphics[scale=0.25]{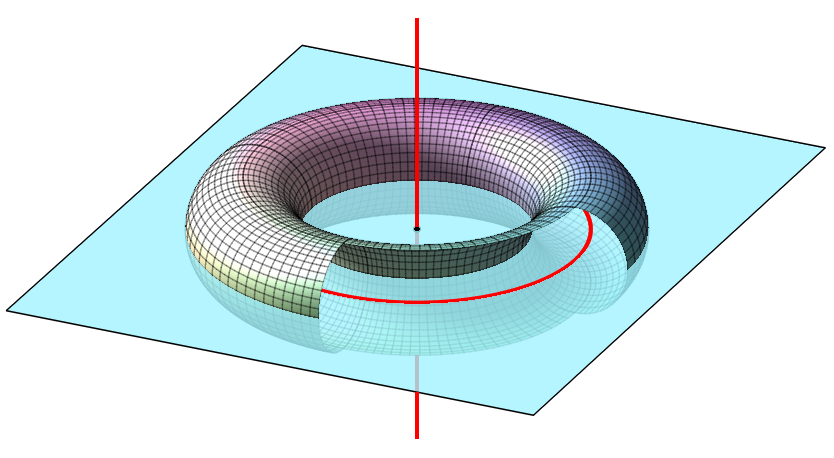}\end{center}
\caption{The Dupin cyclide $\SSS$ of Type I with parameters $a = 2$ and $c = 1$, together with its spine curves, the line $\bfc^\rmI_2$ passing through the barycenter of the circle $\bfc^\rmI_1$. The axis of the rotational symmetry coincides with the spine curve $\bfc^\rmI_2$, and the plane of $\bfc^\rmI_1$ corresponds to a mirror symmetry of $\SSS$.}\label{fig:torus}
\end{figure}

\subsubsection{Type I}
Let us address first the symmetries of the Dupin cyclides of Type I, i.e., tori. This can be deduced from results on symmetries of surfaces of revolution \cite[\S 2.2.4]{AHM15}. However, we will show that the results in this paper can also be used to easily derive these symmetries.

Even though one of the spine curves of the Dupin cyclide of Type I is a line, the following theorem shows that Theorem \ref{thm:MainTheorem} extends to these surfaces as well and explicitly describes the symmetries and associated M\"obius transformations.

\begin{theorem}\label{thm:SymmetriesI}
The isometry $\bff(\bfx) = \bfQ\bfx + \bfb$ is a symmetry of the Dupin cyclide $\SSS$ of Type I in Table \ref{tab:canonicalform} if and only if there exist M\"obius transformations $\varphi_1, \varphi_2$ such that Conditions A1 and A2 in Theorem \ref{thm:MainTheorem} hold. Moreover, with signs $\varepsilon_1,\varepsilon_2 \in \{-1, 1\}\simeq \ZZ_2$ and angle $\theta\in [0, 2\pi) \simeq \SS^1$, these M\"obius transformations take the form
\begin{equation}\label{eq:torus-Moebius} \varphi_1(t) = -\left(\frac{\cos(\theta/2)t + \sin(\theta/2)}{\sin(\theta/2)t-\cos(\theta/2) }\right)^{\varepsilon_2}, \qquad \varphi_2(t) = \varepsilon_1 t,
\end{equation}
with associated symmetries
\begin{equation}\label{eq:f_Dupin1}
\bff(\bfx) = \bfQ\bfx,\qquad \bfQ = 
\begin{bmatrix} \varepsilon_2 \cos\theta & -\varepsilon_2\sin\theta & 0 \\ \sin \theta & \cos\theta & 0\\ 0 & 0 & \varepsilon_1 \end{bmatrix}
\end{equation}
forming a symmetry group isomorphic to $\ZZ_2^2 \times \SS^1$.
\end{theorem}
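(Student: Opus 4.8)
The plan is to treat the torus of Type I directly, exploiting the fact that one spine curve is a circle and the other a line, so that Case B of Theorem \ref{thm:MainTheorem} cannot occur (a circle and a line are not isometric) and only Case A is possible. First I would verify that, despite the linearity of $\bfc_2^\rmI$, the forward and backward implications of Theorem \ref{thm:MainTheorem} still go through: the proof of the forward direction used that $\bff$ maps each spine curve of $\SSS$ to a spine curve (Lemma \ref{lemma-1}), and that a symmetry of a non-linear curve corresponds to a M\"obius transformation via Theorem \ref{thm:diagram}; for the line $\bfc_2^\rmI$, symmetries that preserve it still correspond to M\"obius reparametrizations, as the rational parametrization $\bfc_2^\rmI(t) = a(0,0,2t/(1-t^2))$ is proper, so Theorem \ref{thm:diagram} (which the excerpt states holds in any dimension and in the real-analytic setting) applies. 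The backward direction is unchanged, since Lemma \ref{lemma-3} and the argument of Theorem \ref{thm:singlespine} never used non-linearity of the second spine curve. Hence $\bff$ is a symmetry of $\SSS$ iff A1 and A2 hold for some $\varphi_1,\varphi_2$.

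Next I would pin down the form of $\varphi_1$ and $\varphi_2$. For the circle $\bfc_1^\rmI(t) = a\big((1-t^2)/(1+t^2), 2t/(1+t^2), 0\big)$ with constant radius $r_1^\rmI = c$, Condition A2 for $r_1$ is automatic, so A1 forces $\bff$ to restrict to a symmetry of a circle of radius $a$ in the $z = 0$ plane. The isometries of $\RR^3$ preserving this circle are exactly the orthogonal transformations fixing the plane setwise (possibly flipping it) composed with rotations/reflections of the circle; writing the induced M\"obius transformation on the standard rational parametrization of the circle gives the one-parameter family with a sign, which is precisely $\varphi_1$ as displayed in \eqref{eq:torus-Moebius} — the $\theta$ recording the rotation angle within the plane and $\varepsilon_2$ recording whether the circle's orientation is preserved. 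For the line $\bfc_2^\rmI$, Condition A2 with $r_2^\rmI(t) = c - a(1+t^2)/(1-t^2)$ is the genuine constraint: I would substitute $u = \varphi_2(t)$ into $r_2^2(t) = r_2^2(u)$, clear denominators, and solve for the M\"obius-like factors, which (after a short computation analogous to Example \ref{ex:crunode}) yields only $\varphi_2(t) = \pm t$, i.e. $\varphi_2(t) = \varepsilon_1 t$. Then I would recover $\bfQ$ from A1 by the standard device of differentiating $\bff\circ\bfc_i^\alpha = \bfc_i^\alpha\circ\varphi_i$ at a convenient parameter and reading off the action of $\bfQ$ on a basis (as in Example \ref{ex:SymmetriesIII}), obtaining the block-diagonal matrix \eqref{eq:f_Dupin1}; substituting back shows $\bfb = 0$. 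Conversely, every such $(\varphi_1,\varphi_2)$ manifestly satisfies A1 and A2, so the list is exactly the symmetry set.

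Finally, for the group structure I would note that the map $(\varepsilon_1,\varepsilon_2,\theta) \mapsto \bfQ$ in \eqref{eq:f_Dupin1} is a bijection onto the symmetry group, and compute the composition law: the lower-right entry multiplies the $\varepsilon_1$'s; the upper-left $2\times 2$ block is $\mathrm{diag}(\varepsilon_2, 1)\cdot\mathrm{Rot}(\theta)$ up to the obvious rearrangement, and composing two such blocks multiplies the $\varepsilon_2$'s while the angles add with a sign twist governed by $\varepsilon_2$ — but since $\mathrm{Rot}(\theta)$ commutes with itself and the reflection conjugates $\mathrm{Rot}(\theta)$ to $\mathrm{Rot}(-\theta)$, the pair $(\varepsilon_2,\theta)$ multiplies as in the orthogonal group $O(2)$. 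However, $O(2)$ is \emph{not} $\ZZ_2\times\SS^1$, so at this point I must be more careful: the claimed isomorphism $\ZZ_2^2\times\SS^1$ forces the $\varepsilon_2$ and $\theta$ components to decouple, which happens precisely because of the specific parametrization chosen — I expect this to be the main obstacle. The resolution is that the reflection $\varepsilon_2 = -1$ in \eqref{eq:f_Dupin1} acts on the circle's angular coordinate, and one checks that combining it with a half-turn-type element of the torus's symmetry realizes the splitting; alternatively, the correct reading is that $\varepsilon_1$ contributes one $\ZZ_2$, and the plane-reflection together with the rotation family yield $\ZZ_2 \times \SS^1$ only after identifying the reflection as acting trivially on $\SS^1$ modulo the rotation subgroup, i.e. by choosing the reflection axis appropriately. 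I would carry out this last verification explicitly by exhibiting generators — the half-turn $\mathrm{diag}(-1,-1,1)$, the $z$-reflection $\mathrm{diag}(1,1,-1)$, and the rotation family $\mathrm{Rot}(\theta)$ about the $z$-axis — checking they commute and that the rotation family is a normal $\SS^1$ complemented by the $\ZZ_2^2$ generated by the two involutions, which establishes the isomorphism with $\ZZ_2^2\times\SS^1$.
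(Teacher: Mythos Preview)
Your overall strategy coincides with the paper's: rule out Case B since a circle and a line are not isometric, use Condition A2 on $r_2^\rmI$ (via the factorization of $R_{22}^\rmI$) to obtain $\varphi_2(t)=\varepsilon_1 t$, note that A2 on the constant $r_1^\rmI$ is vacuous, and identify the symmetries of the circle with $O(2)$ acting in the plane $z=0$ to arrive at \eqref{eq:f_Dupin1}. Two points need correction.

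First, a minor one: you cannot invoke Lemma \ref{lemma-3} for the backward direction as you do, because that lemma passes through the Frenet-frame normal parametrization \eqref{eq:NormalsFrenet}, which is undefined when the spine curve is a line. The paper sidesteps this by directly verifying that every $\bff$ of the form \eqref{eq:f_Dupin1} leaves $F^\rmI$ invariant; alternatively you can run the backward argument using only the non-linear spine $\bfc_1^\rmI$.

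Second, and more substantially, your group-structure argument does not close. Your instinct is correct: the upper-left $2\times 2$ block of \eqref{eq:f_Dupin1} ranges over all of $O(2)$, which is non-abelian and hence not abstractly isomorphic to $\ZZ_2\times\SS^1$. But your proposed fix fails: the generators you exhibit do not produce the full symmetry group, since $\mathrm{diag}(-1,-1,1)$ is the rotation by $\pi$ and already lies in your rotation family $\mathrm{Rot}(\theta)$; the elements with $\varepsilon_2=-1$ (reflections in planes containing the $z$-axis) are entirely absent from the subgroup you describe, which is only $\ZZ_2\times SO(2)$. The resolution is notational rather than structural: the caption of Table \ref{tab:DupinSymmetryGroups} explicitly identifies the symbol ``$\ZZ_2\times\SS^1$'' with the subgroup of $O(3)$ consisting of rotations about the $z$-axis together with reflections in planes through it, i.e.\ with $O(2)$ itself. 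Thus the paper's ``$\ZZ_2^2\times\SS^1$'' is shorthand for $\ZZ_2\times O(2)$ as a parametrized family, not a claim of abstract isomorphism with a direct product of abelian groups, and the paper's own proof accordingly offers no abstract-group argument beyond recording the parametrization \eqref{eq:f_Dupin1}.
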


\begin{proof}
We first determine the isometries $\bff$ and associated M\"obius transformations $\varphi_1, \varphi_2$ satisfying Conditions A1, A2. Substituting $r^\rmI_2$ into \eqref{eq:R-pol} yields
\begin{align*}
R^\rmI_{22}(t,u) & = 4a(u + t)(u - t) (t^2u^2a + t^2u^2c - t^2c - u^2c - a + c).
\end{align*}
Since $a,c\neq 0$, the right-most factor in $R^\rmI_{22}$ does not split, and the only M\"obius-like factors are $u-t$ and $u+t$, corresponding to M\"obius transformations $\varphi_2 = \pm t$ satisfying Condition A2 by Proposition \ref{thm:FdividesR2}. The relation $\bff \circ \bfc_2 = \bfc_2 \circ \varphi_2$ implies that $\bff$ satisfies $\bff(0,0,z) = (0,0,\pm z)$ for any point $(0,0,z)$ on the line $\bfc^\rmI_2$, i.e.,
\[
\bff(x,y,z) =
\left[ \begin{array}{c|c}
\widehat{\bfQ} & 0\\ \hline
0 & \pm 1
\end{array} \right]
\begin{bmatrix} x\\ y \\ z\end{bmatrix},\qquad
\widehat{\bfQ}\widehat{\bfQ}^{\rmT} = \bfI.
\]

On the other hand, since $r^\rmI_1$ is constant, Condition A2 does not determine (or even restrict) $\varphi_1$. However, it is well known that any orthogonal matrix $\widehat{\bfQ}\in \RR^{2,2}$ maps the circle to itself. This can be shown explicitly using the trigonometric reparametrization $\bfc^\rmI_1 \big(\tan(\phi/2)\big) = a(\cos \phi, \sin\phi, 0)$ and representation
\[
\widehat{\bfQ} \in \left\{
\begin{bmatrix} \varepsilon_2 \cos\theta & - \varepsilon_2 \sin\theta\\ \sin \theta & \cos\theta \end{bmatrix}
\,:\, \varepsilon_2\in \{\pm 1\},\ \theta\in [0,2\pi)\right\}. \]
Thus $\bff$ necessarily takes the form \eqref{eq:f_Dupin1} and corresponds to the reparametrizations $\phi\longmapsto \phi + \theta$ and $\phi \longmapsto \pi - \phi - \theta$, or equivalently to the M\"obius transformations \eqref{eq:torus-Moebius}. A direct calculation shows that such $\varphi_1, \varphi_2$ and $\bff$ satisfy Conditions A1 and A2.

``$\Longrightarrow$'': Using in addition that the circle $\bfc_1^\rmI$ and line $\bfc_2^\rmI$ are not related by an isometry, this is established as in Theorem \ref{thm:MainTheorem}.

``$\Longleftarrow$'': Using the above explicit form of $\bff$, a straightforward calculation shows that these indeed are symmetries of $\SSS$.
\end{proof}

\begin{remark}
Notice that the matrix $\bfQ$ in \eqref{eq:f_Dupin1} can be decomposed as a product 
\[
\begin{bmatrix} \varepsilon_2 \cos\theta & -\varepsilon_2\sin\theta & 0 \\ \sin \theta & \cos\theta & 0\\ 0 & 0 & \varepsilon_1 \end{bmatrix}=\begin{bmatrix} \varepsilon_2  & 0 & 0 \\ 0 & 1 & 0\\ 0 & 0 & 1 \end{bmatrix}\begin{bmatrix}  \cos\theta & -\sin\theta & 0 \\ \sin \theta & \cos\theta & 0\\ 0 & 0 & 1 \end{bmatrix}\begin{bmatrix} 1  & 0 & 0 \\ 0 & 1 & 0\\ 0 & 0 & \varepsilon_1 \end{bmatrix}.
\]
This shows that the symmetries of cyclides of Type I are the rotations about the line $\bfc_2^\rmI$, together with the composition of these rotations with the reflection in the plane containing the circle $\bfc_1^\rmI$ and$/$or with the reflection in any plane containing the line $\bfc_2^\rmI$.
\end{remark}

\begin{table}
\begin{tabular*}{\columnwidth}{@{}@{\extracolsep{\stretch{1}}}*{3}{c}p{0.388\columnwidth}c@{}}
\toprule
Case A & $(\varphi_1, \varphi_2)$ & $\bff(x,y,z)$ & description of the symmetry & Type\\
\midrule
(a) & $(+t, +t)$ & $(+x,+y,+z)$ & trivial symmetry & II, III\\
(b) & $(+t, -t)$ & $(+x,+y,-z)$ & reflection in the plane $\Pi_1$ & II, III\\
(c) & $(-t, +t)$ & $(+x,-y,+z)$ & reflection in the plane $\Pi_2$ & II, III\\
(d) & $(-t, -t)$ & $(+x,-y,-z)$ & half-turn about the line $\Pi_1\cap \Pi_2$ & II, III\\ \midrule
(e) & $(+\frac{1}{t}, -\frac{1}{t})$ & $(-x,+y,+z)$ & reflection in the plane $\Pi_0$ & II, $c = 0$\vspace{0.2em}\\ 
(f) & $(+\frac{1}{t}, +\frac{1}{t})$ & $(-x,+y,-z)$ & half-turn about the line $\Pi_0\cap \Pi_1$ & II, $c = 0$\vspace{0.2em}\\ 
(g) & $(-\frac{1}{t}, -\frac{1}{t})$ & $(-x,-y,+z)$ & half-turn about the line $\Pi_0\cap \Pi_2$ & II, $c = 0$\vspace{0.2em}\\
(h) & $(-\frac{1}{t}, +\frac{1}{t})$ & $(-x,-y,-z)$ & central symmetry about $O$ & II, $c = 0$ \\
\midrule
Case B & $(\varphi_1, \varphi_2)$ & $\bff(x,y,z)$ & description of the symmetry & Type\\
\midrule
(i) & $(+t, +t)$ & $(-x, +z, +y)$ & half-turn about the line $\Pi_0\cap \Pi_3$ & III, $c = 0$\\
(j) & $(-t, -t)$ & $(-x, -z, -y)$ & half-turn about the line $\Pi_0\cap \Pi_4$ & III, $c = 0$\\
(k) & $(+t, -t)$ & $(-x, -z, +y)$ & composition of a reflection in the plane $\Pi_0$ and a quarter-turn about the line $\Pi_1\cap\Pi_2$ & III, $c = 0$\\
(l) & $(-t, +t)$ & $(-x, +z, -y)$ & composition of a reflection in the plane $\Pi_0$ and a quarter-turn about the line $\Pi_1\cap\Pi_2$ & III, $c = 0$\\
\bottomrule
\end{tabular*}
\caption{For the Types II and III of Dupin cyclides in Table \ref{tab:canonicalform}, the table lists the discrete symmetries $\bff$ and M\"obius transformations $(\varphi_1, \varphi_2)$ associated with $\bff$ via Case A and Case~B in Theorem~\ref{thm:MainTheorem}. Here $\Pi_0: x = 0$ is the plane through $O := (0,0,0)$, the barycenter of the ellipse and hyperbola and average of the foci (which are, as a set, also the vertices) of the parabolas, and perpendicular to the planes $\Pi_1, \Pi_2$ containing the spine curves $\bfc_1,\bfc_2$; $\Pi_3 : z - y = 0, \Pi_4 : z + y = 0$ are the bisector planes of $\Pi_1, \Pi_2$.}\label{tab:symmetries}
\end{table}

\begin{table}
\begin{tabular*}{\columnwidth}{l@{\extracolsep{\stretch{1}}}*{4}{c}}
\toprule
Type & graphic & parameters & symmetries & group\\
\midrule
I: Quartic
& \raisebox{-1em}{\includegraphics[scale=0.07]{Torus}}
& \text{all} & $\{$(a),(b)$\}\times \ZZ_2\times \SS^1$ & $\ZZ_2^2 \times \SS^1$ \\
\midrule
II: Quartic
& \raisebox{-1em}{\includegraphics[scale=0.05]{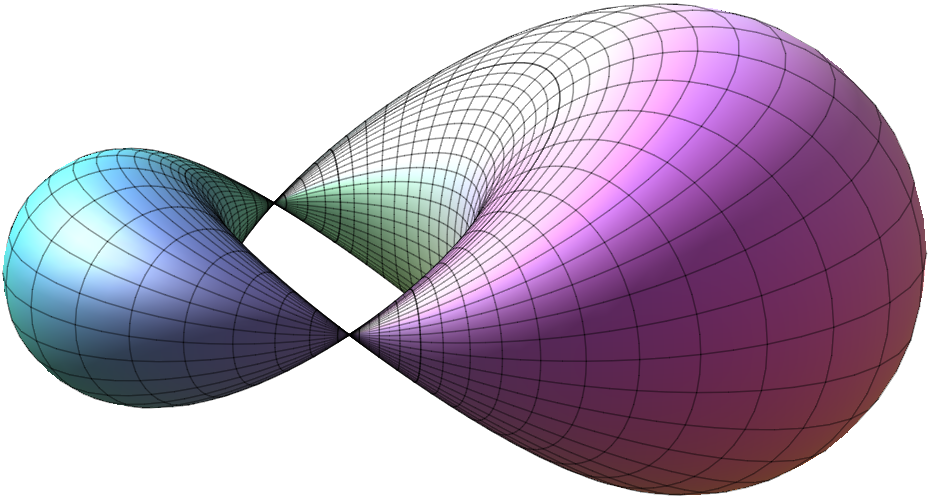}}
& $c\neq 0$ & (a)--(d) & $\ZZ_2^2$\\
& \raisebox{-1em}{\includegraphics[scale=0.05]{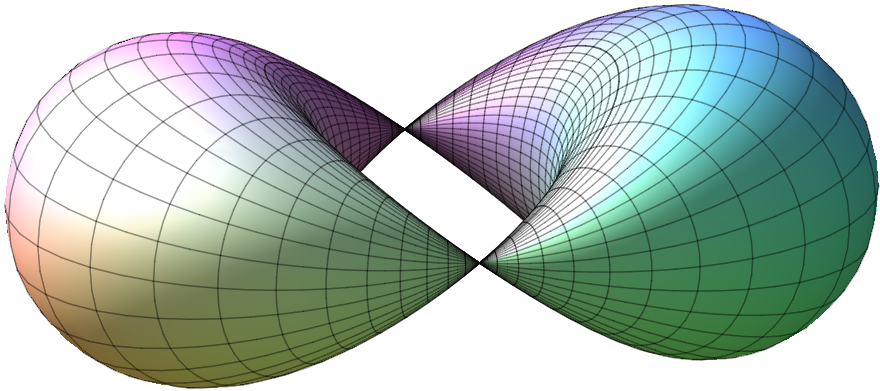}}
& $c  =  0$ & (a)--(h) & $\ZZ_2^3$\\
\midrule
III: Cubic
& \raisebox{-1em}{\includegraphics[scale=0.055]{Symmetric_cubic_cyclide_1_03}}
& $c\neq 0$ & (a)--(d) & $\ZZ_2^2$ \\
& \raisebox{-1em}{\includegraphics[scale=0.055]{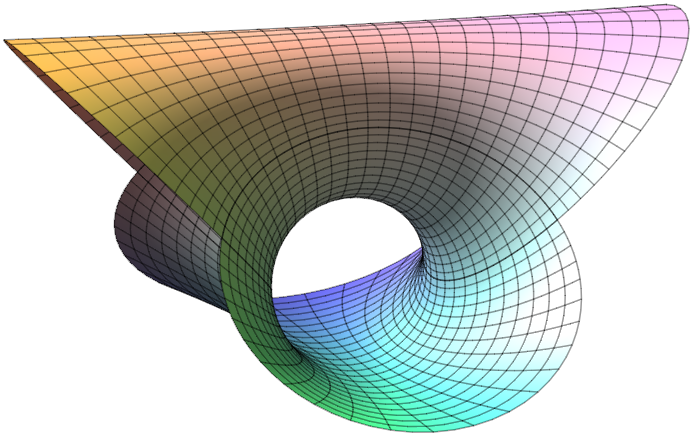}}
& $c  =  0$ & (a)--(d), (i)--(l) & $D_4$ \\
\bottomrule
\end{tabular*}
\caption{The symmetries and symmetry groups for the Dupin cyclides of Types I, II, III in Table \ref{tab:canonicalform} with parameters $a,b,c,f,g$. Here $\ZZ_2\times \SS^1$ is identified with the subgroup of $O(3)$ of rotations about the $z$-axis and reflections in planes containing the $z$-axis.}\label{tab:DupinSymmetryGroups}
\end{table}

\subsubsection{Type II}
Now we apply Theorem \ref{thm:MainTheorem} to derive the symmetries of Dupin cyclides of Type II. Every such Dupin cyclide has an ellipse and hyperbola as its spine curves, which are not related by an isometry. It follows that for these Dupin cyclides Case B in Theorem \ref{thm:MainTheorem} cannot happen. We distinguish cases according to whether the parameter $c$ vanishes; see Figure \ref{fig:TypeIIsuper}.

\begin{theorem}\label{thm:SymmetriesII}
For any Dupin cyclide $\SSS$ of Type II in Table \ref{tab:canonicalform}, Conditions A1 and A2 in Theorem \ref{thm:MainTheorem} are satisfied if and only if: 
\begin{itemize}
\item $c\neq 0$ and $(\varphi_1, \varphi_2)$ and $\bff$ are given by Cases (a)--(d) in Table \ref{tab:symmetries}, forming a symmetry group isomorphic to $\ZZ_2^2$, the Klein four group;
\item $c  =  0$ and $(\varphi_1, \varphi_2)$ and $\bff$ are given by Cases (a)--(h) in Table \ref{tab:symmetries}, forming a symmetry group isomorphic to $\ZZ_2^3$, the elementary abelian group of order~8.
\end{itemize}
\end{theorem}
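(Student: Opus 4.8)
The plan is to apply Theorem~\ref{thm:MainTheorem} directly, which for Type~II reduces to Case~A only, since the spine curves are an ellipse and a hyperbola and hence are never related by an isometry. Thus an isometry $\bff$ is a symmetry of $\SSS$ precisely when there exist M\"obius transformations $\varphi_1,\varphi_2$ satisfying Conditions A1 and A2. The strategy is to first extract \emph{all} candidate $\varphi_1,\varphi_2$ from Condition A2 by computing the M\"obius-like factors of the polynomials $R^\rmII_{11}$ and $R^\rmII_{22}$ from \eqref{eq:R-pol} (via Proposition~\ref{thm:FdividesR2}), then for each surviving pair to determine the associated isometry $\bff$ by the differentiation-at-a-point technique illustrated in Example~\ref{ex:crunode} and Example~\ref{ex:SymmetriesIII}, and finally to verify Condition A1 by direct substitution. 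The case split on $c$ will arise naturally: when $c=0$ the polynomial $R^\rmII_{11}$ acquires extra M\"obius-like factors (the transformations $t\mapsto \pm 1/t$), producing the four additional symmetries (e)--(h).

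The key computational steps are as follows. First I would substitute $r^\rmII_1(t) = c - f\frac{1-t^2}{1+t^2}$ into \eqref{eq:R-pol} to obtain $R^\rmII_{11}(t,u)$; clearing the denominators $(1+t^2)(1+u^2)$ one gets a polynomial that factors as a constant multiple of $(u-t)(u+t)$ times a quadratic-in-each-variable factor, \emph{unless} $c = 0$, in which case the structure $r^\rmII_1(t)^2 = f^2\big(\frac{1-t^2}{1+t^2}\big)^2$ is invariant under $t\mapsto 1/t$ (since $\frac{1-(1/t)^2}{1+(1/t)^2} = -\frac{1-t^2}{1+t^2}$) and $t\mapsto -t$, yielding the extra factors $ut-1$ and $ut+1$. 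Since $f>0$ and $a,b\neq 0$, one checks the remaining factors are irreducible quadratics with no M\"obius-like (bilinear irreducible) divisors. Similarly $R^\rmII_{22}(t,u)$, coming from $r^\rmII_2(t) = c - a\frac{1+t^2}{1-t^2}$, has only $u\mp t$ as M\"obius-like factors regardless of $c$ (here $a\neq0$ prevents further splitting, and $\frac{1+(1/t)^2}{1-(1/t)^2} = -\frac{1+t^2}{1-t^2}$ does \emph{not} preserve the square unless one also kills $c$, but even then the factor $u^2t^2-1$ would be needed—this needs a careful check). So $\varphi_2 \in \{+t,-t\}$ always, while $\varphi_1 \in \{+t,-t\}$ when $c\neq 0$ and $\varphi_1\in\{+t,-t,+1/t,-1/t\}$ when $c = 0$; taking all admissible pairs $(\varphi_1,\varphi_2)$ gives exactly the four, resp. eight, rows of Table~\ref{tab:symmetries}.

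Next, for each candidate pair, I would recover $\bff(\bfx) = \bfQ\bfx + \bfb$: from $\bff\circ\bfc^\rmII_i = \bfc^\rmII_i\circ\varphi_i$ evaluate at $t=0$ (and use that $\bfc^\rmII_2$ is defined near $t=0$ as well) to pin down $\bfb = 0$, and differentiate once and twice at $t=0$, together with one cross-product step using \eqref{eq:cross_product_orthogonal} as in \eqref{eq:findQ}, to read off the action of $\bfQ$ on a basis. This yields the diagonal matrices $\mathrm{diag}(\pm1,\pm1,\pm1)$ with the sign pattern recorded in the $\bff(x,y,z)$ column of Table~\ref{tab:symmetries}; the four transformations $(\pm t,\pm t)$ give $(\pm x \text{ fixed}, \pm y, \pm z)$ and the four with $\varphi_1 = \pm1/t$ give the sign flip in the first coordinate. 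One then verifies Condition A1 holds for each by direct substitution into the parametrizations in Table~\ref{tab:canonicalform}, and checks $\bff$ indeed leaves the zeroset of $F^\rmII$ invariant (the ``$\Longleftarrow$'' direction, a routine substitution). Finally, identifying the group: the four maps (a)--(d) are $\{\mathrm{id}, \mathrm{diag}(1,1,-1),\mathrm{diag}(1,-1,1),\mathrm{diag}(1,-1,-1)\}$, each of order $\le 2$ and pairwise commuting, so the group is $\ZZ_2^2$; when $c=0$ adjoining $\mathrm{diag}(-1,1,1)$ (case (e)) gives all eight sign patterns with an even-or-any number of $-1$'s in coordinates $2,3$ and free sign in coordinate $1$—explicitly $\{\mathrm{diag}(\varepsilon_0,\varepsilon_1,\varepsilon_2): \varepsilon_i\in\{\pm1\}\}$, which is $\ZZ_2^3$.

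The main obstacle I expect is the factorization bookkeeping in Condition A2 when $c=0$: one must be careful that the ``extra'' M\"obius-like factors are \emph{exactly} $t\mapsto \pm1/t$ for $R^\rmII_{11}$ and that \emph{no} new M\"obius-like factors appear in $R^\rmII_{22}$ (so that $\varphi_2$ stays in $\{\pm t\}$ even when $c=0$), because otherwise one would predict more than eight symmetries. This amounts to showing the relevant residual polynomial factors are irreducible of bidegree $(2,2)$ with no bilinear divisor, using the hypotheses $a,b\neq0$, $f>0$, $f^2 = a^2-b^2$. A secondary subtlety is that Theorem~\ref{thm:MainTheorem} is stated for \emph{non-linear} spine curves, which is satisfied here since for Type~II both spine curves are genuine conics (ellipse and hyperbola, not lines), so no analogue of the Type~I workaround is needed; but one should remark explicitly that the ellipse and hyperbola are not isometric, which is what rules out Case~B and is needed in the ``$\Longrightarrow$'' direction exactly as in the proof of Theorem~\ref{thm:MainTheorem}.
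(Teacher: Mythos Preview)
Your overall strategy matches the paper's, but there is a genuine computational error that would make the $c=0$ case come out wrong. You assert that $R^\rmII_{22}$ has only the M\"obius-like factors $u\mp t$ regardless of $c$, so that $\varphi_2\in\{+t,-t\}$ always. This is false: when $c=0$ one has $r^\rmII_2(t)=-a\,\frac{1+t^2}{1-t^2}$, and since $\frac{1+(1/t)^2}{1-(1/t)^2}=-\frac{1+t^2}{1-t^2}$ the square $r^\rmII_2(t)^2$ \emph{is} invariant under $t\mapsto\pm 1/t$. Concretely,
\[
R^\rmII_{22}(t,u)\big|_{c=0}=4a^2(u-t)(u+t)(ut-1)(ut+1),
\]
so $\varphi_2$ also ranges over $\{\pm t,\pm 1/t\}$ when $c=0$. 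Your own parenthetical ``this needs a careful check'' flags exactly the step that fails.

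The consequence is fatal for the count. Inspecting Table~\ref{tab:symmetries}, the four extra symmetries (e)--(h) all have $\varphi_2\in\{+1/t,-1/t\}$, so none of them lies in your candidate set $\{\pm t,\pm 1/t\}\times\{\pm t\}$. If you run your recovery-of-$\bff$ step on a pair such as $(\varphi_1,\varphi_2)=(1/t,\,t)$, the ellipse relation forces $\bfQ\bfe_1=-\bfe_1$ while the hyperbola relation (identity on $\bfc^\rmII_2$) forces $\bfQ\bfe_1=\bfe_1$; the pair is inconsistent and yields no symmetry. Thus your eight candidate pairs produce only the four symmetries (a)--(d), and you would wrongly conclude the group is $\ZZ_2^2$ even when $c=0$. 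The paper instead obtains $4\times 4=16$ candidate pairs from Condition~A2 when $c=0$ and then shows (via the same differentiation technique you describe) that exactly eight of them satisfy Condition~A1, namely (a)--(h). Once you correct the factorization of $R^\rmII_{22}$, the rest of your outline goes through and coincides with the paper's proof.
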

\begin{proof}
Inserting the radius functions $r^\rmII_1, r^\rmII_2$ from Table \ref{tab:canonicalform} into \eqref{eq:R-pol} yields
\begin{align*}
R^\rmII_{11}(t,u) & = -4f(u-t)(u+t)(cu^2t^2+fu^2t^2+cu^2+ct^2-f+c),\\
R^\rmII_{22}(t,u) & = +4a(u-t)(u+t)(au^2t^2+cu^2t^2-cu^2-ct^2-a+c),
\end{align*}
each of which has the M\"obius-like factors $F_1(t,u) = u-t$ and $F_2(t,u) = u+t$.
In each case the remaining factor has degree two, and whether it splits into two additional M\"obius-like factors
\[ F_3(t,u)=u(\gamma t+\delta)-(\alpha t+\beta),\qquad F_4(t,u)=u(\gamma't+\delta')-(\alpha't+\beta')\]
depends on the parameters $a,c,f$.

In particular for the polynomial $R^\rmII_{11}$, if the remaining factor satisfies
\[cu^2t^2+fu^2t^2+cu^2+ct^2-f+c=u^2\cdot [(c+f)t^2+c]+(ct^2+c-f)=F_3(t,u)F_4(t,u),\]
comparing the coefficients of $u$ on each side yields
\[(\gamma t+\delta)(\alpha't+\beta')=-(\gamma' t+\delta')(\alpha t+\beta),\]
implying that the M\"obius transformations corresponding to $F_3(t,u)$ and $F_4(t,u)$ are opposite. Hence, after an appropriate scaling of $F_3$ or $F_4$, the remaining factor satisfies
\[ u^2\cdot [(c+f)t^2+c]+(ct^2+c-f) = u^2\cdot (\gamma t+\delta)^2 - (\alpha t+\beta)^2.\]
Comparing the coefficients of $t$ and the coefficients of $u^2t$ yields $c = 0$, in which case
\[R^\rmII_{11}(t,u)|_{c = 0} = -4f^2(u-t)(u+t)(ut-1)(ut+1).\]
A similar argument shows that the remaining factor of $R^\rmII_{22}$ only factors when $c = 0$, in which case
\[R^\rmII_{22}(t,u)|_{c = 0} = 4a^2(u-t)(u+t)(ut-1)(ut+1).\]
Thus $R^\rmII_{11}$ and $R^\rmII_{22}$ each determine tentative M\"obius transformations $\varphi_1(t) = t, \varphi_2(t) = -t$, and two additional M\"obius transformations $\varphi_3(t) = 1/t, \varphi_4(t) = -1/t$ if and only if $c = 0$.

When $c\neq 0$, we therefore only get the M\"obius transformations $\varphi_1(t)=t, \varphi_2(t) = -t$,
for both $\bfc^\rmII_1$ and $\bfc^\rmII_2$, which combine in pairs associated with four potential symmetries. As in Example~\ref{ex:SymmetriesIII}, the nature of these symmetries can be determined by using the techniques in \cite{Alcazar.Hermoso.Muntingh15}, yielding Cases (a)--(d) in Table \ref{tab:symmetries}. When $c=0$, we get the M\"obius transformations $\varphi_1,\varphi_2,\varphi_3,\varphi_4$ for both $\bfc^\rmII_1$ and $\bfc^\rmII_2$, which combine in pairs associated with 16 potential symmetries. Proceeding as in Example \ref{ex:SymmetriesIII}, one finds that only 8 of these correspond to a symmetry of~$\SSS$, namely Cases (a)--(h) in Table \ref{tab:symmetries}.

Noting from the explicit representations in Table \ref{tab:symmetries} that the symmetries (b)--(h) have order 2, the second part follows from comparing to a list of groups of small order \cite[p. 85]{Rotman95}.
\end{proof}

\begin{figure}
\begin{center}
\begin{overpic}[scale=0.18,unit=0.005\columnwidth]{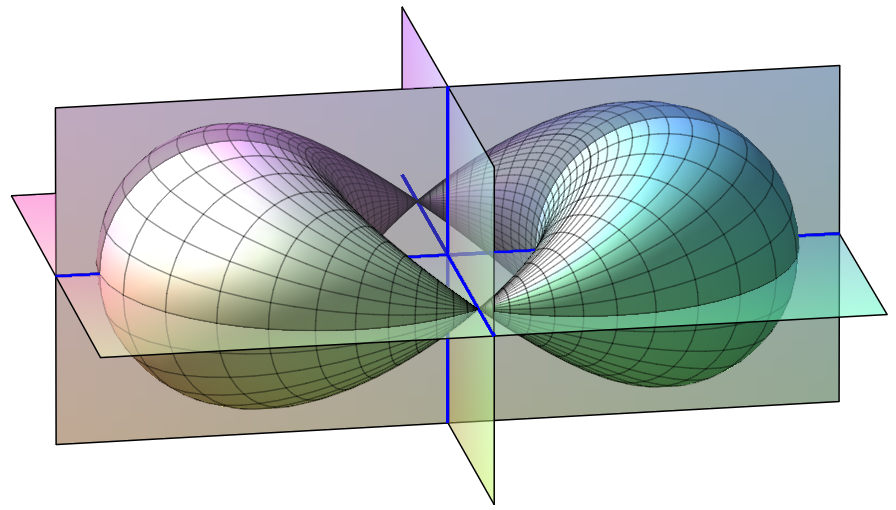}
\put(40,54){$\Pi_0$}
\put(94,19){$\Pi_1$}
\put(89,48){$\Pi_2$}
\end{overpic}\qquad
\begin{overpic}[scale=0.16, unit=0.005\columnwidth]{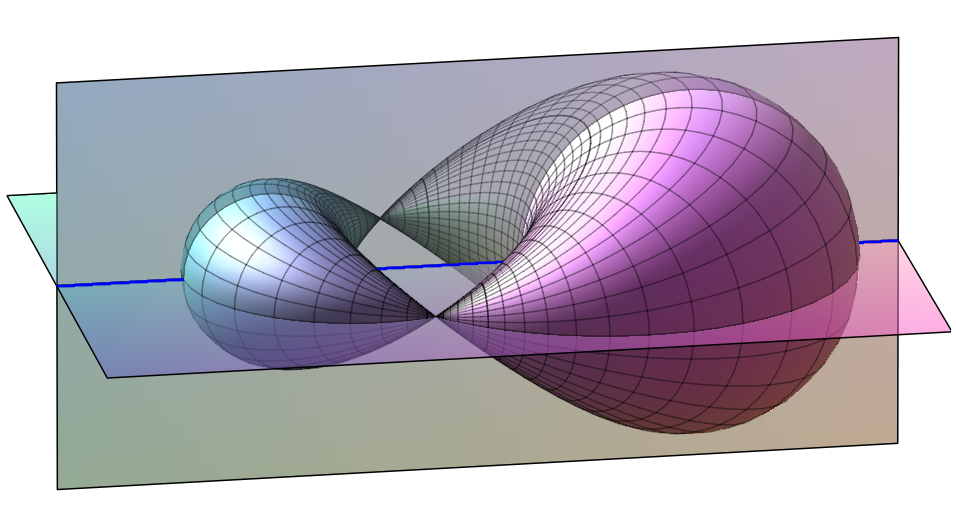}
\put(90,17){$\Pi_1$}
\put(85,47){$\Pi_2$}
\end{overpic}
\end{center}
\caption{The Dupin cyclide of Type II in Table \ref{tab:canonicalform} with parameters $a = 5, b = 4, c = 0, f = 3$ (left) and parameters $a = 5, b = 4, c = -1, f = 3$ (right)}\label{fig:TypeIIsuper}
\end{figure}

\subsubsection{Type III}
Next we apply Theorem \ref{thm:MainTheorem} to derive the symmetries of Dupin cyclides of Type III. Every such Dupin cyclide has parabolas as its spine curves, which might be related by an isometry. Hence, for such Dupin cyclides, it is necessary to analyse Case B of Theorem \ref{thm:MainTheorem} as well. We distinguish cases according to whether the parameter $c$ vanishes; see Figure \ref{fig:TypeIIIsuper}.

\begin{theorem}\label{thm:SymmetriesIII}
For any Dupin cyclide $\SSS$ of Type III in Table \ref{tab:canonicalform}:
\begin{itemize}
\item Conditions A1 and A2 in Theorem \ref{thm:MainTheorem} are satisfied if and only if $(\varphi_1, \varphi_2)$ and $\bff$ are given by Cases (a)--(d) in Table \ref{tab:symmetries}.
\item Conditions B1 and B2 in Theorem \ref{thm:MainTheorem} are satisfied if and only if $c = 0$ and $(\varphi_1, \varphi_2)$ and $\bff$ are given by Cases (i)--(l) in Table \ref{tab:symmetries}.
\end{itemize}
In particular:
\begin{itemize}
\item If $c\neq 0$, the symmetries of $\SSS$ are (a)--(d), forming a group isomorphic to $\ZZ_2^2$, the Klein four group.
\item If $c  =  0$, the symmetries of $\SSS$ are (a)--(d) and (i)--(l), forming a group isomorphic to $D_4$, the dihedral group of order eight.
\end{itemize}
\end{theorem}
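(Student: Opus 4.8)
The plan is to follow the same two-part strategy used for Types I and II: first compute the tentative M\"obius transformations from the radius-function conditions (Case A via the polynomials $R^\rmIII_{11}, R^\rmIII_{22}$, and Case B via $R^\rmIII_{12}, R^\rmIII_{21}$), and then, for each surviving M\"obius transformation, determine the associated isometry and check it against the spine-curve conditions. For Case A, I would insert $r^\rmIII_1(t) = c + g(t^2 + \tfrac12)$ and $r^\rmIII_2(t) = c - g(t^2 + \tfrac12)$ into \eqref{eq:R-pol} and factor $R^\rmIII_{11}$ and $R^\rmIII_{22}$. Since these radius functions are polynomials (not genuinely rational) of even degree with no symmetry other than $t \mapsto -t$, I expect the only M\"obius-like factors to be $u - t$ and $u + t$ regardless of $c$, giving $\varphi_1, \varphi_2 \in \{t, -t\}$ and hence the four combinations (a)--(d). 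The associated isometries are then extracted exactly as in Example~\ref{ex:crunode} and Example~\ref{ex:SymmetriesIII}: differentiate the relation $\bff \circ \bfc^\rmIII_i = \bfc^\rmIII_i \circ \varphi_i$ at $t = 0$, read off $\bfQ$ from its action on the Frenet-type frame $\{\bfc^{\rmIII\prime}_i(0), \bfc^{\rmIII\prime\prime}_i(0), \bfc^{\rmIII\prime}_i(0)\times\bfc^{\rmIII\prime\prime}_i(0)\}$ using \eqref{eq:cross_product_orthogonal}, solve for $\bfb$ by substituting $t = 0$, and verify C1 directly. One must also confirm consistency between the two spine curves — the same $\bff$ must serve both — which pins down the sign choices and yields precisely the four maps $(\pm x, \pm y, \pm z)$ with an even number of minus signs, i.e.\ Cases (a)--(d).

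For Case B, I would compute $R^\rmIII_{12}(t,u) = A_1^2(t)B_2^2(u) - A_2^2(u)B_1^2(t)$. Since the denominators are trivial, this is $\big(c + g(t^2+\tfrac12)\big)^2 - \big(c - g(u^2+\tfrac12)\big)^2$, which factors as $\big[(c+\tfrac{g}{2}) + gt^2 + (c-\tfrac g2) - gu^2\big]\cdot\big[(c+\tfrac g2) + gt^2 - (c-\tfrac g2) + gu^2\big]$; the first bracket is $2c + g(t^2 - u^2)$, the second is $g(t^2 + u^2) + g = g(t^2+u^2+1)$. The M\"obius-like factors of the first come from $2c + g(t^2-u^2) = g(t - u)(t+u) + 2c$, which splits as a product of bilinear forms (each proportional to a $u - \varphi(t)$) only when $c = 0$, giving $\varphi(t) = t$ and $\varphi(t) = -t$; the second bracket $g(t^2+u^2+1)$ never has a real M\"obius-like factor. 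An identical analysis applies to $R^\rmIII_{21}$. Thus Case B can occur only when $c = 0$, with $\varphi_1, \varphi_2 \in \{t, -t\}$, and the four combinations give exactly the isometries computed in Example~\ref{ex:SymmetriesIII}, namely $\bff(x,y,z) = (-x, \pm\det\bfQ\, z, \pm y)$, which are Cases (i)--(l) in Table~\ref{tab:symmetries}. I would also check that, when $c \neq 0$, the two parabolas $\CCC^\rmIII_1$ and $\CCC^\rmIII_2$ genuinely fail to be related by any isometry compatible with the radius functions (they have the same shape as curves, so the obstruction is the radius data, which is precisely what the $R^\rmIII_{12}$ computation records), so that Case B indeed contributes nothing.

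Finally I would assemble the group structure. When $c \neq 0$ the symmetry set is $\{$(a),(b),(c),(d)$\}$; each of (b),(c),(d) is an involution and their pairwise products close up among the four, so the group is $\ZZ_2^2$, the Klein four-group — matched against the classification of order-4 groups. When $c = 0$ the eight maps (a)--(d) together with (i)--(l) form the group: (a)--(d) is the normal Klein subgroup, and composing, say, (i) (a half-turn) with (b) produces (k) or (l) (the order-4 rotatory reflections, since $\bfQ$ for (k),(l) has $\bfQ^2$ equal to the matrix of (d), a half-turn, not the identity), so the group is nonabelian of order 8 with a cyclic subgroup of order 4, hence $D_4$ rather than the quaternion group (it contains reflections). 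I would verify this by exhibiting the multiplication on generators — e.g.\ take the reflection (b) and the order-4 element (k) and check $(b)(k)(b) = (k)^{-1}$ — and then invoke the list of groups of order 8.

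I expect the main obstacle to be the Case B factorization bookkeeping: one must be careful that ``splits into M\"obius-like factors'' means splits into \emph{irreducible bilinear} factors over $\RR$ (or a real algebraic extension), and that the quadratic remnants $g(t^2+u^2+1)$ really contribute nothing real, while correctly tracking that the first remnant $2c + g(t^2 - u^2)$ splits iff $c = 0$. The cross-spine consistency check — forcing a single $\bff$ to satisfy both B1 relations simultaneously, which is what fixes the sign $\varepsilon = \det\bfQ$ and distinguishes (i),(j) from (k),(l) — is the other delicate point, but it is handled verbatim by the computation in Example~\ref{ex:SymmetriesIII}, which the proof can simply cite.
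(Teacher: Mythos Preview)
Your proposal is correct and follows essentially the same approach as the paper: factor $R^\rmIII_{11}, R^\rmIII_{22}$ for Case A and $R^\rmIII_{12}$ for Case B, extract the M\"obius-like factors, recover the isometries by differentiating the spine-curve relations (citing Example~\ref{ex:SymmetriesIII} for Case B), and identify the group by inspecting element orders. One small slip: Cases (a)--(d) are the maps $(+x,\pm y,\pm z)$ with the $x$-coordinate fixed, not ``the maps with an even number of minus signs'' --- your own differentiation procedure would reveal this immediately, since both $\bfc^{\rmIII\prime\prime}_1(0)$ and $\bfc^{\rmIII\prime\prime}_2(0)$ point along $e_1$ and force $\bfQ e_1 = e_1$.
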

\begin{proof}
\emph{Case A:} Inserting the radius functions $r^\rmIII_1, r^\rmIII_2$ from Table \ref{tab:canonicalform} into \eqref{eq:R-pol} yields
\begin{align*}
R^\rmIII_{11}(t,u) & = -16g(u+t)(u-t)\big(g (u^2 + t^2 + 1) + 2c\big),\\
R^\rmIII_{22}(t,u) & = -16g(u+t)(u-t)\big(g (u^2 + t^2 + 1) - 2c\big),
\end{align*}
each of which has the M\"obius-like factors $F_1(t,u) = u-t$ and $F_2(t,u) = u+t$. As the remaining factor is irreducible in each case, we find that the tentative M\"obius transformations are $\varphi_1(t) = t$ and $\varphi_2(t) = -t$ for both $\bfc^\rmIII_1$ and $\bfc^\rmIII_2$. These again combine to four potential symmetries, corresponding exactly to the Cases (a)--(d) in Table \ref{tab:symmetries}.

\emph{Case B:} One computes
\[ R^\rmIII_{12}(t,u)=(r^\rmIII_1)^2(t)-(r^\rmIII_2)^2(u)= -16 g (gu^2 - gt^2 - 2c)(u^2 + t^2 + 1). \]
The factor $u^2+t^2+1$ is irreducible. If $c\neq 0$, then the factor $-gu^2+gt^2+2c$ defines a hyperbola, since $g\neq 0$, and is therefore also irreducible. It follows that $R^\rmIII_{12}$ does not have M\"obius-like factors, so that there are no symmetries corresponding to Case B of Theorem \ref{thm:MainTheorem} when $c\neq 0$. However, if $c=0$, then $R^\rmIII_{12}$ has the M\"obius-like factors $F_\pm(t,u) = u \mp t$, corresponding to the M\"obius transformations $\varphi_\pm(t) = \pm t$. 
From Example \ref{ex:SymmetriesIII} it follows that Conditions B1 and B2 are satisfied for the Cases (i)--(l) in Table \ref{tab:symmetries}.

Noting from the explicit representations in Table \ref{tab:symmetries} that the symmetries (b), (c), (d), (i), (j) have order 2 and (k), (l) have order 4, the second part follows from comparing to a list of groups of small order \cite[p. 85]{Rotman95}.
\end{proof}

\begin{figure}
\begin{center}
\begin{overpic}[width=0.46\columnwidth,unit=0.005\columnwidth]{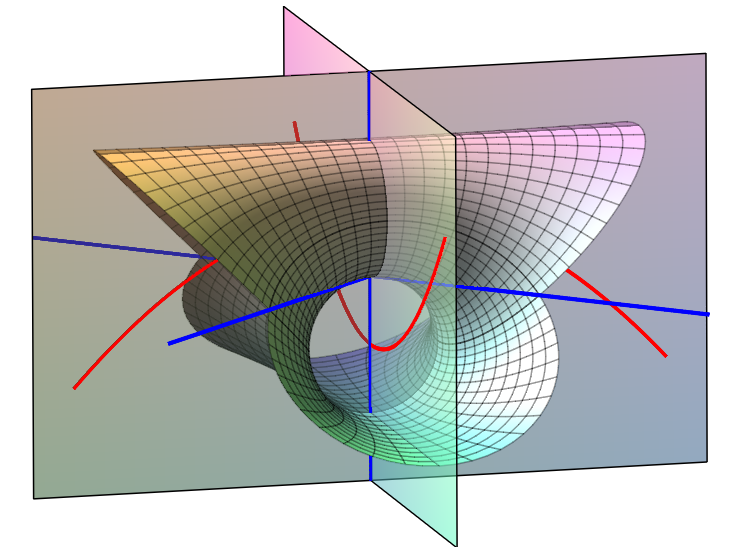}
\put(88,63){$\Pi_1$}
\put(33,69){$\Pi_2$}
\end{overpic}\qquad
\begin{overpic}[width=0.46\columnwidth, unit=0.005\columnwidth]{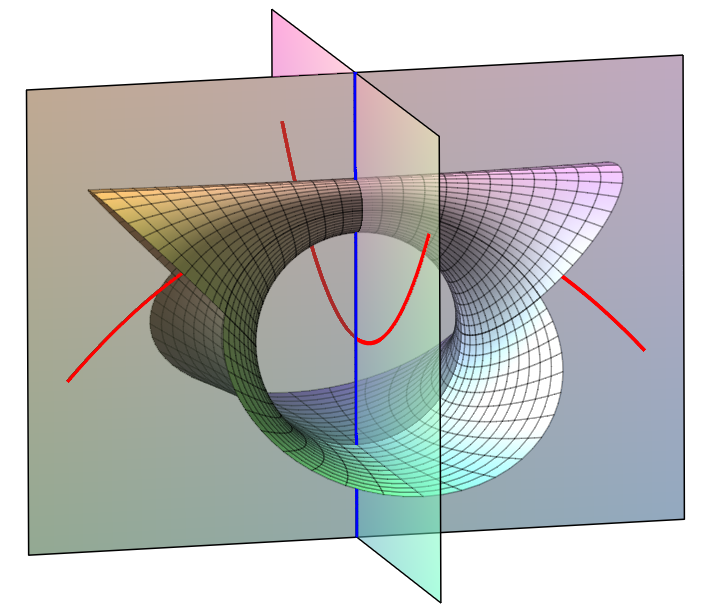}
\put(88,73){$\Pi_1$}
\put(33,79){$\Pi_2$}
\end{overpic}
\end{center}
\caption{The Dupin cyclide of Type III in Table \ref{tab:canonicalform} with parameters $c = 0, g = 1$ (left) and parameters $c = 0.3, g = 1$ (right).}\label{fig:TypeIIIsuper}
\end{figure}

While it is well known that any cyclide of Type $\rmII$ or $\rmIII$ is symmetric with respect to the planes containing each of the spine curves, and therefore also with respect to the intersection line of these two planes, the preceding results show that when $c\neq 0$, such a cyclide cannot have any other symmetry. In fact, we have proven that cyclides of Type $\rmII$ and $\rmIII$ have either 4 or 8 symmetries; in this last case, we say that it is a \emph{super-symmetric cyclide} (see Figure \ref{fig:TypeIIsuper}, left, and Figure \ref{fig:TypeIIIsuper}, left).

\subsection{Algorithm}
We end with providing an algorithm for computing the symmetries of a Dupin cyclide $\SSS$ defined by spine curves $\bfc_1, \bfc_2$ and corresponding radius functions $r_1, r_2$, not necessarily given in canonical form. Whether the Dupin cyclide is of Type I, II, or III follows from the nature of the conics; this is easily determined, for instance by implicitization or by computing the curvature, which is independent of position and orientation. Moreover, we have the following result.

\begin{lemma}\label{lem-supersym}
A Dupin cyclide $\SSS = \SSS_{\bfc_1,r_1}=\SSS_{\bfc_2,r_2}$ is super-symmetric if and only if one of the following cases holds:
\begin{itemize}
\item $\SSS$ is of Type II, and the radius function corresponding to the ellipse has minimum $r_{\min}$ and maximum $r_{\max}$ satisfying $r_{\min} + r_{\max} = 0$.
\item $\SSS$ is of Type III, and $r_1 + r_2 = 0$ holds identically.
\end{itemize}
\end{lemma}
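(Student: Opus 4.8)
The plan is to characterize super-symmetry in terms of the parameters of the canonical form, and then show that each intrinsic condition in the statement is equivalent to the parameter condition $c = 0$. By Theorems \ref{thm:SymmetriesII} and \ref{thm:SymmetriesIII}, a Dupin cyclide of Type II or III in canonical form is super-symmetric precisely when $c = 0$; cyclides of Type I are never super-symmetric (their symmetry group is always $\ZZ_2^2 \times \SS^1$, which is infinite, but the number of \emph{discrete} symmetry classes is fixed and does not jump). So the task reduces to translating ``$c = 0$'' into a statement about the radius functions $r_1, r_2$ that is independent of the chosen canonical coordinates.

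First I would handle Type II. Any cyclide of Type II is obtained from a canonical one by an isometry, which does not change the radius functions (Lemma \ref{lemma-1} gives $\tilde r = \pm r$, and taking extrema absorbs the sign). So I may assume the cyclide is in canonical form with parameters $a, b, c, f$ satisfying $f^2 = a^2 - b^2$, $a, b \neq 0$, $f > 0$. The radius function corresponding to the ellipse $\bfc_1^{\rmII}$ is $r_1^{\rmII}(t) = c - f\frac{1 - t^2}{1 + t^2}$. As $t$ ranges over $\RR \cup \{\infty\}$, the quantity $\frac{1-t^2}{1+t^2}$ ranges over $[-1, 1]$, so $r_1^{\rmII}$ attains minimum $r_{\min} = c - f$ and maximum $r_{\max} = c + f$ (using $f > 0$). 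Hence $r_{\min} + r_{\max} = 2c$, which vanishes iff $c = 0$. This gives the Type II equivalence. (One should note the radius function ``corresponding to the ellipse'' is well-defined intrinsically: of the two spine conics, exactly one is an ellipse, and Maxwell's correspondence pairs it with a specific radius function.)

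Next I would handle Type III. Again reduce to canonical form; an ambient isometry leaves $r_1, r_2$ unchanged up to a common sign, and the condition $r_1 + r_2 = 0$ is insensitive to replacing $(r_1, r_2)$ by $(-r_1, -r_2)$. In canonical form, $r_1^{\rmIII}(t) = c + g\big(t^2 + \tfrac12\big)$ and $r_2^{\rmIII}(t) = c - g\big(t^2 + \tfrac12\big)$, so $r_1^{\rmIII}(t) + r_2^{\rmIII}(t) = 2c$ identically. Thus $r_1 + r_2 \equiv 0$ iff $c = 0$, giving the Type III equivalence. Finally I would note the converse direction is automatic: if $\SSS$ is super-symmetric, then it is of Type II or III (Type I is excluded), and the parameter $c$ of its canonical form must vanish by Theorems \ref{thm:SymmetriesII} and \ref{thm:SymmetriesIII}, whence the corresponding radius condition holds by the computations above; conversely these radius conditions force $c = 0$ and hence super-symmetry.

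\textbf{Main obstacle.} The only genuinely delicate point is making sure the two intrinsic conditions are genuinely coordinate-free and correctly matched to ``$c=0$'': that is, (i) confirming that an arbitrary (not canonical) Dupin cyclide has its radius functions unchanged, up to sign, under the isometry bringing it to canonical form — which follows from Lemma \ref{lemma-1} — and that the stated conditions ($r_{\min}+r_{\max}=0$ for the ellipse's radius function; $r_1+r_2\equiv 0$) are sign-insensitive and thus well-posed; and (ii) verifying the extrema computation for $r_1^{\rmII}$ carefully, in particular that the supremum/infimum over the projective parameter line are attained at the endpoints $\frac{1-t^2}{1+t^2} = \pm 1$, and that $f > 0$ so that $r_{\min} = c-f < c+f = r_{\max}$. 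Everything else is a direct substitution into Table \ref{tab:canonicalform}.
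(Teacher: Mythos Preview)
Your proposal is correct and follows essentially the same route as the paper's proof: reduce to canonical form, then compute $r_{\min}+r_{\max}=2c$ for Type~II and $r_1+r_2\equiv 2c$ for Type~III, so that each condition is equivalent to $c=0$. The paper's version is terser (it simply asserts the conditions are invariant under reparametrization and then substitutes), while you spell out the isometry and sign issues more carefully; one small quibble is that your citation of Lemma~\ref{lemma-1} is slightly off, since that lemma concerns symmetries of $\SSS$ rather than arbitrary isometries of $\RR^3$, but the needed fact (an isometry maps the sphere family $\Sigma_{\bfc,r}$ to $\Sigma_{\bff\circ\bfc,r}$, preserving radii) is elementary.
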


\begin{proof}
Since the conditions $r_{\min}+r_{\max}=0$ and $r_1+r_2=0$ remain valid under reparametrization, we can assume that $(\bfc_1, r_1)$ and $(\bfc_2, r_2)$ take the canonical forms in Table \ref{tab:canonicalform}. In the first case of the lemma,
\[ r_{\min} = r^\rmII_1(0) = c-f, \qquad r_{\max} = \lim_{t\to \pm \infty} r^\rmII_1(t)=c+f, \]
while in the second case
\[ r_1 = r^\rmIII_1 = c + g\left(t^2 + \frac12 \right),\qquad r_2 = r^\rmIII_2 = c - g\left(t^2 + \frac12 \right).\]
In either case the sum is zero precisely when $c = 0$, which proves the lemma.
\end{proof}

We can now sketch a method for determining the symmetries of~$\SSS$.
\begin{enumerate}
\item \emph{Determine the Type of $\SSS$ from the nature of its spine curves}.
\item \emph{Determine the invariants of the conics referenced in Table \ref{tab:symmetries}}.\\ 
Find planes $\Pi_1, \Pi_2$ containing $\bfc_1, \bfc_2$. Next determine $O$, i.e., for
\begin{itemize}
\item Type I, the barycenter of the circle;
\item Type II, the barycenter of the ellipse/hyperbola;
\item Type III, the average of the vertex and focal point, for each parabola.
\end{itemize}
Let $\Pi_0$ be the plane passing through $O$ and perpendicular to $\Pi_1, \Pi_2$. For Type III, we also determine the bisector planes $\Pi_3,\Pi_4$ of $\Pi_1,\Pi_2$.
\item \emph{For Types II and III, determine if $\SSS$ is super-symmetric using Lemma~\ref{lem-supersym}.}
\item \emph{Look up the symmetry groups and symmetries of $\SSS$ in Tables \ref{tab:symmetries} and~\ref{tab:DupinSymmetryGroups}.}
\end{enumerate}

\section{Canal surfaces and blending patches with prescribed symmetries} \label{sec-applications}

In this section, we first construct (patches of) canal surfaces with a prescribed planar, axial or central symmetry, which are the most common symmetries in Computer Graphics and Geometric Design. After this we address the computation of blendings with a prescribed symmetry of two non-intersecting canal surfaces, under certain conditions. In either case we apply Theorem \ref{thm:singlespine}, choosing a symmetric rational spine curve (Condition C1) and a rational radius function that respects the symmetries of the spine curve (Condition C2).

\subsection{Building symmetric (patches of) canal surfaces}\label{building-1}

In order to build a patch of a rational canal surface $\SSS_{\bfc,r}$ with a prescribed symmetry $\bff$, we first need to find a rational spine curve $\bfc$ invariant under $\bff$. For this purpose, consider the Bernstein polynomials
\[ B_{i,n}(t) = {n\choose i} t^i (1-t)^{n-i}, \qquad i = 0,\ldots, n, \]
of some fixed degree $n$, and pick a B\'ezier curve with parametrization
\begin{equation}\label{eq:Bezier}
\bfc(t)=\sum_{i=0}^n \bfb_iB_{i,n}(t),\qquad t\in [0,1],
\end{equation}
where the control points $\bfb_0, \ldots, \bfb_n$ form a control polygon $\PPP$ invariant under~$\bff$. In this situation, it is known that $\bfc$ is invariant under the symmetry $\bff$ \cite{SanchezReyes}. Hence, by Condition C1 of Theorem \ref{thm:singlespine} there exists a M\"obius transformation $\varphi$ such that
\[
\sum_{i=0}^n \bff(\bfb_i) B_{i,n}(t) = \bff \circ \bfc(t) = \bfc \circ \varphi(t)
= \sum_{i=0}^n \bfb_iB_{i,n}(\varphi(t)).
\]
Using that $B_{i,n}(1 - t) = B_{n-i,n}(t)$, one obtains two cases:
\begin{align*}
\text{Case 1} & : & \varphi(t) & = t,     & \bff(\bfb_i) & = \bfb_i, & i = 0,\ldots,n;\\
\text{Case 2} & : & \varphi(t) & = 1 - t, & \bff(\bfb_i) & = \bfb_{n-i}, & i = 0,\ldots,n.
\end{align*}
In the first case, Condition C2 of  Theorem \ref{thm:singlespine} holds trivially. In the second case, we require a radius function $r(t)$ satisfying $r^2(t) = (r\circ \varphi)^2(t) = r^2(1-t)$, which is achieved using the following result.

\begin{theorem}\label{thm:rBezier}
A degree $n$ polynomial $r$ satisfies $r^2(t) = r^2(1-t)$ if and only if
\begin{equation}\label{eq:Bezier2}
r(t)=\sum_{i=0}^n a_i B_{i,n}(t), \qquad a_i = (-1)^n a_{n-i},\qquad i=0,\ldots,n.
\end{equation}
\end{theorem}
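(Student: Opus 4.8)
The plan is to prove both implications by translating the condition $r^2(t) = r^2(1-t)$ into a statement about Bézier/Bernstein coefficients, using the reflection symmetry $B_{i,n}(1-t) = B_{n-i,n}(t)$ of the Bernstein basis. First I would handle the easy direction: assuming $r$ has the form \eqref{eq:Bezier2} with $a_i = (-1)^n a_{n-i}$, I compute $r(1-t) = \sum_i a_i B_{i,n}(1-t) = \sum_i a_i B_{n-i,n}(t) = \sum_j a_{n-j} B_{j,n}(t) = (-1)^n \sum_j a_j B_{j,n}(t) = (-1)^n r(t)$, so that $r^2(1-t) = r^2(t)$ immediately. This is a routine reindexing, so the bulk of the work is the converse.

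For the converse, suppose $r$ is a degree $n$ polynomial with $r^2(t) = r^2(1-t)$ identically. Since $r(t)^2 - r(1-t)^2 = 0$ as a polynomial identity, we can factor $(r(t) - r(1-t))(r(t) + r(1-t)) = 0$ in the integral domain $\RR[t]$, so either $r(t) = r(1-t)$ or $r(t) = -r(1-t)$ holds identically (this is the polynomial analogue of Lemma \ref{ratfunc}, and is in fact cleaner since we are directly in $\RR[t]$). Now expand $r$ in the Bernstein basis of degree $n$: $r(t) = \sum_{i=0}^n a_i B_{i,n}(t)$, which is legitimate because $\{B_{i,n}\}_{i=0}^n$ is a basis of the space of polynomials of degree at most $n$. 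Applying the substitution $t \mapsto 1-t$ and using $B_{i,n}(1-t) = B_{n-i,n}(t)$ gives $r(1-t) = \sum_{i=0}^n a_{n-i} B_{i,n}(t)$. Comparing coefficients in the Bernstein basis (using its linear independence), the identity $r(t) = \pm r(1-t)$ becomes $a_i = \pm a_{n-i}$ for all $i$.

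It then remains to rule out the "$+$" sign when $n$ is odd and the "$-$" sign when $n$ is even, i.e., to show the sign is forced to be $(-1)^n$. The key observation is the parity of the central/antipodal pairing: if $a_i = -a_{n-i}$ were to hold with $n$ even, then the middle coefficient $a_{n/2}$ satisfies $a_{n/2} = -a_{n/2}$, hence $a_{n/2}=0$, which is allowed — so this case is \emph{not} automatically excluded, and I must be more careful. The resolution is that both sign choices are in fact compatible with $r^2(t)=r^2(1-t)$ at the level of the raw coefficient relations, but the two families $\{a_i = a_{n-i}\}$ and $\{a_i = -a_{n-i}\}$ correspond respectively to $r(1-t) = r(t)$ and $r(1-t) = -r(t)$, and a single polynomial $r$ can only satisfy one of these unless $r \equiv 0$. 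So I would state the theorem's conclusion as characterizing exactly those $r$ with $a_i = (-1)^n a_{n-i}$; for the converse I would note that when the dichotomy gives $r(t) = r(1-t)$ (the "$+$" family) with $n$ odd, one checks directly this forces extra vanishing that still lands inside the $(-1)^n$-family after regrouping — more cleanly, I suspect the intended reading is that $r(t) = r(1-t)$ for $n$ odd and $r(t) = -r(1-t)$ for $n$ even can be absorbed, or the statement implicitly allows the symmetric ($a_i = a_{n-i}$) solutions to be re-expressed. I expect this sign-bookkeeping step to be the main obstacle: reconciling the two a priori possibilities $r(1-t) = \pm r(t)$ with the single clean condition $a_i = (-1)^n a_{n-i}$, and I would resolve it by observing that any polynomial of degree exactly $n$ with $r(1-t) = r(t)$ can be rewritten, but if degree is allowed to drop one has freedom; the honest fix is to read \eqref{eq:Bezier2} as specifying the degree-$n$ Bernstein representation and to verify that the "$+$" case either does not produce a genuine degree-$n$ example inconsistent with $(-1)^n$, or is captured by the parametrization after a change of basis. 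I would close by remarking that the coefficient condition $a_i = (-1)^n a_{n-i}$ says precisely that the control polygon of $r$ is symmetric (for $n$ even) or antisymmetric (for $n$ odd) under the reversal $i \mapsto n-i$, matching the geometric picture behind Condition C2.
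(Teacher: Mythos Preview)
Your ``$\Longleftarrow$'' direction and the first half of your ``$\Longrightarrow$'' direction are fine and match the paper. The gap is exactly where you flag it: you never actually resolve the sign. You correctly reduce to the dichotomy $a_i = a_{n-i}$ for all $i$ or $a_i = -a_{n-i}$ for all $i$, and then spend a paragraph circling without landing. Phrases like ``I suspect the intended reading,'' ``can be absorbed,'' and ``the honest fix is to read \eqref{eq:Bezier2} as\ldots'' are not an argument.

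The missing idea is to use the hypothesis that $r$ has degree \emph{exactly} $n$. The paper does this by computing the leading coefficient: the $n$-th derivative of $r$ in Bernstein form is $n!\,\Delta^n a_0 = n!\sum_{j=0}^n (-1)^j\binom{n}{j}a_{n-j}$, and a reindexing $j\mapsto n-j$ combined with $a_i=\pm a_{n-i}$ shows this sum vanishes precisely in the two ``wrong'' cases ($n$ odd with the $+$ sign, $n$ even with the $-$ sign). Since $\deg r = n$ forces this derivative to be nonzero, those cases are excluded and only $a_i=(-1)^n a_{n-i}$ survives. An equivalent and perhaps more transparent version: $r(1-t)=r(t)$ means $r(1/2+s)$ is even in $s$, hence of even degree, while $r(1-t)=-r(t)$ forces odd degree; comparing with the parity of $n$ finishes it. Either route is short once you see that ``degree exactly $n$'' is what pins down the sign; without invoking it, the statement is simply false (e.g.\ $r(t)=B_{0,2}(t)+B_{2,2}(t)$ satisfies $r(1-t)=r(t)$ but has degree $2$, and its coefficients obey $a_i=a_{n-i}$, consistent with $(-1)^2=+1$; but $r(t)=B_{0,3}(t)+B_{3,3}(t)$ also satisfies $r(1-t)=r(t)$ with $a_i=a_{3-i}$, yet this $r$ has degree $2$, not $3$).
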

\begin{proof}
``$\Longrightarrow$": By Lemma~\ref{ratfunc}, $r(t)=\pm r(1-t)$. Expressing $r$ in the Bernstein basis and using that $B_{i,n}(1 - t) = B_{n-i,n}(t)$, it follows
\[ 0 = \sum_{i=0}^n a_i B_{i,n}(t) \mp \sum_{i=0}^n a_i B_{n-i,n}(t) = \sum_{i=0}^n (a_i\mp a_{n-i}) B_{i,n}(t),  \]
implying either $a_i = a_{n-i}$ for $i = 0,\ldots,n$, or $a_i = - a_{n-i}$ for $i = 0,\ldots, n$. Let $\Delta^k$ denote the $k$-th forward difference operator recursively defined by
\begin{equation}\label{eq:forwarddifference}
\Delta^0 a_i = a_i,\qquad \Delta^k a_i = \Delta^{k-1} a_{i+1} - \Delta^{k-1} a_i,\quad i = 0,\ldots, n-k, \quad k\geq 1.
\end{equation}
It follows by induction that
\[ \frac{\rmd^n r}{\rmd t^n} = n! \Delta^n a_0 = n! \sum_{j = 0}^n (-1)^j {n \choose j} a_{n-j}, \]
which is zero when $n$ is odd and $a_i = a_{n-i}$, or when $n$ is even and $a_i = - a_{n-i}$. Since $r$ has degree $n$, it takes the form \eqref{eq:Bezier2}.

``$\Longleftarrow$": By hypothesis, the coefficients $a_i$ in the Bernstein form \eqref{eq:Bezier2} of $r$ satisfy $a_i = (-1)^n a_{n-i}$. Using that $B_{i,n}(1-t)=B_{n-i,n}(t)$, it follows that $r(1-t)=(-1)^n \cdot r(t)$.
\end{proof}

Thus we can apply Theorem \ref{thm:singlespine} to construct a canal surface $\SSS_{\bfc,r}$ with the prescribed symmetry $\bff$. 
	
\begin{example}\label{ex:planarsymmetry}
The case of a symmetry $\bff$ with respect to a plane $\Pi$ is especially simple, as it holds to assume that $\bfc$ is contained in $\Pi$. In this case the restriction $\bff|_\Pi$ is the identity, and Conditions C1 and C2 in Theorem~\ref{thm:singlespine} are trivially satisfied taking $\varphi(t)=t$.
\end{example}

\begin{example}\label{ex:twisted}
Let $n = 3$, and consider the B\'ezier curve $\bfc$ as in \eqref{eq:Bezier} with
\[
\bfb_0 = \begin{bmatrix} -1 \\ 1 \\ -1 \end{bmatrix},\qquad
\bfb_1 = \begin{bmatrix} -\frac13\\-\frac13 \\ 1 \end{bmatrix},\qquad
\bfb_2 = \begin{bmatrix} \frac13\\-\frac13 \\ -1 \end{bmatrix},\qquad
\bfb_3 = \begin{bmatrix} 1\\1 \\ 1 \end{bmatrix}.
\]
This is a reparametrization of the twisted cubic curve, as a calculation yields
\[ \bfc\big((s+1)/2\big) = (s,s^2,s^3),\qquad s\in [-1, 1].\]

Clearly the half-turn $\bff(x,y,z) = (-x, y, -z)$ is a symmetry of the control polygon of $\bfc$, since $\bff(\bfb_i) = \bfb_{3-i}$ for $i = 0,1,2,3$. Hence Condition C1 holds for the curve~$\bfc$ with the symmetry~$\bff$ and M\"obius transformation $\varphi(t) = 1-t$. Taking the degree to be one in \eqref{eq:Bezier2} --- the simplest case not yielding a pipe surface --- and $a_1 = -a_0 = 1/2$, we obtain the radius function $r(t) = t - 1/2$, which satisfies Condition C2. The resulting symmetric canal surface $\SSS_{\bfc,r}$ is shown in Figure \ref{fig:examples}, right.
\end{example}

The curve defined by the parametrization $\bfc$ in \eqref{eq:Bezier} can also be used for defining global symmetric surfaces, as opposed to patches. Since $\bfc$ defines an irreducible algebraic variety, if $\bff\circ \bfc=\bfc\circ \varphi$ holds for $t\in [0,1]$, then $\bff\circ \bfc=\bfc\circ \varphi$ also holds for any $t\in \RR$ for which it is defined. Therefore, the parametrization \eqref{eq:Bezier} provides a global curve invariant under the symmetry $\bff$.

Notice that since the parametrization \eqref{eq:Bezier} is polynomial, the surface $\SSS$ produced this way is unbounded; bounded surfaces can be obtained from bounded spine curves, for instance represented by NURBS.

\subsection{Building symmetric blends}

Suppose that for two canal surfaces $\SSS_i=\SSS_{\bfc_i,r_i}$, with $i = 1,2$, we wish to construct a symmetric canal surface blend $\SSS$ connecting characteristic circles $k_i(t_i)\subset \SSS_i$, i.e., $\SSS\cap\SSS_i = k_i(t_i)$; see Figure \ref{fig:cylinders}. Recall from Remark \ref{rem:Minkowski} that $\SSS_i$ can be identified with a rational parametric curve $\big(\bfc_i(t);r_i(t)\big)$ in four-dimensional Minkowski space $\RR^{3,1}$. In \cite{Dahl2014} it is shown that if two curves in $\RR^{3,1}$ meet with $G^N$-continuity, with $N = 0,1,2$, then the same holds for the corresponding canal surfaces. Let us see how to apply this result to two different subproblems, by enforcing (the stronger) $C^N$-continuity on the curves in $\RR^{3,1}$, thus resulting in a $G^N$-continuous canal surface blend.

\subsubsection{Symmetry with respect to the plane of coplanar spine curves}
Suppose $\bff$ is a reflection in a plane $\Pi$ containing both spline curves $\bfc_1,\bfc_2$. We wish to construct a canal surface $\SSS = \SSS_{\bfc,r}$ that it is symmetric with respect to $\Pi$, i.e., invariant under $\bff$. This situation arises, for instance, when $\SSS_1,\SSS_2$ are surfaces of revolution with intersecting axes, which we wish to blend with a patch symmetric with respect to a plane containing the axes.

As explained in Example \ref{ex:planarsymmetry}, a patch is symmetric with respect to $\Pi$ whenever its spine curve $\bfc$ is contained in $\Pi$. We consider a B\'ezier curve~$\bfc$ as in \eqref{eq:Bezier} of degree $n = 2N + 1$, in which case the $C^N$-continuity conditions form the Hermite interpolation problem
\begin{equation}\label{eq:condit}
\begin{array}{llllll}
   \displaystyle \frac{n!}{(n-k)!}\Delta ^k {\bfb}_0     & = & \bfc^{(k)}(0)& =& \bfc^{(k)}_1(t_1)\\
   \displaystyle \frac{n!}{(n-k)!}\Delta ^k {\bfb}_{n-k} & = & \bfc^{(k)}(1)& =& \bfc^{(k)}_2(t_2)
\end{array},\qquad k = 0, \ldots, N,
\end{equation}
where $\Delta^k$ is the $k$-th forward difference operator as in \eqref{eq:forwarddifference}. Starting from $\bfb_0 := \bfc_1(t_1)$ and $\bfb_n = \bfc_2(t_2)$, the condition for $C^0$-continuity, the control points $\bfb_k, \bfb_{n-k}$ of $\bfc$ are determined from \eqref{eq:condit} for $k = 1, 2, \ldots, N$.  Notice in particular that no symmetry conditions are imposed on the $\bfb_i$. 

Additionally, as explained in Example \ref{ex:planarsymmetry}, any choice of the radius function~$r$ provides a canal surface symmetric with respect to $\Pi$. However, we need to impose continuity conditions on $r$ as well, resulting in the Hermite interpolation problem
\begin{equation}\label{eq:condit2}
r^{(k)}(0)= r^{(k)}_1(t_1),\qquad r^{(k)}(1)= r^{(k)}_2 (t_2), \qquad k=0,\ldots,N.
\end{equation}
A polynomial $r$ satisfying these conditions can be found as above, by expressing it in the Bernstein basis.

\begin{figure}
\begin{center}
\includegraphics[scale=0.25]{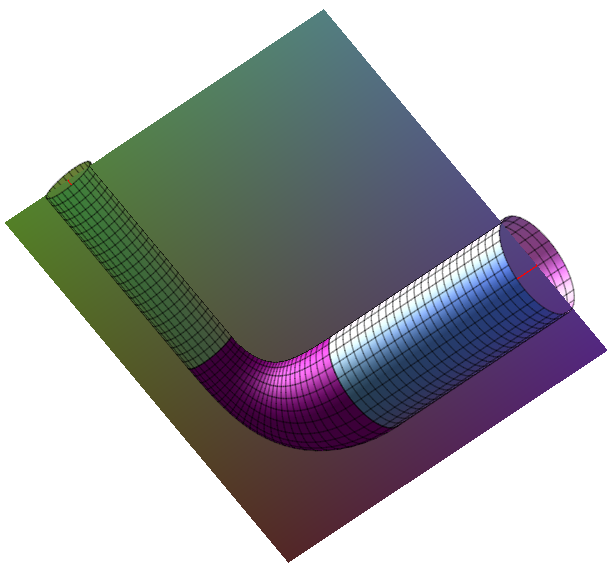}
\end{center}
\caption{The two cylinders $\SSS_1$ (right) and $\SSS_2$ (left) from Example \ref{ex:cylinders}, blended by the symmetric canal surface patch $\SSS$ (middle).}\label{fig:cylinders}
\end{figure}

\begin{example}\label{ex:cylinders}
Let us consider the two cylinders $\SSS_1$, $\SSS_2$ in Figure~\ref{fig:cylinders} with corresponding radii $r_1 = 1/2$, $r_2 = 1/4$ and axes (spine curves) 
\[ \bfc_1(t) = (0,0,-2t+1),\qquad \bfc_2(t) = (0,2t-1,0)\]
intersecting in the origin. Applying the results in this section, we let $t_1 = 0$, $t_2 = 1$ and connect $\bfc_1((-\infty,t_1])$, $\bfc_2([t_2,\infty))$ smoothly by a canal surface patch $\SSS$, shown in Figure~\ref{fig:cylinders}, with cubic B\'ezier spine curve $\bfc$ as in \eqref{eq:Bezier} and radius function $r(t) = \sum_{i=0}^3 a_i B_{i,3}(t)$. From \eqref{eq:condit} we determine
\begin{align*}
& \bfb_0 = \bfc(0) = \bfc_1(0) = (0,0,1),         & \qquad & \bfb_3 = c(1) = \bfc_2(1) = (0,1,0),\\
& \bfb_1 = \bfb_0 + \frac13 \bfc_1'(0)= (0,0,1/3),& \qquad & \bfb_2 = \bfb_3 - \frac13 \bfc_2'(1) = (0,1/3,0),
\end{align*}
which reduces to a quadratic spine curve $\bfc(t) = \big(0, t^2, (t-1)^2\big)$, while \eqref{eq:condit2} yields
\begin{align*}
& a_0 = r(0) = r_1(0) = 1/2, & \qquad & a_3 = r(1) = r_2(1) = 1/4,\\
& a_1 = a_0 + \frac13 r_1'(0) = 1/2, & \qquad & a_2 = a_3 - \frac13 r_2'(1) = 1/4,
\end{align*}
so that $r(t) = (2 - 3t^2 + 2t^3)/4$. Observe that the patch is symmetric with respect to the plane spanned by the axes of the cylinders.
\end{example}

\subsubsection{A more general case}

More generally, suppose that we are given two canal surfaces $\SSS_1$, $\SSS_2$ and a nontrivial isometry $\bff$ such that, for some integer $N$ and parameters $t_1, t_2$,
\begin{align}
\bff \circ \bfc^{(k)}_1(t_1) & = \bfc^{(k)}_2(t_2),     & k = 0,\ldots,N; \label{eq:C1}\\
              r^{(k)}_1(t_1) & = \pm(-1)^k r^{(k)}_2(t_2), & k = 0,\ldots,N. \label{eq:C2}
\end{align}
For instance, Conditions \eqref{eq:C1} and \eqref{eq:C2} hold for any integer $N$ and certain parameters $t_1, t_2$ when $\SSS_1,\SSS_2$ each have a unique spine curve and are related by an isometry~$\bff$. Under these conditions, there exists a symmetric patch (invariant under $\bff$) with degree of continuity equal to $N$. 

As before, we first choose a B\'ezier curve $\bfc$ of degree $n := 2N + 1$ that satisfies the continuity conditions \eqref{eq:condit}. Using \eqref{eq:C1} it follows that
\[ \Delta ^k {\bf b}_{n-k} = \frac{(n-k)!}{n!} \bfc^{(k)}_2(t_2) = \frac{(n-k)!}{n!} \bff \circ \bfc^{(k)}_1(t_1) = \bff \big( \Delta^k \bfb_0 \big), \]
implying that the B\'ezier curve $\bfc$ is invariant under $\bff$. Hence, as shown in Section~\ref{building-1}, $\bff\circ \bfc = \bfc \circ \varphi$ with $\varphi(t) = 1-t$. By Theorem \ref{thm:singlespine}, a canal surface with spine curve $\bfc$ is invariant under $\bff$ when, in addition, its radius function $r$ satisfies $r^2(t) = r^2(1-t)$, taking the form \eqref{eq:Bezier2}. Together with our assumption \eqref{eq:C2}, the continuity conditions \eqref{eq:condit2} become
\[ r^{(k)}(0) = r_1^{(k)} (t_1) = \pm (-1)^k r_2^{(k)} (t_2) = \pm (-1)^k r^{(k)} (1), \qquad k = 0,\ldots, N, \]
which is compatible with our invariance condition $r^2(t) = r^2(1-t)$. 

\begin{figure}
\begin{center}
\includegraphics[scale=0.265]{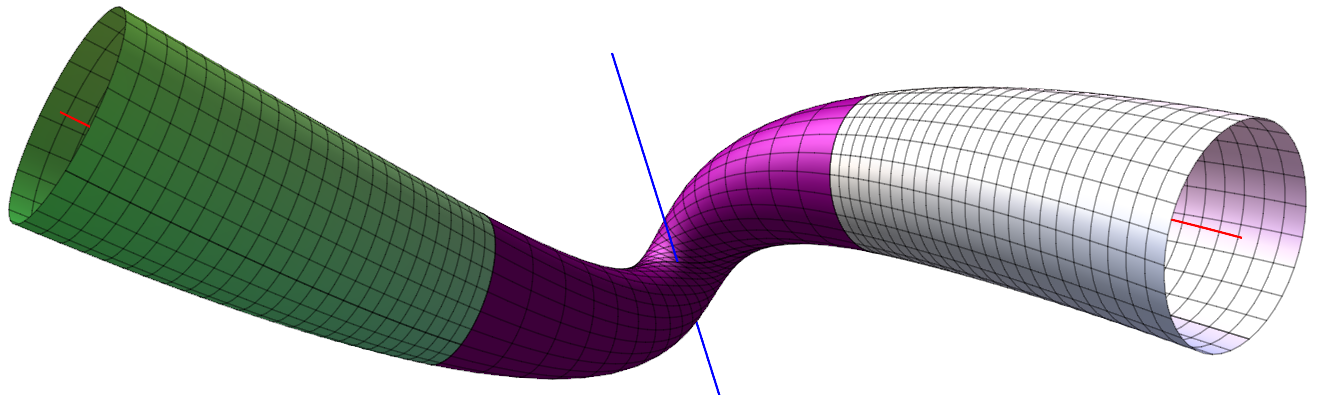}
\end{center}
\caption{The symmetric canal surface from Example \ref{ex:twisted}, with its middle singular part replaced by a regular canal surface patch with the same symmetry.}\label{fig:finalexample}
\end{figure}

\begin{example}
Consider the symmetric canal surface $\SSS$ from Example \ref{ex:twisted}. We wish to replace a piece surrounding the singularity by a regular canal surface patch, without breaking the symmetry. For this purpose, we do not modify the spine curve, but replace, in the interval $[t_1, t_2] = [0, 1]$, the linear radius function $r_1(t) = t - 1/2$ by a quadratic radius function $r(t) = \sum_{i=0}^2 a_i B_{i,2}(t)$. From \eqref{eq:condit2}, with $r_2 = -r_1$, imposing smoothness yields
\[ a_0 = r(0) = r_1(0) = - \frac12 = r_2(1) = r(1) = a_2, \]
\[ 0 = a_0 + \frac12 r_1'(0) = a_1 = a_2 - \frac12 r_2'(1) = 0. \]
Since the latter equation is consistent, we obtain a quadratic solution $r(t) = t^2 - t  + 1/2$. The resulting smooth piecewise canal surface is shown in Figure \ref{fig:finalexample}.
\end{example}

\section{Conclusion and final remarks}
We have presented results that characterize the symmetries of a canal surface $\SSS$ in terms of its spine curve and radius function. These results lead to an algorithm, which is easy to implement, for computing the symmetries of a canal surface with just one spine curve, and to a complete classification of the symmetries of Dupin cyclides. Moreover, the results lead to a method for constructing symmetric canal surfaces, and to strategies to build symmetric blends between two canal surfaces, in certain cases.

In this paper, we have assumed that $\SSS$ is defined by a rational spine curve $\bfc$ and a rational radius function $r$, in which case it is well known that $\SSS$ admits a rational parametrization as well. Conversely, and according to Proposition 1.1 in \cite{VL16}, if $\SSS$ is rational then $\bfc$ and $r^2$ must be rational too. However, there are rational canal surfaces where $r$ is not rational, but the square root of a rational function; see Example 3.2 in \cite{VL16}. The methods of Section \ref{sec:sym} can be adapted to this situation. 

Finally, in Remark \ref{rem:Minkowski} we observed that $\SSS$ can be identified with a curve in four-dimensional Minkowski space $\RR^{3,1}$. We have not yet explored the possibility of using this setup for computing or studying the symmetries of $\SSS$. In this sense, a potential idea is to use differential invariants of curves in $\RR^{3,1}$; see for instance~\cite{simsek}.

\section*{References}

\bibliographystyle{amsxport}
\begin{biblist}

\bib{AG16}{article}{
   author={Alc{\'a}zar, Juan Gerardo},
   author={Goldman, Ron},
   title={Finding the Axis of Revolution of an Algebraic Surface of Revolution},
   journal={IEEE Transactions on Visualization and Computer Graphics},
   volume={22},
   date={2016},
   number={9},
   pages={2082--93},
}

\bib{AHM15}{article}{
   author={Alc{\'a}zar, Juan Gerardo},
   author={Hermoso, Carlos},
   title={Involutions of polynomially parametrized surfaces},
   journal={Journal of Computational and Applied
Mathematics},
   volume={294},
   date={2016},
   pages={23--38},
   issn={0377-0427},
}

\bib{Alcazar.Hermoso.Muntingh16a}{article}{
   author={Alc{\'a}zar, Juan Gerardo},
   author={Hermoso, Carlos},
   author={Muntingh, Georg},
   title={Similarity detection of rational space curves},
   note={To appear in Journal of Symbolic Computation},
   date={2016},
   eprint={http://arxiv.org/abs/1512.02620},
}

\bib{Alcazar.Hermoso.Muntingh16}{article}{
   author={Alc{\'a}zar, Juan Gerardo},
   author={Hermoso, Carlos},
   author={Muntingh, Georg},
   title={Detecting similarities of rational space curves},
	conference={
      title={Proceedings of the ACM on International Symposium on Symbolic and Algebraic Computation (2016)},},
			book={
      publisher={ACM},
  },
	pages={23--30},
  date={2016},
}

\bib{Alcazar.Hermoso.Muntingh15}{article}{
   author={Alc{\'a}zar, Juan Gerardo},
   author={Hermoso, Carlos},
   author={Muntingh, Georg},
   title={Symmetry detection of rational space curves from their curvature
   and torsion},
   journal={Comput. Aided Geom. Design},
   volume={33},
   date={2015},
   pages={51--65},
   issn={0167-8396},
}

\bib{Alcazar.Hermoso.Muntingh14}{article}{
   author={Alc{\'a}zar, Juan Gerardo},
   author={Hermoso, Carlos},
   author={Muntingh, Georg},
   title={Detecting symmetries of rational plane and space curves},
   journal={Comput. Aided Geom. Design},
   volume={31},
   date={2014},
   number={3-4},
   pages={199--209},
   issn={0167-8396},
}

\bib{Alcazar.Hermoso.Muntingh14b}{article}{
   author={Alc{\'a}zar, Juan Gerardo},
   author={Hermoso, Carlos},
   author={Muntingh, Georg},
   title={Detecting similarity of rational plane curves},
   journal={J. Comput. Appl. Math.},
   volume={269},
   date={2014},
   pages={1--13},
}

\bib{Chandru}{article}{
   author={Chandru, Vijaya},
   author={Dutta, Debasish},
   author={Hoffmann, Christoph M.},
   title={On the geometry of Dupin cyclides},
   journal={Visual Computer},
   volume={5},
   date={1989},
   pages={277--290},
   issn={0178-2789},
}

\bib{Cho1}{article}{
author={Cho, Hee Cheol},
author={Choi, Hyeong In},
author={Kwon, Song-Hwa},
author={Lee, Doo Seok},
author={Wee, Nam-Sook},
title={Clifford algebra, Lorentzian geometry
and rational parametrization of canal surfaces},
journal={Comput. Aided Geom. Design},
volume={21},
date={2004},
pages={327--339},
   issn={0167-8396},
	}
	
\bib{Cho2}{article}{
  author={Choi, Hyeong In},
  author={Lee, Doo Seok},
  title={Rational parametrization of canal surface by 4 dimensional Minkowski
Pythagorean hodograph curves.},
  conference={
      title={Proceedings of the Geometric Modeling and Processing 2000,
(GMP 00)},
  },
  book={
      publisher={IEEE Computer Society},
  },
  pages={301--309},
  date={2000},
}

\bib{Coxeter69}{book}{
   author={Coxeter, Harold S.M.},
   title={Introduction to geometry},
   edition={2},
   publisher={John Wiley \& Sons, Inc., New York-London-Sydney},
   date={1969},
   pages={xviii+469},
}

\bib{Dahl14}{thesis}{
   author={Dahl, Heidi E.I.},
   title={Improved blends between primitive surfaces},
   type={PhD thesis},
   date={2014},
   eprint={http://urn.nb.no/URN:NBN:no-48202}
}

\bib{Dahl2014}{inproceedings}{
    author = {Dahl, Heidi E.I.},
    booktitle = {Mathematical Methods for Curves and Surfaces},
    editor = {Floater, Michael and Lyche, Tom and Mazure, Marie-Laurence and M{\o}rken, Knut and Schumaker, Larry L.},
    pages = {88--111},
    volume = {8177},
    publisher = {Springer Berlin Heidelberg},
    series = {Lecture Notes in Computer Science},
    title = {{Piecewise rational parametrizations of canal surfaces}},
    year = {2014}
}

\bib{Degen02}{article}{
   author={Degen, Wendelin},
   title={Cyclides},
   book={
      title={Handbook of Computer Aided Geometric Design},
      publisher={North-Holland, Amsterdam},
   },
   date={2002},
   pages={575--601},
}

\bib{Dupin}{book}{
   author={Dupin, Charles},
   title={Application de g\'eom\'etrie et de m\'echanique},
   publisher={Bachelier, Paris (France)},
   date={1822},
}

\bib{Dutta}{article}{
author={Dutta Debasish},
author={Martin Ralph R.}, 
author={Pratt Michael J.},
title={Cyclides in surface and solid modeling},
journal={IEEE Computer Graphics and Applications},
volume={13},
date={1993},
number={1},
pages={53--59},
issn={0272-1716}
}

\bib{Hilbert.Cohn-Vossen52}{book}{
   author={Hilbert, David},
   author={Cohn-Vossen, Stephan},
   title={Geometry and the imagination},
   note={Translated by P. Nem\'enyi},
   publisher={Chelsea Publishing Company, New York, N. Y.},
   date={1952},
   pages={ix+357},
}

\bib{Jia}{article}{
author={Jia Xiaohong},
title={Role of moving planes and moving spheres following Dupin cyclides},
journal={Comput. Aided Geom. Design},
volume={31},
date={2014},
pages={168--181},
issn={0167-8396},
}

\bib{Krasauskas07}{article}{
   author={Krasauskas, Rimvydas},
   title={Minimal rational parametrizations of canal surfaces},
   journal={Computing},
   volume={79},
   date={2007},
   number={2-4},
   pages={281--290},
   issn={0010-485X},
	}
	
\bib{Maxwell}{article}{
   author={Maxwell, James Clerk},
   title={On the Cyclide},
   journal={The Quarterly Journal of Pure and Applied Mathematics},
   volume={34},
   date={1867},
}

\bib{WebsiteGeorg}{article}{
   author={Muntingh, Georg},
   title={Personal Website},
   eprint={https://sites.google.com/site/georgmuntingh/academics/software}
}

\bib{Peternell.Pottman97}{article}{
   author={Peternell, Martin},
   author={Pottmann, Helmut},
   title={Computing rational parametrizations of canal surfaces},
   note={Parametric algebraic curves and applications (Albuquerque, NM,
   1995)},
   journal={J. Symbolic Comput.},
   volume={23},
   date={1997},
   number={2-3},
   pages={255--266},
   issn={0747-7171},
}

\bib{Rotman95}{book}{
   author={Rotman, Joseph J.},
   title={An introduction to the theory of groups},
   series={Graduate Texts in Mathematics},
   volume={148},
   edition={4},
   publisher={Springer-Verlag, New York},
   date={1995},
   pages={xvi+513},
   isbn={0-387-94285-8},
}

\bib{SanchezReyes}{article}{
  author={S\'anchez-Reyes J.},
  title={Detecting symmetries in polynomial B\'ezier curves},
  journal = {Journal of Computational and Applied
Mathematics},
  volume = {288},
  pages = {274--283},
  date = {2015},
}

\bib{Sendra.Winkler.Perez-Diaz}{book}{
   author={Sendra, J. Rafael},
   author={Winkler, Franz},
   author={P{\'e}rez-D{\'{\i}}az, Sonia},
   title={Rational algebraic curves},
   series={Algorithms and Computation in Mathematics},
   volume={22},
   publisher={Springer, Berlin},
   date={2008},
   pages={x+267},
   isbn={978-3-540-73724-7},
}

\bib{simsek}{article}{
author={Sim\c{s}ek H.},
author={\"Ozdemir M.},
title={Similar and self-similar curves in Minkowski n-space},
journal={Bulletin of the Korean Mathematical Society},
volume={52},
date={2015},
number={6},
pages={2071--2093},
issn={1015-8634},
}

\bib{VL15}{article}{
   author={Vr\u{s}ek, Jan},
   author={L\'avi\u{c}ka, Miroslav},
   title={Determining surfaces of revolution from their implicit equations},
   journal={Journal of Computational and Applied Mathematics},
   volume={290},
   date={2015},
   number={2-4},
   pages={125--135},
   issn={0377-0427},
}
		
\bib{VL16}{article}{
   author={Vr\u{s}ek, Jan},
   author={L\'avi\u{c}ka, Miroslav},
   title={Recognizing implicitly given rational canal surfaces},
   journal={Journal of Symbolic Computation},
   volume={74},
   date={2016},
   pages={367--377},
}	
	
\end{biblist}
\end{document}